\numberwithin{equation}{section}
\theoremstyle{plain}
\newtheorem{thm}{Theorem}
\newtheorem{lemma}{Lemma}[section]
\newtheorem{prop}[lemma]{Proposition}
\newtheorem{cor}[lemma]{Corollary}
\theoremstyle{definition}
\theoremstyle{remark}
\newtheorem{remark}{Remark}[section]
\def\Im{\mathop{\rm Im}\nolimits}
\newcommand{\s}{\text{\rm{s}}}
\DeclareMathOperator*{\supp}{supp}
\DeclareMathOperator*{\arccosh}{arccosh}
\title[Eventually monotone coefficients]{Spectral edge behavior for eventually monotone Jacobi and Verblunsky coefficients}
\author{Milivoje Lukic}
\address{Rice University, 6100 Main Street, Mathematics MS 136, Houston, TX 77005}
\date\today
\email{milivoje.lukic@rice.edu}
\thanks{The author was partially supported by NSF Grant DMS-1301582}
\keywords{Jacobi matrix, CMV matrix, spectral density, monotone coefficients, polynomially decaying coefficients}
\subjclass[2010]{47B36,42C05,39A70}
\begin{document}

\begin{abstract}
We consider Jacobi matrices with eventually increasing sequences of diagonal and off-diagonal Jacobi parameters. We describe the asymptotic behavior of the subordinate solution at the top of the essential spectrum, and the asymptotic behavior of the spectral density at the top of the essential spectrum.

In particular, allowing on both diagonal and off-diagonal Jacobi parameters perturbations of the free case of the form $- \sum_{j=1}^J c_j n^{-\tau_j} + o(n^{-\tau_1-1})$ with $0 < \tau_1 < \tau_2 < \dots < \tau_J$ and $c_1>0$, we find the asymptotic behavior of the $\log$ of spectral density to order $O(\log(2-x))$ as $x$ approaches $2$.

Apart from its intrinsic interest, the above results also allow us to describe the asymptotics of the spectral density for orthogonal polynomials on the unit circle with real-valued Verblunsky coefficients of the same form.
\end{abstract}

\maketitle

\section{Introduction}

Given a compactly supported nontrivial probability measure $\mu$ on $\mathbb{R}$ (we follow standard usage in using nontrivial to mean not supported on a finite set of points), orthonormal polynomials $p_n(x)$, $n=0,1,2,\dots$ are obtained by applying the Gram--Schmidt process to $1, x, x^2, \dots$ so that
\[
\int p_m(x) p_n(x) d\mu(x) = \delta_{m,n}.
\]
The polynomials obey the relation
\[
x p_n(x) = a_{n} p_{n-1}(x) + b_{n+1} p_n(x) + a_{n+1} p_{n+1}(x)
\]
for some bounded sequences of coefficients $a_n > 0$ and $b_n \in \mathbb{R}$, which gives the classical correspondence between the measure $\mu$ and its Jacobi coefficients $\{a_n, b_n\}_{n=1}^\infty$ \cite{Teschl,SimonRice}. Conversely, this correspondence can be realized by observing the half-line Jacobi matrix which acts on $\ell^2(\mathbb{N})$ by
\[
(Ju)_n = \begin{cases}
a_{n-1} u_{n-1} + b_n u_n + a_n u_{n+1}  & n \ge 2 \\
b_1 u_1 + a_1 u_2 & n=1
\end{cases}
\]
whose spectral measure with respect to $\delta_1$ is $\mu$.

In spectral theory, the free Jacobi matrix refers to the choice of coefficients $a_n \equiv 1$, $b_n \equiv 0$, which corresponds to the measure
\begin{equation}\label{freemeasure}
\frac 1{2\pi} \chi_{(-2,2)}(x) \sqrt{4-x^2}\,dx
\end{equation}
supported on $[-2,2]$. There is a vast literature on decaying perturbations of the free case, relating decay properties of the perturbation to spectral properties of the measure \cite{DenisovKiselevreview,Killipreview}. For instance, the main focus of this paper will be on Jacobi parameters such that
\begin{equation}\label{anbnincrease}
b_n \le b_{n+1}, \qquad a_n \le a_{n+1}, \qquad\text{for }n\ge N_0
\end{equation}
and
\begin{equation}\label{anbnlimit}
\lim_{n\to\infty} b_n = 0, \qquad \lim_{n\to\infty} a_n = 1.
\end{equation}
Such sequences $(a_n)_{n=1}^\infty$, $(b_n)_{n=1}^\infty$ have bounded variation so,  by Weidmann's theorem \cite{Weidmann67,MateNevai83}, the corresponding measures are of the form
\begin{equation}\label{measuremu}
d\mu = f(x) dx + d\mu_\s
\end{equation}
with $f(x)$ continuous and strictly positive on $(-2,2)$, $f=0$ on $\mathbb{R} \setminus (-2,2)$, and $d\mu_\s$ supported on $\mathbb{R} \setminus (-2,2)$.

While such general results are available that imply continuity and strict positivity of $f$ on $(-2,2)$, the asymptotic behavior of $f(x)$ as $x \to \pm 2$ is more sophisticated. Note that a change in asymptotics can be obtained by a compactly supported perturbation; e.g., compare the free case to the measure
\begin{equation}\label{chebyshev}
\chi_{(-2,2)}(x) \frac 1{\pi \sqrt{4-x^2}}\,dx
\end{equation}
which only differs from it in the value of the Jacobi coefficient $a_1 = \sqrt 2$ but has different asymptotic behavior as $x \to \pm 2$.

Beyond the intrinsic interest in the asymptotic behavior of the spectral density for decaying perturbations, this has emerged as a tool for proving higher-order Szeg\H o theorems of arbitrarily high order \cite{Lukic16}. A looser comparison can be drawn with results about density of states for ergodic operators, such as the interpretation for regular measures of the density of states as an equilibrium measure \cite{Simonequilibrium} (which, for sufficiently nice spectra, implies square-root behavior at spectral edges), or the phenomenon of Lifshitz tails for random operators \cite{Kirsch08}, which describes the rapid decay of the density of states at a spectral edge.

While our main interest is in the behavior of spectral density as $x \to 2$ (which corresponds to a double limit, $n\to\infty$ followed by $x\to 2$), the analysis also requires a discussion of the eigensolutions at $x=2$ as $n\to \infty$. Recall first that a (formal) eigensolution at $x$ is any sequence $u=(u_n)_{n=1}^\infty$ which solves for all $n\ge 2$ the recurrence relation
\begin{equation}\label{eigenx}
a_{n-1} u_{n-1} + b_n u_n + a_n u_{n+1} = x u_n.
\end{equation}
Following Gilbert--Pearson \cite{GilbertPearson87}, a nontrivial eigensolution $v = (v_n)_{n=1}^\infty$ is called subordinate if for any eigensolution $u$ which is not a multiple of $v$,
\[
\lim_{N \to \infty} \frac{ \sum_{n=1}^N \lvert v_n\rvert^2} {\sum_{n=1}^N \lvert u_n \rvert^2} =0.
\]
It is clear that if a subordinate eigensolution exists, subordinate eigensolutions (together with the trivial eigensolution) form a one-dimensional subspace of the two-dimensional space of eigensolutions.

Our first theorem concerns the existence of the subordinate solution at $x=2$ and its asymptotic behavior as $n \to \infty$.

\begin{thm}\label{thm1}
\begin{enumerate}[(a)]
\item If there exists $N_0$ such that $a_n \le 1$ and $b_n \le 0$ for all $n\ge N_0$, then there exists a subordinate solution $v = (v_n)_{n=1}^\infty$ at $x=2$, and $v \in \ell^\infty(\mathbb{N})$.

\item If, moreover, \eqref{anbnincrease} and \eqref{anbnlimit} hold, then there exist constants $C_1, C_2 \in (0,\infty)$ such that for all $n \ge N_0$,
\begin{equation}\label{decayingsolnasymptotics}
C_1 n^{-1} e^{-\sum_{j=N_0}^n \Gamma_j} \le v_n \le C_2 n e^{-\sum_{j=N_0}^n \Gamma_j},
\end{equation}
where $\Gamma_n$ is defined for all $n \ge N_0$ by
\[
\Gamma_n  = \arccosh \frac{2-b_n}{2a_n}.
\]
\end{enumerate}
\end{thm}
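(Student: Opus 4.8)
The plan is to prove part (a) first, by a truncation argument followed by a subordinacy check, and then to prove part (b) by conjugating away the expected exponential factor $e^{-\sum\Gamma_j}$ and analyzing the resulting near-degenerate scalar recursion.

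\emph{Part (a).} Since $a_n\le 1$ and $b_n\le 0$ give $\frac{2-b_n}{2a_n}\ge 1$, the quantity $\Gamma_n$ is well defined and $\ge 0$ for $n\ge N_0$. For each $N>N_0$ I let $v^{(N)}$ be the eigensolution at $x=2$ determined by $v^{(N)}_{N+1}=0$, normalized by $v^{(N)}_{N_0}=1$, and computed by downward recursion $a_{n-1}v^{(N)}_{n-1}=(2-b_n)v^{(N)}_n-a_nv^{(N)}_{n+1}$. A downward induction on $n$ using $a_n\le 1$, $b_n\le 0$ shows that $v^{(N)}_n>0$ and $v^{(N)}_{n-1}\ge v^{(N)}_n$ for $N_0\le n\le N$: indeed if $v^{(N)}_{n+1}\le v^{(N)}_n$ then $(2-b_n)v^{(N)}_n-a_nv^{(N)}_{n+1}\ge(2-b_n-a_n)v^{(N)}_n\ge(1-b_n)v^{(N)}_n\ge a_{n-1}v^{(N)}_n$, giving $v^{(N)}_{n-1}\ge v^{(N)}_n>0$. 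Hence $0\le v^{(N)}_n\le 1$; passing to a subsequence along which $v^{(N)}_{N_0+1}$ converges and propagating forward via the recurrence, I obtain a limit $v=(v_n)$ that solves \eqref{eigenx} at $x=2$, is positive and nonincreasing for $n\ge N_0$ (so $v\in\ell^\infty(\mathbb{N})$), and is nontrivial since $v_{N_0}=1$; positivity and nonvanishing propagate by the recurrence to all of $\mathbb{N}$.

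\emph{Subordinacy of $v$.} Reduction of order gives a linearly independent solution $u_n=v_n S_n$ with $S_n=\sum_{k=N_0}^{n-1}(a_kv_kv_{k+1})^{-1}$, which one checks via the constant Wronskian $a_n(u_nv_{n+1}-u_{n+1}v_n)\equiv-1$. Because $v$ is positive and nonincreasing, $S_n\ge n-N_0\to\infty$, and keeping only the top term $u_n\ge(a_{n-1}v_{n-1})^{-1}\ge(v_{N_0})^{-1}$. It now suffices to show $\sum_{n\le L}v_n^2=o\bigl(\sum_{n\le L}u_n^2\bigr)$, since then any solution $w=\gamma u+\delta v$ with $\gamma\ne 0$ satisfies $\|w\|_L^2=\gamma^2\|u\|_L^2(1+o(1))\gg\|v\|_L^2$. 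Given $\epsilon>0$ choose $M$ with $S_n^2\ge\epsilon^{-1}$ for $n\ge M$; then $\sum_{n\le L}u_n^2\ge\epsilon^{-1}\bigl(\sum_{n\le L}v_n^2-\sum_{n<M}v_n^2\bigr)$. If $\sum_n v_n^2=\infty$ this forces $\limsup_L\frac{\sum_{n\le L}v_n^2}{\sum_{n\le L}u_n^2}\le\epsilon$; if $\sum_n v_n^2<\infty$ then $\sum_{n\le L}u_n^2\ge(L-N_0)v_{N_0}^{-2}\to\infty$ makes the ratio tend to $0$. Either way the ratio tends to $0$, proving $v$ subordinate.

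\emph{Part (b).} Assume also \eqref{anbnincrease} and \eqref{anbnlimit}. Then $\cosh\Gamma_n=\frac{2-b_n}{2a_n}$ is nonincreasing to $1$, so $\Gamma_n\searrow 0$; moreover $(\Gamma_n)$ and $(a_n)$ have summable increments, $\sum_{n>N_0}(\Gamma_{n-1}-\Gamma_n)=\Gamma_{N_0}$ and $\sum_{n>N_0}(a_n-a_{n-1})=1-a_{N_0}$, by telescoping. Put $\alpha_n=a_ne^{-\Gamma_n}$, $\beta_n=a_ne^{\Gamma_n}$, so $\alpha_n+\beta_n=2-b_n$, $\alpha_n\beta_n=a_n^2$, $\alpha_n$ is nondecreasing to $1$ with $\sum(\alpha_n-\alpha_{n-1})<\infty$, and $(\beta_n)$ has bounded variation. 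With $\sigma_n=\sum_{j=N_0}^{n-1}\Gamma_j$ the substitution $v_n=e^{-\sigma_n}z_n$ turns \eqref{eigenx} at $x=2$ into $\alpha_n(z_{n+1}-z_n)=\beta_nz_n-\beta_{n-1}z_{n-1}$, with $z_n>0$. Summing this from $N_0+1$ to $m$ (the right side telescopes to $\beta_mz_m-\beta_{N_0}z_{N_0}$) and then summing by parts on the left gives the scalar identity $z_{m+1}-z_m=(e^{2\Gamma_m}-1)z_m+\alpha_m^{-1}Z_m$, where $Z_m=K_0+\sum_{n=N_0+2}^m(\alpha_n-\alpha_{n-1})z_n$ is \emph{nondecreasing} in $m$ and $K_0$ is an explicit constant. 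Since \eqref{decayingsolnasymptotics} is an asymptotic two-sided bound, I may replace $N_0$ by any larger index, so $\Gamma_{N_0}$, $\sup_n|e^{2\Gamma_n}-1|$ and $\sum_{n>N_0}(\alpha_n-\alpha_{n-1})$ may be taken arbitrarily small.

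\emph{The core estimate and the main obstacle.} It remains to deduce $C_1 m^{-1}\le z_m\le C_2 m$ from this scalar identity together with $z_m>0$, the monotonicity of $Z_m$, and summability of $\alpha_n-\alpha_{n-1}$; the equation is a discrete analogue of a parabolic ($\lambda=1$ double-root) problem, whose solutions should lie between an order-$m^{-1}$ subsolution and an order-$m$ supersolution. I would (i) bootstrap the at-most-linear upper bound $z_m\le C_2m$ by feeding a running maximum of $z$ back into $Z_m$ and controlling $\sum_m(e^{2\Gamma_m}-1)z_m$ and $\sum_m|Z_m|/\alpha_m$, and (ii) prove the lower bound $z_m\ge C_1m^{-1}$ by a dual argument, exploiting positivity of $z$ to force $Z_m\to 0$ and comparing $v$ with its reduction-of-order partner $u$ via the constant Wronskian. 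The hard part is exactly (ii) together with making the bootstrap in (i) non-circular: the delicate direction is ruling out that $v_n$ decays faster than $n^{-1}e^{-\sigma_n}$, and it is here that the full strength of the monotonicity hypothesis \eqref{anbnincrease} — not merely bounded variation of $a_n,b_n$ — must enter, to prevent $Z_m$ from drifting and to keep $z_m$ from collapsing. A cleaner alternative to (i)–(ii) is to guess explicit super/sub-solutions of the form $c\,n^{\pm1}e^{-\sigma_n}$, possibly corrected by a term built from the summable quantities $\alpha_n-\alpha_{n-1}$ and $\Gamma_{n-1}-\Gamma_n$, and to invoke a comparison principle for the second-order recurrence; verifying the super/sub-solution inequalities then becomes the crux.
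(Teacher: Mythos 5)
Part (a) of your argument is sound: the truncated solutions $v^{(N)}$ with $v^{(N)}_{N+1}=0$, the downward induction giving positivity and monotonicity, the compactness limit, and the reduction-of-order partner $u_n=v_nS_n$ with $S_n\to\infty$ together give a correct proof of existence, boundedness and subordinacy. (The paper instead obtains $v$ by a shooting argument on the initial slope $t=u_{N_0+1}$ and shows directly that every non-multiple of $v$ grows at least linearly; your route is different but equally valid, and in fact mirrors the paper's own construction of $v_n(x)$ for $x<2$ in its Proposition on the family $v(x)$.)

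Part (b), however, is not proved. After the substitution $v_n=e^{-\sigma_n}z_n$ you correctly derive the identity $z_{m+1}-z_m=(e^{2\Gamma_m}-1)z_m+\alpha_m^{-1}Z_m$ with $Z_m$ nondecreasing, but the two-sided bound $C_1m^{-1}\le z_m\le C_2m$ is exactly the content of the theorem, and your items (i) and (ii) are a plan, not an argument: you yourself flag that the lower bound (ruling out faster-than-$n^{-1}e^{-\sigma_n}$ decay) is ``the crux'' and leave it open. The difficulty is real --- the sign structure of your identity controls $z$ from above once $Z_m\le 0$, or forces linear growth once $Z_m>0$, but neither branch yields the lower bound $z_m\ge C_1m^{-1}$ without further input. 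The paper avoids working with the decaying solution directly: it first proves the two-sided estimate $e^{\sum_{j=N_0}^{n}\Gamma_j}\le u_{n+1}\le (n-N_0+2)\,e^{\sum_{j=N_0}^{n}\Gamma_j}$ for the \emph{growing} solution with data $u_{N_0-1}=0$, $u_{N_0}=1$, by introducing $\Phi_n=e^{-\sum_{j=N_0}^{n-1}\Gamma_j}u_n$ and $\phi_n=\Phi_n-\tfrac{a_{n-1}}{a_n}e^{-\Gamma_n-\Gamma_{n-1}}\Phi_{n-1}$ and showing (using that $a_ne^{-\Gamma_n}$ increases while $a_ne^{\Gamma_n}$ decreases, which is where \eqref{anbnincrease} enters) that $\Phi_n$ is nondecreasing and $\phi_n\le 1$, whence $1\le\Phi_n\le n-N_0+1$. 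The bounds on $v$ then follow from the constant Wronskian $W=a_n(u_{n+1}v_n-u_nv_{n+1})>0$: the inequality $a_nu_{n+1}v_n\ge W$ gives the lower bound in \eqref{decayingsolnasymptotics}, and telescoping $u_{n+1}v_n-u_nv_{n-1}\le W/a_n$ (using monotonicity of $v$) gives $u_{n+1}v_n\le\sum_{j\le n}W/a_j=O(n)$ and hence the upper bound. If you want to salvage your approach, the most economical fix is to run your $z$-substitution on the growing solution $u$ (where the telescoped quantity has a definite sign and is bounded by its initial value) and then import the Wronskian step; as written, the proof of (b) is missing.
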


Our second theorem considers the asymptotic behavior of the spectral density $f(x)$ as $x \nearrow 2$. Note that $(p_{n-1}(x))_{n=1}^\infty$ is a solution of \eqref{eigenx}, sometimes described as the Dirichlet solution; our next theorem will assume that the Dirichlet solution at $x=2$ is not a subordinate solution. To motivate the relevance of this condition, note that in the two examples \eqref{freemeasure}, \eqref{chebyshev} considered above, eigensolutions at $x=2$ are for $n\ge 2$ linear sequences $u_n = A + B n$, and solutions with $B=0$, $A\neq 0$ are subordinate. An exact calculation for small $n$ shows that while for the free case \eqref{freemeasure} $p_{n-1}(2) = n$, for the measure \eqref{chebyshev} $p_{n-1}(2) = \sqrt{2}$ for $n\ge 2$, a subordinate solution.

In preparation for our next theorem, let us note that when \eqref{anbnincrease}, \eqref{anbnlimit} hold and $x \in [b_{N_0}+2a_{N_0},2)$, we can define 
\begin{equation}\label{Nx}
N(x) = \max \{ n \in \mathbb{N} \mid b_n + 2 a_n \le x \}
\end{equation}
As long as $b_{N_0}+2a_{N_0}$ is strictly smaller than $2$ (which holds if the sequences $a_n, b_n$ are not both eventually constant), the set is nonempty and bounded for $x \in [b_{N_0}+2a_{N_0},2)$ so $N(x)$ is an integer-valued function there. For $x$ in this interval and for  $N_0 \le n \le N(x)$, we also define
\begin{equation}\label{gammanxdef}
\gamma_n(x) = \arccosh \frac{x-b_n}{2a_n}.
\end{equation}
\begin{thm}\label{thm2}
Let $a_n>0$, $b_n \in \mathbb{R}$ be Jacobi parameters for the measure \eqref{measuremu} and assume that \eqref{anbnincrease}, \eqref{anbnlimit} hold and that the sequences $a_n, b_n$ are not both eventually constant. If $(p_{n-1}(2))_{n=1}^\infty$ is not a subordinate solution at $x=2$, then
\begin{equation}\label{eq18}
\left \lvert \log f(x) + 2 \sum_{n=N_0}^{N(x)} \gamma_n(x) \right\rvert \le 2 h(x) + O(1), \qquad x\nearrow 2
\end{equation}
where
\begin{equation}\label{eq19}
e^{h(x)} = N(x) (b_{N(x)+2} - b_{N(x)+1} + a_{N(x)+2} - a_{N(x)+1})^{-1} (2-x)^{1/2}.
\end{equation}
\end{thm}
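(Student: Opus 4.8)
The plan is to reduce the two-sided estimate \eqref{eq18} to a quantitative relationship between the spectral density $f(x)$ and the behavior, at energy $x<2$, of the transfer matrices up to step $N(x)$, and then to control those transfer matrices using the WKB-type ``exponential growth/decay'' regime they exhibit for $n\le N(x)$ (where $x-b_n \ge 2a_n$, so the diagonal-free transfer matrix at $x$ has eigenvalues $e^{\pm\gamma_n(x)}$) against the ``oscillatory'' regime for $n>N(x)$. The hypothesis that $(p_{n-1}(2))$ is \emph{not} subordinate at $x=2$ is what prevents the density from being anomalously small; by Theorem~\ref{thm1}(b) the subordinate solution $v$ at $x=2$ decays like $n^{\mp 1}e^{-\sum\Gamma_j}$, and since $\gamma_n(x)\to\Gamma_n$ as $x\to 2$ the sums $\sum_{n=N_0}^{N(x)}\gamma_n(x)$ are the $x$-dependent analogue of this decay rate; the whole theorem says $\log f(x)$ tracks $-2\sum\gamma_n(x)$ up to the error $h(x)$ measured in \eqref{eq19}.

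**Key steps.** First I would record the standard formula expressing $f(x)$, for a.e.\ $x\in(-2,2)$, in terms of boundary values of the $m$-function, or equivalently via the Jitomirskaya--Last inequalities, $f(x)^{-1}\asymp \|(p_{n}(x))\|_{\ell^2(L(x))}\,\|(q_n(x))\|_{\ell^2(L(x))}$ where $q_n$ is the second-kind solution and $L(x)$ is the Jitomirskaya--Last length scale at which the solutions reach unit norm; the point of the proof is to show that $L(x)\approx N(x)$ up to the subexponential corrections captured by $h(x)$, and that on $[1,N(x)]$ both $p$ and a suitable companion solution grow/decay like $e^{\pm\sum_{n}\gamma_n(x)}$. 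Second, I would set up a discrete Prüfer/variation-of-parameters transformation on the interval $N_0\le n\le N(x)$ diagonalizing at each step the single-step transfer matrix into its $e^{\pm\gamma_n(x)}$ eigenbasis, so that the true solution differs from the product $\prod e^{\pm\gamma_n(x)}$ by a matrix product $\prod(I+E_n)$ with $\sum\|E_n\|$ controlled; here monotonicity \eqref{anbnincrease} is essential because $\gamma_n(x)$ is then monotone in $n$, making $\sum|\gamma_{n+1}(x)-\gamma_n(x)|$ telescoping and hence the error sum bounded by the single-step jump at the turning point, which is exactly the denominator $(b_{N(x)+2}-b_{N(x)+1}+a_{N(x)+2}-a_{N(x)+1})^{-1}$ appearing in \eqref{eq19}. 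Third, for $n>N(x)$ I would show the solution is in the elliptic/oscillatory regime all the way to $n=L(x)$ and estimate $L(x)-N(x)$ and the accumulated norm there, extracting the polynomial factor $N(x)$ and the $(2-x)^{1/2}$ factor in \eqref{eq19} from the near-turning-point analysis (an Airy-type transition, handled here crudely since we only need the bound up to $e^{\pm h(x)}$). Finally, combining: the decaying solution on $[1,N(x)]$ contributes $e^{-\sum\gamma_n}$, the growing one $e^{+\sum\gamma_n}$, their product into the Jitomirskaya--Last formula gives $\log f(x) = -2\sum_{n=N_0}^{N(x)}\gamma_n(x) + (\text{error bounded by }2h(x)+O(1))$, where the non-subordinacy of the Dirichlet solution at $x=2$ guarantees that $p_n(x)$ has a nonvanishing component along the \emph{growing} eigendirection, so that no extra large factor from a near-cancellation appears in the lower bound for $f(x)$.

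**Main obstacle.** The crux — and the place where real work is needed rather than bookkeeping — is the turning-point analysis near $n=N(x)$, i.e.\ controlling the transition between the hyperbolic regime ($n\le N(x)$) and the elliptic regime ($n>N(x)$) tightly enough to produce the exact exponent in \eqref{eq19} with only an $O(1)$ slack. Away from the turning point the diagonalization errors are summable and telescoping thanks to monotonicity, so they cost only $O(1)$; but within an $O(1)$-sized window of $n=N(x)$ the eigenvalues $e^{\pm\gamma_n(x)}$ collide ($\gamma_{N(x)}(x)\to 0$ and $\gamma_n(x)$ would be imaginary for $n>N(x)$), the diagonalizing change of basis becomes singular, and one must instead compare with a model (essentially an Airy equation after rescaling) whose solution's size is governed by the local gap $b_{N(x)+2}-b_{N(x)+1}+a_{N(x)+2}-a_{N(x)+1}$ and the distance $2-x$. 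A further technical point requiring care is establishing that $N_0\le N(x)<\infty$ behaves regularly as $x\nearrow 2$ and that $N(x)\to\infty$ with $\sum_{n=N_0}^{N(x)}\gamma_n(x)\to\infty$, so that the leading term genuinely dominates; this uses \eqref{anbnlimit} and the fact that $\Gamma_n\to 0$ but is not summable unless the coefficients are eventually constant, which is excluded by hypothesis.
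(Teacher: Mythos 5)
Your overall architecture --- a hyperbolic regime for $n\le N(x)$, an oscillatory regime for $n>N(x)$, and non-subordinacy of the Dirichlet solution at $x=2$ forcing a nonzero component along the growing solution so that the lower bound on $f$ survives --- matches the paper's. But the mechanism you propose for the central quantitative step has a genuine gap. You claim that diagonalizing each one-step transfer matrix into its $e^{\pm\gamma_n(x)}$ eigenbasis produces an error $\prod(I+E_n)$ with $\sum\lVert E_n\rVert=O(1)$ by telescoping monotonicity, leaving only an ``$O(1)$-sized window'' near $n=N(x)$ for an Airy-type comparison. This is not so: the diagonalizing change of basis has condition number of order $\gamma_n(x)^{-1}$, so the per-step error is of order $(\gamma_n-\gamma_{n+1})/\gamma_{n+1}$ rather than $\gamma_n-\gamma_{n+1}$; summed over the hyperbolic regime this is of order $\log\bigl(\gamma_{N_0}/\gamma_{N(x)}\bigr)$, which diverges, and the region where the basis degenerates is not of bounded width. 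Executed carefully, this route yields a multiplicative loss $N(x)^{C}$ with an unspecified $C$, which does not prove \eqref{eq19}, where the power of $N(x)$ is exactly one. The paper avoids diagonalization entirely: it sets $\Phi_n=e^{-\sum_{j=N_0}^{n-1}\gamma_j(x)}u_n$ for the solution with $u_{N_0-1}=0$, $u_{N_0}=1$, and a monotonicity argument (Lemma~\ref{lemmaincreasedecrease} and Prop.~\ref{prop24}) gives the exact two-sided bound $1\le\Phi_n\le n-N_0+1$; this, fed through the $\alpha u+\beta v$ decomposition of Prop.~\ref{prop29}, is the sole source of the factor $N(x)$ in \eqref{eq19}. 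No Airy analysis is performed: the turning point is crossed by two applications of uniformly bounded transfer matrices, and the factor $(b_{N+2}-b_{N+1}+a_{N+2}-a_{N+1})^{-1}(2-x)^{1/2}$ comes entirely from the oscillatory side, namely the lower bound $\kappa_{N+2}^2\gtrsim b_{N+2}-b_{N+1}+a_{N+2}-a_{N+1}$ of Lemma~\ref{lemma25} together with $\kappa_\infty^2\sim 2-x$ in the product estimate for the Pr\"ufer-type quantities $Y_n$ (Prop.~\ref{prop212}). Your attribution of both factors in \eqref{eq19} to a turning-point model is therefore misplaced, and the Airy step you flag as the crux is left entirely unexecuted.

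Two further points. The paper converts solution bounds into bounds on $f$ via \cite[Corollary 1.3]{KreimerLastSimon09} (a consequence of Carmona's formula): a uniform two-sided bound on $p_{n-1}^2+p_n^2$ over all $n$ directly controls $\log f$ up to the same factor, with no need to identify a Jitomirskaya--Last length scale $L(x)$ or to control the second-kind solution $q$; your route through Jitomirskaya--Last would require both and is strictly more work. Also, your claim that $\sum_n\Gamma_n=\infty$ whenever the coefficients are not eventually constant is false (consider $b_n=-e^{-n}$); the paper must and does treat the convergent case separately in Prop.~\ref{prop29}, where boundedness of $g$ lets \eqref{estimatesq1} follow from the cruder bound \eqref{estimatesq9} without invoking non-subordinacy. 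Finally, to use non-subordinacy at $x=2$ for nearby $x<2$ one needs the coefficients in $p_{n-1}=\alpha(x)u_n+\beta(x)v_n$ to be continuous at $x=2$, which requires constructing a family of monotone solutions $v_n(x)$ for $x<2$ converging to the subordinate solution (Prop.~\ref{propsubordinatex}); this is where eventual (rather than global) monotonicity bites and it is absent from your sketch.
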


Monotone Jacobi parameters were previously considered by Kreimer--Last--Simon \cite{KreimerLastSimon09}; our Theorem~\ref{thm2} generalizes their results in two ways. \cite{KreimerLastSimon09} considered separately two cases, $a_n \equiv 1$ and $b_n \equiv 0$, while our theorem allows both sequences to be non-constant, unifying and generalizing their arguments. \cite{KreimerLastSimon09} also assumed monotonicity from $N_0=1$, while we only assume monotonicity from some arbitrary $N_0$; while compactly supported perturbations are often easy to handle in spectral theory, in this problem having eventual monotonicity rather than monotonicity requires us to control the two-dimensional space of eigensolutions, instead of just the Dirichlet eigensolution, and introduces the complications related to subordinacy. 

\begin{remark}
\begin{enumerate}[(a)]
\item $(p_{n-1}(2))_{n=1}^\infty$ is generically not a subordinate solution; indeed, subordinacy of $(p_{n-1}(2))_{n=1}^\infty$ is a codimension $1$ condition, as it is typically disrupted by an arbitrary change of a single Jacobi coefficient.

\item If $a_n \le 1$, $b_n \le 0$ for all $n \in \mathbb{N}$, then $p_n(2) \ge n$ for all $n$ (see proof of Prop.~\ref{propeigensolutions2}), so $(p_{n-1}(2))_{n=1}^\infty$ is not subordinate. This includes the case considered in \cite{KreimerLastSimon09} and explains why a consideration of subordinacy is not needed there.

\item The condition that the Dirichlet solution at the critical point shouldn't be the subordinate solution has appeared in related problems; compare, e.g., the work of Simonov~\cite{Simonov12,Simonov16} which considers a class of Wigner--von Neumann type Schr\"odinger operators including $H = -\partial_x^2 + c x^{-\gamma} \sin(2\omega x+\delta)$ for $\gamma>1/2$ and the behavior of the spectral measure around the critical point $\omega^2$ in the interior of the essential spectrum.

\item If the perturbation decays slowly enough, it can be proved from \eqref{decayingsolnasymptotics} that the subordinate solution is $\ell^2$; when that is the case, subordinacy of $(p_{n-1}(2))_{n=1}^\infty$ is equivalent to the presence of a mass point of $\mu$ at $x=2$, i.e., to $\mu(\{2\}) \neq 0$. This will be the situation in the following theorem.
\end{enumerate}
\end{remark}

We present an application of Theorem~\ref{thm2} to sequences of Jacobi coefficients of the form
\begin{equation}\label{anansatz}
a_n = 1 - \sum_{j=1}^J c_j n^{-\tau_j} + o(n^{-1-\tau_1}) \; \forall n\ge N_0\quad\text{or}\quad a_n \equiv 1\; \forall n\ge N_0
\end{equation}
\begin{equation}\label{bnansatz}
b_n = - \sum_{j=1}^K d_j n^{-\sigma_j} + o(n^{-1-\sigma_1})\; \forall n\ge N_0\quad\text{or}\quad b_n \equiv 0\; \forall n\ge N_0
\end{equation}
with $0 < \tau_1 < \dots < \tau_J$, $0 < \sigma_1 < \dots < \sigma_K$, and $c_1, d_1 > 0$.

Pollaczek \cite{Pollaczek1,Pollaczek2,Pollaczek3} considered Jacobi parameters $a_n, b_n$ given by rational functions with numerators and denominators of the same degree (which can be rewritten in the above form, with all exponents negative integers).  Kreimer--Last--Simon \cite{KreimerLastSimon09} considered the case $a_n = 0$, $b_n = - C n^{-\beta}$ and developed some of the techniques we will use. We view \eqref{anansatz}, \eqref{bnansatz} as a natural class of polynomially decaying perturbations; it includes, for instance, linear combinations and products of sequences such as $(n+n_0)^{-\gamma}$, $\gamma >0$. Beyond the intrinsic interest, we will also see that this is crucial for a natural application to orthogonal polynomials on the unit circle. Namely, the more complicated polynomial dependence \eqref{anansatz} arises naturally when considering pure power-law decaying Verblunsky coefficients via the Szeg\H o mapping.

\begin{thm}\label{thm3}
Let $a_n$, $b_n$ be given by \eqref{anansatz}, \eqref{bnansatz}. Denote
\begin{equation}\label{betadefn}
\beta = \min(\sigma_1,\tau_1)
\end{equation}
(with the convention $\tau_1=+\infty$ if $a_n\equiv 1$ for $n\ge N_0$, and $\sigma_1=+\infty$ if $b_n \equiv 0$ for $n\ge N_0$).
\begin{enumerate}[(a)]
\item If $\beta \ge 2$, then $\log f(x) = O(\log(2-x))$, $x \nearrow 2$.
\item If $\beta < 2$ and if the measure $\mu$ does not have a mass point at $x=2$, i.e., $\mu(\{2\})=0$, then $\log f$ has an asymptotic expansion of the form
\begin{equation}\label{asymptotic1000}
\log f(x) = - \sum_{i=1}^{I} Q_i (2-x)^{-\kappa_i} + O(\log(2-x)), \qquad x \nearrow 2,
\end{equation}
with $I \in \mathbb{N}$ and $\kappa_1 > \kappa_2 > \dots > \kappa_I >0$. The leading term is given by
\begin{equation}\label{leadingcoeff}
Q_1 =  C_1^{\frac 1\beta} \frac {\Gamma\left(\frac 1\beta - \frac 12\right) \sqrt \pi } {\Gamma\left(\frac 1\beta  \right)}, \qquad \kappa_1 =  \frac 1\beta - \frac 12,
\end{equation}
where $C_1 > 0$ is such that $C_1 n^{-\beta}$ is the leading term in the expansion of $2 - 2 a_n - b_n$, i.e.
\begin{equation}\label{C1defn}
C_1 = \begin{cases}
2 c_1 & \tau_1 < \sigma_1 \\
2 c_1 + d_1  & \tau_1 = \sigma_1 \\
d_1 & \tau_1 > \sigma_1
\end{cases}
\end{equation}
\end{enumerate}
\end{thm}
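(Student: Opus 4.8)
The plan is to derive Theorem~\ref{thm3} from Theorem~\ref{thm2}, which reduces everything to the asymptotic evaluation of the sum $\Sigma(x):=\sum_{n=N_0}^{N(x)}\gamma_n(x)$ and of the error term $h(x)$. First I would record the elementary consequences of \eqref{anansatz}, \eqref{bnansatz}: since $c_1,d_1>0$, for all large $n$ one has $a_n<1$, $b_n<0$, $a_{n+1}-a_n=c_1\tau_1 n^{-\tau_1-1}(1+o(1))>0$, and $b_{n+1}-b_n=d_1\sigma_1 n^{-\sigma_1-1}(1+o(1))>0$; hence, after enlarging $N_0$, both \eqref{anbnincrease} and \eqref{anbnlimit} hold, the sequences have bounded variation (so $\mu$ has the form \eqref{measuremu} by Weidmann's theorem), and $2-2a_n-b_n=C_1 n^{-\beta}+(\text{finitely many lower-order powers of }n)+o(n^{-1-\beta})$ with $C_1$ as in \eqref{C1defn}. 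In particular $N(x)=(C_1/(2-x))^{1/\beta}(1+o(1))$; moreover the denominator in \eqref{eq19} equals $a_{N(x)+2}-a_{N(x)+1}+b_{N(x)+2}-b_{N(x)+1}\asymp N(x)^{-\beta-1}$, with positive leading coefficient because $c_1,d_1>0$, so \eqref{eq19} gives $e^{h(x)}\asymp(2-x)^{-1/2-2/\beta}$ and hence $h(x)=O(\log(2-x))$ in every case.

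For part (b) I would first verify that Theorem~\ref{thm2} applies, i.e.\ that $(p_{n-1}(2))$ is not subordinate. By the above we may take $a_n\le1$, $b_n\le0$ for $n\ge N_0$, so Theorem~\ref{thm1} provides a subordinate solution $v$ with $v_n\le C_2\,n\,e^{-\sum_{j=N_0}^n\Gamma_j}$; as $\Gamma_j\asymp j^{-\beta/2}$ and $\beta<2$, we get $\sum_{j=N_0}^n\Gamma_j\gtrsim n^{1-\beta/2}\to\infty$, so $v\in\ell^2$. If $(p_{n-1}(2))$ were subordinate it would be a nonzero multiple of $v$, hence $\ell^2$, making $x=2$ an eigenvalue of $J$ whose eigenvector has first component $p_0(2)=1\neq0$, whence $\mu(\{2\})>0$, contrary to hypothesis. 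So Theorem~\ref{thm2} applies and, using $h(x)=O(\log(2-x))$, gives $\log f(x)=-2\Sigma(x)+O(\log(2-x))$. It remains to expand $\Sigma(x)$. Since \eqref{anbnincrease} makes $n\mapsto\gamma_n(x)$ monotone, comparison with an integral gives $\Sigma(x)=\int_{N_0}^{N^*(x)}\tilde\gamma_t(x)\,dt+O(1)$, where $\tilde\gamma_t(x)=\arccosh\frac{x-\tilde b(t)}{2\tilde a(t)}$ with the interpolants $\tilde a(t)=1-\sum_j c_j t^{-\tau_j}$, $\tilde b(t)=-\sum_j d_j t^{-\sigma_j}$, $\tilde g(t)=2-2\tilde a(t)-\tilde b(t)$, and $N^*(x)$ defined by $\tilde g(N^*(x))=2-x$; the $o(n^{-1-\beta})$ errors in \eqref{anansatz}, \eqref{bnansatz} contribute only $o(1)$ here, using $\tilde\gamma_n(x)\asymp\sqrt{\tilde g(n)-(2-x)}$ and summing. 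Now substitute $t=N^*(x)\,s$: using $\tilde g(N^*(x)s)/(2-x)\to s^{-\beta}$ and $\tilde a\to1$ uniformly on compacts, and factoring $(2-x)^{1/2}$, dominated convergence (valid since $\beta<2$) gives $\int_{N_0}^{N^*(x)}\tilde\gamma_t(x)\,dt=N^*(x)(2-x)^{1/2}\bigl(\int_0^1\sqrt{s^{-\beta}-1}\,ds+o(1)\bigr)$. The substitution $s=u^{1/\beta}$ evaluates $\int_0^1\sqrt{s^{-\beta}-1}\,ds=\tfrac1\beta B\!\left(\tfrac1\beta-\tfrac12,\tfrac32\right)=\frac{\sqrt\pi\,\Gamma(1/\beta-1/2)}{2\,\Gamma(1/\beta)}$, and together with $N^*(x)=(C_1/(2-x))^{1/\beta}(1+o(1))$ this produces $2\Sigma(x)=Q_1(2-x)^{-\kappa_1}(1+o(1))+O(1)$ with $Q_1,\kappa_1$ precisely as in \eqref{leadingcoeff}. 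The full expansion \eqref{asymptotic1000} then follows by iterating: expand $\tilde g,\tilde a$ in their finitely many powers of $t$, invert $\tilde g(N^*(x))=2-x$ to expand $N^*(x)$ in powers of $2-x$, and expand $\arccosh(1+u)=\sqrt{2u}\sum_{k\ge0}\psi_k u^k$; the resulting terms are finitely many powers $(2-x)^{-\kappa_i}$ with $\kappa_1>\cdots>\kappa_I>0$, while every term with non-positive exponent (including any logarithmic term) is $O(\log(2-x))$.

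For part (a) the same scheme applies, but the Dirichlet solution need not be non-subordinate. If $(p_{n-1}(2))$ is not subordinate, Theorem~\ref{thm2} again yields $\log f(x)=-2\Sigma(x)+O(\log(2-x))$, and when $\beta\ge2$ the bound $\gamma_n(x)\le\Gamma_n\le C\,n^{-\beta/2}$ gives $\Sigma(x)=O(1)$ for $\beta>2$ and $\Sigma(x)=O(\log N(x))=O(\log(2-x))$ for $\beta=2$, so $\log f(x)=O(\log(2-x))$. If $(p_{n-1}(2))$ is subordinate, I would pass to a rank-one perturbation $\tilde J$ of $J$ at the first site for which the Dirichlet solution at $x=2$ is not subordinate --- possible since that subordinacy condition holds for at most one value of the perturbation parameter --- apply the preceding case to $\tilde J$ (note that $\Sigma$ and $h$ depend only on $(a_n,b_n)_{n\ge N_0}$ and are unchanged), and use that the $m$-functions of $J$ and $\tilde J$ are related by a fixed M\"obius transformation, so $|\log f(x)-\log\tilde f(x)|=O(\log(2-x))$ near $x=2$. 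The degenerate case $a_n\equiv1$, $b_n\equiv0$ for $n\ge N_0$ (not covered by Theorem~\ref{thm2}) is handled directly: $m$ is then a fixed rational function of the free $m$-function $m_0(x)=\tfrac12\bigl(-x+i\sqrt{4-x^2}\bigr)$, which gives $\log f(x)=\pm\tfrac12\log(2-x)+O(1)$.

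I expect the main difficulty to lie in the bookkeeping of the last step of part (b): assembling the complete finite expansion \eqref{asymptotic1000} to precision $O(\log(2-x))$, i.e.\ tracking how the finitely many subleading powers in $2-2a_n-b_n$ interact with the corrections coming from inverting for $N^*(x)$, from the Taylor expansion of $\arccosh(1+u)$, and from the sum-to-integral comparison, and confirming that everything not of the form $(2-x)^{-\kappa_i}$ with $\kappa_i>0$ is genuinely $O(\log(2-x))$. By comparison, the reductions to Theorem~\ref{thm2} and the leading-order computation are short.
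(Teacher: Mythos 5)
Your overall strategy coincides with the paper's: reduce to Theorem~\ref{thm2}, observe $h(x)=O(\log(2-x))$ from \eqref{eq19}, verify non-subordinacy in case (b) by showing the subordinate solution is $\ell^2$ (so subordinacy of the Dirichlet solution would force $\mu(\{2\})>0$), and then expand $g(x)=\sum_{n}\gamma_n(x)$ by a sum-to-integral comparison; your Beta-function evaluation of the leading term agrees exactly with \eqref{leadingcoeff}. For the full expansion \eqref{asymptotic1000} you correctly identify the ingredients but leave the assembly as a sketch; the paper's Section~3 lemmas (monotonicity of $t\mapsto(t^{-\gamma}-N^{-\gamma})/(t^{-\beta}-N^{-\beta})$, the classification of the integrals $\int_0^1(x^{-\beta}-1)^{l+1/2}\prod_j(\cdots)\,dx$ by whether $\lambda<1$, $=1$, or $>1$, and the asymptotic inversion of $N\leftrightarrow\delta_N$) are exactly that assembly, so this part is incomplete but on the right track. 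One step you treat too lightly is discarding the $o(n^{-1-\beta})$ remainders of \eqref{anansatz}, \eqref{bnansatz}: the naive bound $\lvert\sqrt{u+\eta}-\sqrt{u}\rvert\le\lvert\eta\rvert/\sqrt{u}$ degenerates near the turning point and the crude alternative $\sqrt{\lvert\eta\rvert}$ summed over all $n$ diverges for $\beta\le 1$; this can be rescued by splitting off the $O(1)$ indices where $u\lesssim\lvert\eta\rvert$, but the paper's cleaner device is to sandwich the sum between the index-shifted sums $\sum F(B_{n\pm1}-\delta A_{n\pm1}-\delta)$, using that the shift moves the argument by $\mp C\beta n^{-1-\beta}+o(n^{-1-\beta})$, which dominates the remainder, together with monotonicity of $F$.

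The genuine gap is in part (a), in the case where $(p_{n-1}(2))_{n=1}^\infty$ is subordinate. Your rank-one detour requires $\lvert\log f(x)-\log\tilde f(x)\rvert=O(\log(2-x))$, which via $\tilde f=f/\lvert 1+t\,m(x+i0)\rvert^2$ amounts to two-sided polynomial bounds on $\lvert 1+t\,m(x+i0)\rvert$ as $x\nearrow 2$. Neither bound is justified: the upper bound needs $\lvert m(x+i0)\rvert$ to grow at most polynomially near the edge (not automatic on the essential spectrum without regularity of $f$ or a mass point), and the lower bound must exclude a sequence $x_j\nearrow 2$ with $1+t\Re m(x_j)=0$, at which points $\lvert 1+t m\rvert=\lvert t\rvert\pi f(x_j)$ and the desired comparison becomes circular. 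Establishing these bounds is essentially equivalent to the eigensolution estimates you are trying to bypass. The paper avoids subordinacy here altogether: Prop.~\ref{prop29}(a) gives the two-sided bound $(a_N^2p_{N-1}^2+p_N^2)^{\pm1}\le CN^2e^{2g}$ with no assumption on the Dirichlet solution, because the coordinates $(\alpha,\beta)$ of $p$ in the basis $\{u,v\}$ satisfy $C^{-1}\le\alpha^2+\beta^2\le C$ while $\lVert T_N^{\pm1}\rVert\le CNe^{g}$; the resulting loss of a factor $e^{\pm 2g}$ relative to the non-subordinate case only yields $\log f=O(g+h)$, which is harmless precisely when $\beta\ge 2$ since then $g(x)=O(\log(2-x))$. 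You should replace the rank-one argument by this observation (or supply the missing $m$-function estimates, which amounts to the same work).
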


\begin{remark}
The proof of existence of the asymptotic expansion \eqref{asymptotic1000} is completely constructive. The constants $Q_i, \kappa_i$ can be computed explicitly in terms of the constants in \eqref{anansatz}, \eqref{bnansatz}, together with combinatorial constants coming from some Taylor expansions, and integrals of the form
\[
\int_0^1 (x^{-\beta} - 1)^{l+1/2} \prod_{j=1}^n \frac{ x^{-\gamma_j} - 1}{ x^{-\beta} - 1} dx
\]
(the leading term has $n=0$, in which case the integral reduces to a Beta function; in general the integrals reduce to a linear combination of Beta functions, possibly with mutually cancelling singularities from the summands). The resulting expressions do not lend themselves to a presentable closed form in the general case, so we don't derive them explicitly; however, they can in principle be recovered in any particular case by following the proof. Moreover, we will see that in some cases of interest (Theorems \ref{thm4} and \ref{thm6} below), it is computationally better not to follow the general method verbatim but to use expansions more tailored to the form of the perturbation.
\end{remark}

Much of the effort in the proof of Theorem~\ref{thm3} goes towards controlling the higher-order terms in \eqref{anansatz}, \eqref{bnansatz}. Lest we neglect the most important special cases, and recalling that \cite{KreimerLastSimon09} described the case $a_n\equiv 1$, $b_n = - C n^{-\beta}$, we consider the case
\begin{equation}\label{pureanansatz}
a_n = 1 - C n^{-\tau}, \qquad b_n = 0
\end{equation}
and compute explicitly the expansion to order $O(\log(2-x))$.

\begin{thm}\label{thm4}
If \eqref{pureanansatz} for some $\tau \in (0,2)$ and $C>0$, then
\begin{equation}\label{asymp1d}
\log f(x) = - \sum_{n=0}^{L-1} T_n (2-x)^{-\frac 1\tau + \frac 12 + n} + O(\log (2-x))
\end{equation}
as $x \nearrow 2$, with $L = \left\lceil \frac 1{\tau} - \frac 12 \right\rceil$ and
\begin{equation}
T_n =  2^{\frac 1\tau  - n} C^{\frac 1\tau} \frac{\Gamma\left(n+\frac 12\right)}{\Gamma\left(\frac 1\tau\right) } \sum_{l=0}^n  \sum_{m=0}^l   \frac{\Gamma\left( m+ \frac 12\right)}{2^{m} m!}  \frac{\Gamma\left( \frac 1\tau - l - \frac 12\right) }{(n-l)! }
\end{equation}
\end{thm}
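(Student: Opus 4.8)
The plan is to derive Theorem~\ref{thm4} from Theorem~\ref{thm2} by computing $2\sum_{n=N_0}^{N(x)}\gamma_n(x)$ explicitly; this is the construction behind Theorem~\ref{thm3} specialized to the ansatz \eqref{pureanansatz}, where the absence of lower-order terms in $a_n$ makes all the integrals elementary. First I would check the hypotheses of Theorem~\ref{thm2}: $b_n\equiv 0$ is constant, $a_{n+1}-a_n=C(n^{-\tau}-(n+1)^{-\tau})>0$, and $a_n\to 1$, so \eqref{anbnincrease} and \eqref{anbnlimit} hold with $N_0=1$ and the sequences are not both eventually constant; since $a_n\le 1$ and $b_n\le 0$ for all $n$, one has $p_n(2)\ge n$, so $(p_{n-1}(2))_{n=1}^\infty$ is not subordinate (cf.\ the remark following Theorem~\ref{thm2}; note that, unlike in Theorem~\ref{thm3}(b), no assumption on $\mu(\{2\})$ is needed). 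Hence \eqref{eq18} holds. Writing $\varepsilon=2-x$, one has $N(x)=\lfloor(2C/\varepsilon)^{1/\tau}\rfloor\asymp\varepsilon^{-1/\tau}$ and, by the mean value theorem, $a_{N(x)+2}-a_{N(x)+1}\asymp\tau C\,N(x)^{-\tau-1}\asymp\varepsilon^{(\tau+1)/\tau}$, so \eqref{eq19} gives $e^{h(x)}\asymp\varepsilon^{-(\tau+2)/\tau+1/2}$; thus $h(x)=O(\log(2-x))$, and by Theorem~\ref{thm2} it suffices to show that $2\sum_{n=N_0}^{N(x)}\gamma_n(x)$ equals the displayed sum in \eqref{asymp1d} up to $O(\log(2-x))$.

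Next I would pass from the sum to an integral and rescale. Fix a large constant $N_1$ (to be chosen below). The finite sum $\sum_{n=N_0}^{N_1}\gamma_n(x)$ tends to $\sum_{n=N_0}^{N_1}\arccosh(1/a_n)$ as $x\nearrow 2$ and is $O(1)$. On $[N_1,M]$, where $M:=(2C/\varepsilon)^{1/\tau}$, the function $t\mapsto\arccosh\frac{1-\varepsilon/2}{1-Ct^{-\tau}}$ is decreasing, so the usual integral comparison gives $\sum_{n=N_1+1}^{N(x)}\gamma_n(x)=\int_{N_1+1}^{N(x)}\arccosh\frac{1-\varepsilon/2}{1-Ct^{-\tau}}\,dt+O(1)$; since on $[N(x),M]$ this integrand is $O(\sqrt{\varepsilon/M})=o(1)$ while $M-N(x)<1$, the upper limit may be replaced by $M$ at cost $o(1)$. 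The substitution $t=Ms$, under which $Ct^{-\tau}=\tfrac\varepsilon2 s^{-\tau}$, then gives
\[
\sum_{n=N_0}^{N(x)}\gamma_n(x)=M\int_{(N_1+1)/M}^{1}\arccosh\frac{1-\varepsilon/2}{1-\frac\varepsilon2 s^{-\tau}}\,ds+O(1),
\]
with lower limit tending to $0$.

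The heart of the proof is the $\varepsilon$-expansion of the integrand. With $1+z=\frac{1-\varepsilon/2}{1-\frac\varepsilon2 s^{-\tau}}$ one has $\arccosh(1+z)=\sqrt\varepsilon\,(s^{-\tau}-1)^{1/2}\,G(\varepsilon,s)$, where $G(\varepsilon,s)=(1-\tfrac\varepsilon2 s^{-\tau})^{-1/2}F(z)$, $F(w)=\arccosh(1+w)/\sqrt{2w}$ is analytic for $|w|<2$, and $z=\tfrac\varepsilon2(s^{-\tau}-1)/(1-\tfrac\varepsilon2 s^{-\tau})$. Choosing $N_1$ large enough that $\tfrac\varepsilon2 s^{-\tau}$ stays small on the range $s\in[(N_1+1)/M,1]$ keeps $|z|$ bounded away from $2$ there, so $G$ is analytic in $\varepsilon$ with Taylor coefficients $\psi_j(s)$ that are polynomials of degree $\le j$ in $s^{-\tau}$ satisfying $|\psi_j(s)|\lesssim s^{-j\tau}$, and with remainder $|R_L(\varepsilon,s)|\lesssim\varepsilon^L s^{-L\tau}$ after $L=\lceil\tfrac1\tau-\tfrac12\rceil$ terms. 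Writing $(s^{-\tau}-1)^{1/2}\psi_j(s)=\sum_{k=0}^j d_{j,k}(s^{-\tau}-1)^{k+1/2}$ and substituting $w=s^\tau$, each resulting integral over $[0,1]$ equals $\tfrac1\tau B(\tfrac1\tau-k-\tfrac12,\,k+\tfrac32)$, finite because $k\le j\le L-1<\tfrac1\tau-\tfrac12$; hence the $j$-th term of the expansion contributes a constant multiple of $M\varepsilon^{j+1/2}=(2C)^{1/\tau}\varepsilon^{j+1/2-1/\tau}=(2C)^{1/\tau}(2-x)^{-1/\tau+1/2+j}$, exactly the shape in \eqref{asymp1d} with $n=j$. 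Replacing the integration limit $(N_1+1)/M$ by $0$ in these finitely many terms costs only $O(1)$, since near $s=0$ the integrands are $\asymp s^{-\tau(j+1/2)}$ with $\tau(j+1/2)<1$ and $M\varepsilon^{j+1/2}\cdot((N_1+1)/M)^{1-\tau(j+1/2)}\asymp(2C)^{1/\tau}$. Finally the remainder contributes $\lesssim M\varepsilon^{1/2}\int_{(N_1+1)/M}^{1}\varepsilon^L s^{-\tau(L+1/2)}\,ds$; since $\tau(L+1/2)\ge1$ this is $O(1)$ when the inequality is strict and $O(\log(2-x))$ in the borderline case $\tau(L+1/2)=1$ — precisely the source of the $O(\log(2-x))$, rather than $O(1)$, error in \eqref{asymp1d}. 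Collecting the coefficients of $(2-x)^{-1/\tau+1/2+n}$ — incorporating the factor $2$ from \eqref{eq18}, the binomial coefficients of $(1-\cdot)^{-1/2}$, and the Taylor coefficients of $\arccosh$ — and rewriting the Beta functions in Gamma-function form yields the stated formula for $T_n$.

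The main obstacle is the bookkeeping of this expansion and its error terms: one must verify that exactly the indices $j<\tfrac1\tau-\tfrac12$ produce divergent powers of $2-x$, that for exactly those indices all the Beta integrals converge, that the three approximation errors (sum versus integral, $N(x)$ versus $M$, and lowering the integration limit to $0$) are each $O(1)$, and that the tail remainder is $O(\log(2-x))$. Once the structure is in place, extracting the explicit constant $T_n$ is a routine but lengthy manipulation of the series for $(1-\cdot)^{-1/2}$ and for $\arccosh(1+\cdot)/\sqrt{2\,\cdot}$ together with the Beta-function evaluations.
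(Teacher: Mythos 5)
Your overall strategy is sound and close in spirit to the paper's: both routes reduce to Theorem~\ref{thm2} (your explicit check that $a_n\le 1$, $b_n\le 0$ forces $p_n(2)\ge n$ and hence non-subordinacy of the Dirichlet solution is exactly right, and is the cleanest way to dispose of that hypothesis without invoking $\mu(\{2\})=0$), both expand $\arccosh$ in half-integer powers of a small quantity, and both evaluate the resulting expressions via the Beta integral $\int_0^1(s^{-\tau}-1)^{k+1/2}\,ds$. Where you differ: the paper sets $x=2-2\epsilon$ and uses $H(z)=\arccosh\frac1{1-z}$ evaluated at $\frac{\epsilon_n-\epsilon}{1-\epsilon}$, so that the $n$-independent factor $(1-\epsilon)^{-l-1/2}$ pulls out and only pure power sums $\sum_n(\epsilon_n-\epsilon)^{l+1/2}$ remain, which are dispatched by Lemma~\ref{lemma36}; you instead write $1+z=\frac{1-\varepsilon/2}{1-Cn^{-\tau}}$, pass to an integral by monotone comparison, rescale by $M=(2C/\varepsilon)^{1/\tau}$, and Taylor-expand a two-variable function $G(\varepsilon,s)$. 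Your version works --- the error accounting (sum versus integral, $N(x)$ versus $M$, lowering the lower limit to $0$, and the tail with $\tau(L+\tfrac12)\ge1$ producing the $O(\log(2-x))$) is correct --- but it is noticeably messier, because the factor $(1-\tfrac{\varepsilon}{2}s^{-\tau})^{-1/2}$ and the denominator of $z$ depend on $s$ and cannot be pulled out of the integral, so each coefficient $\psi_j$ must be re-expanded in powers of $s^{-\tau}-1$ before integrating.

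The one substantive gap is that you never actually compute $T_n$, and the explicit value of $T_n$ is the content of this theorem: the mere existence of an expansion of the shape \eqref{asymp1d} already follows from Theorem~\ref{thm3}. ``Routine but lengthy manipulation'' understates what remains: in the paper the inner sum over $m$ in $T_n$ comes from a specific closed form for the Taylor coefficients of $H$, namely $\bigl(l+\tfrac12\bigr)H_l=\tfrac1{\sqrt2}\sum_{m=0}^l\frac{\Gamma(m+\frac12)}{2^m m!}$, obtained by comparing coefficients in $H'(z)=\frac1{(1-z)\sqrt{2z-z^2}}$. In your setup you would need the analogous identity for the coefficients of $F(w)=\arccosh(1+w)/\sqrt{2w}$, combined with your re-expansion coefficients $d_{j,k}$ and the expansion of $(1-\tfrac{\varepsilon}{2}s^{-\tau})^{-1/2}$, and until that is carried out the claimed formula for $T_n$ is unverified. (A minor slip that does not affect the argument: the $O(1)$ cost of lowering the integration limit is of size $(2C)^{k+1/2}(N_1+1)^{1-\tau(k+1/2)}$ up to constants, not $(2C)^{1/\tau}$.)
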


As the last topic of this paper, we consider measures on the unit circle corresponding to monotone power-law decaying Verblunsky coefficients. Let us recall that, for a nontrivial probability measure $\mu$ on $\partial \mathbb{D}$, applying the Gram--Schmidt process to $1, z, z^2, \dots$ one obtains a sequence of orthonormal polynomials $\varphi_n(z)$, $n=0,1,2,\dots$ such that
\[
\int \overline{\varphi_m(z)} \varphi_n(z) d\mu = \delta_{m,n}.
\]
The polynomials obey the recursion
\begin{equation}\label{Szegorecursion}
\varphi_{n+1}(z) = \frac {z \varphi_n(z) - \bar \alpha_n z^n \overline{\varphi_n(1/\bar z)}}{\sqrt{1-\lvert \alpha_n \rvert^2}}
\end{equation}
for some sequence of Verblunsky coefficients $(\alpha_n)_{n=0}^\infty \in \mathbb{D}^\infty$ \cite{OPUC1}.

Denoting the Lebesgue decomposition of $\mu$ by
\[
d\mu=  w(\theta) \frac{d\theta}{2\pi} + d\mu_\s,
\]
we recall \cite[Section 12.1]{OPUC2} that if $(\alpha_n)_{n=0}^\infty$ have bounded variation and $\alpha_n\to 0$, then $w$ is continuous and strictly positive on $(0,2\pi)$ and $\supp \mu_\s \subset \{1\}$. Our interest will be in the asymptotic behavior of $w(\theta)$ as $\theta \to 0$ when $(\alpha_n)_{n=0}^\infty$ is a suitable polynomially decaying sequence.

The connection with Jacobi parameters is obtained by sieving the Verblunsky coefficients and then applying the Szeg\H o mapping; we will describe the details in Section~\ref{sec4}. In this correspondence, one gets 
\begin{equation}\label{Szegosieving}
a_{n}^2 = (1 - \alpha_{n-2})(1+\alpha_{n-1}), \qquad b_n = 0
\end{equation}
(with the convention $\alpha_{-1}=-1$). Therefore, even starting with purely power-law decaying Verblunsky coefficients
\begin{equation}\label{powerlawverblunsky}
\alpha_n = - \frac{D}{(n+n_0)^\tau}
\end{equation}
one obtains from \eqref{Szegosieving} coefficients $a_n$ of the form \begin{equation}\label{anszegomap}
a_n = 1  - \sum_{k=1}^K \frac{(2k-2)! D^{2k}}{2^{2k-1} k! (k-1)!} n^{-2k\tau}  + \frac{D\tau}2 n^{-1-\tau} + \frac{D^2\tau}2 n^{-1-2\tau} + o(n^{-1-2\tau})
\end{equation}
with $K = \lfloor  \frac 1{2\tau} \rfloor+1$.

We will study a more general class of polynomially decaying Verblunsky coefficients, and then revisit \eqref{powerlawverblunsky} to provide a complete asymptotic expansion for that case.

\begin{thm}\label{thm5}
If $\alpha_n \in \mathbb{R}$ for all $n$ and
\begin{equation}
\alpha_n = - \sum_{i=1}^I D_i n^{-\tau_i} + o(n^{-1-\tau_1}), \qquad n\to\infty,
\end{equation}
for some $0 < \tau_1 < \dots < \tau_I$ and $D_1 > 0$, then
\begin{enumerate}[(a)]
\item If $\tau_1 \ge 1$, then $\log w = O(\log \lvert \theta \rvert)$, $\theta \to 0$.
\item If $\tau_1 < 1$, then $\log w$ has asymptotic behavior of the form
\begin{equation}\label{asymptotic10}
\log w(\theta) = - \sum_{j=1}^J P_j \lvert \theta\rvert^{-\lambda_j} + O(\log \lvert \theta\rvert), \qquad \theta \to 0
\end{equation}
with $J \in \mathbb{N}$ and $\lambda_1 > \lambda_2 > \dots > \lambda_J > 0$. The leading term is given by
\begin{equation}\label{asymp13}
P_1 = 2^{\frac 1{\tau_1} - 1} D_1^{\frac 1{\tau_1}} \frac{\Gamma\left(\frac 1{2\tau_1} - \frac 12\right) \sqrt\pi }{ \Gamma\left(\frac 1{2\tau_1} \right) }, \qquad \lambda_1 = \frac 1{\tau_1} - 1.
\end{equation}
\end{enumerate}
\end{thm}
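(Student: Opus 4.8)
The plan is to reduce to Theorem~\ref{thm3} via the sieving and Szeg\H o mapping machinery of Section~\ref{sec4}. Since the $\alpha_n$ are real, the associated measure $\mu$ is invariant under complex conjugation; sieving its Verblunsky coefficients and applying the Szeg\H o map produces a Jacobi matrix with $b_n\equiv 0$ and, by \eqref{Szegosieving}, $a_n^2=(1-\alpha_{n-2})(1+\alpha_{n-1})$, whose spectral density $f$ near $x=2$ is tied to $w$ near $\theta=0$ through $x=2\cos(\theta/2)$, so that $2-x=4\sin^2(\theta/4)=\tfrac14\theta^2(1+O(\theta^2))$ and, up to a Jacobian factor, $\log w(\theta)=\log f(x)+O(\log\lvert\theta\rvert)$. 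Thus it suffices to control $\log f(x)$ as $x\nearrow 2$. Part (a) is the easy regime: for $\tau_1>1$ the coefficients lie in $\ell^1$, and Baxter's theorem \cite{OPUC2} gives that $w$ is continuous and strictly positive on all of $\partial\mathbb D$, so $\log w=O(1)$ near $\theta=0$; the borderline case $\tau_1=1$ is handled by standard estimates for OPUC with monotone $\ell^2$ coefficients.

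For part (b), assume $\tau_1<1$. Inserting the asymptotics of $\alpha_n$ into $a_n^2=(1-\alpha_{n-2})(1+\alpha_{n-1})$ and Taylor expanding first the index shifts $(n-1)^{-\tau_i}$, $(n-2)^{-\tau_i}$ in powers of $1/n$ and then the square root, one obtains $a_n=1-\sum_j c_j n^{-\rho_j}+o(n^{-1-\tau_1})$ with leading exponent $\rho_1=2\tau_1$ and $c_1=\tfrac12 D_1^2>0$; in particular $a_n$ is eventually increasing and $2-2a_n-b_n=D_1^2 n^{-2\tau_1}+\dots$, so the quantity \eqref{betadefn} for the reduced parameters is $\beta=\min(\sigma_1,\rho_1)=2\tau_1<2$ and, by \eqref{C1defn}, the associated $C_1$ equals $2c_1=D_1^2$. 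The hypothesis $\mu(\{2\})=0$ of Theorem~\ref{thm3}(b) is automatic here: since $D_1>0$ we have $\alpha_n<0$ for large $n$, and the Szeg\H o recursion \eqref{Szegorecursion} at $z=1$ for real Verblunsky coefficients gives $\varphi_n(1)=\prod_{k=0}^{n-1}\sqrt{(1-\alpha_k)/(1+\alpha_k)}$, an eventually increasing positive sequence, hence bounded below by a positive constant; therefore $\sum_n\lvert\varphi_n(1)\rvert^2=\infty$, $z=1$ is not a mass point of $\mu$, and under the correspondence $\mu(\{2\})=0$ for the Jacobi measure (equivalently, $(p_{n-1}(2))_{n=1}^\infty$ is not subordinate at $x=2$).

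Applying Theorem~\ref{thm3}(b) then gives $\log f(x)=-\sum_i Q_i(2-x)^{-\kappa_i}+O(\log(2-x))$ with $\kappa_1=\tfrac1\beta-\tfrac12=\tfrac1{2\tau_1}-\tfrac12$ and, by \eqref{leadingcoeff}, $Q_1=C_1^{1/\beta}\,\Gamma(\tfrac1\beta-\tfrac12)\sqrt\pi/\Gamma(\tfrac1\beta)=D_1^{1/\tau_1}\,\Gamma(\tfrac1{2\tau_1}-\tfrac12)\sqrt\pi/\Gamma(\tfrac1{2\tau_1})$. Substituting $2-x=\tfrac14\theta^2(1+O(\theta^2))$ turns each $(2-x)^{-\kappa_i}$ into $2^{2\kappa_i}\lvert\theta\rvert^{-2\kappa_i}(1+O(\theta^2))$, the $O(\theta^2)$ corrections producing only lower-order power terms that are absorbed into the finite sum \eqref{asymptotic10} or into $O(\log\lvert\theta\rvert)$; combined with $\log w=\log f+O(\log\lvert\theta\rvert)$ this yields \eqref{asymptotic10} with $\lambda_j=2\kappa_j$, so $\lambda_1=\tfrac1{\tau_1}-1$ and $P_1=2^{2\kappa_1}Q_1=2^{1/\tau_1-1}D_1^{1/\tau_1}\,\Gamma(\tfrac1{2\tau_1}-\tfrac12)\sqrt\pi/\Gamma(\tfrac1{2\tau_1})$, exactly \eqref{asymp13}.

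I expect the main obstacle to be the precision of the reduction in the second paragraph. Because $1-a_n^2$ contains the consecutive difference $\alpha_{n-2}-\alpha_{n-1}$, the input error $o(n^{-1-\tau_1})$ in $\alpha_n$ yields only an $o(n^{-1-\tau_1})$ error in $2-2a_n-b_n$; since $\tau_1<1$ this is weaker than the $o(n^{-1-\beta})=o(n^{-1-2\tau_1})$ literally demanded by \eqref{anansatz}, so Theorem~\ref{thm3}(b) cannot be quoted verbatim. I would instead rerun its proof, checking that this extra error propagates through $\gamma_n(x)\asymp\sqrt{(2-2a_n)-(2-x)}$ and the sum $\sum_{n=N_0}^{N(x)}\gamma_n(x)$ of \eqref{eq18} to contribute only $o(\log(2-x))$: in the bulk $n\ll N(x)$ one has $\gamma_n\asymp n^{-\tau_1}$, so each term is perturbed by $o(n^{-1})$ and the total perturbation is $o(\log N(x))=o(\log(2-x))$, while the transition region near $n=N(x)$ contributes even less (it is controlled by the boundary-layer term $h(x)$ already present in \eqref{eq18}); hence the weaker error is absorbed into the stated $O(\log(2-x))$. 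The remaining bookkeeping of the finite lists of exponents $\rho_j,\kappa_i,\lambda_j$ is routine.
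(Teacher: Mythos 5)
Your route is the same as the paper's: sieve the Verblunsky coefficients, apply the Szeg\H o map to get $b_n\equiv 0$ and $a_n^2=(1-\alpha_{n-2})(1+\alpha_{n-1})$ with leading behavior $a_n=1-\tfrac12 D_1^2n^{-2\tau_1}+\dots$, rule out the mass point via the product formula $\varphi_n(1)=\prod_j\sqrt{(1-\alpha_j)/(1+\alpha_j)}$ (which is exactly the paper's lemma), invoke Theorem~\ref{thm3} with $\beta=2\tau_1$, $C_1=D_1^2$, and substitute $x=2\cos(\theta/2)$, $2-x=\tfrac{\theta^2}{4}(1+O(\theta^2))$, keeping the finitely many negative powers of $\theta$. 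Your constant chase $P_1=2^{2\kappa_1}Q_1$, $\lambda_1=2\kappa_1$ reproduces \eqref{asymp13} correctly. Your final paragraph is in fact \emph{more} careful than the paper at one point: the $o(n^{-1-\tau_1})$ error in $\alpha_n$ does not cancel in the difference $\alpha_{n-1}-\alpha_{n-2}$, so the induced $a_n$ only carries an $o(n^{-1-\tau_1})$ remainder rather than the $o(n^{-1-2\tau_1})$ literally demanded by \eqref{anansatz}; flagging that Theorem~\ref{thm3} cannot be quoted verbatim is a legitimate observation. Your proposed repair, however, only tracks the perturbation of $\sum_n\gamma_n(x)$; the same oversized error also threatens the eventual monotonicity \eqref{anbnincrease} (needed for the whole machinery of Theorem~\ref{thm2}) and the lower bound \eqref{badiff} on $a_{N+2}-a_{N+1}$ that makes $h(x)=O(\log(2-x))$, since a consecutive difference of two $o(n^{-1-\tau_1})$ remainders can dominate the main term $\sim n^{-1-2\tau_1}$ when $\tau_1<1$. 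A complete repair has to address these as well, not just the bulk sum.

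The one outright gap is part (a) at $\tau_1=1$. Baxter's theorem covers $\tau_1>1$ (and gives the stronger $\log w=O(1)$), but "standard estimates for OPUC with monotone $\ell^2$ coefficients" is not an argument: monotonicity of $\alpha_n$ is not among the hypotheses, and no standard $\ell^2$ result yields the local bound $\log w=O(\log|\theta|)$ at $\theta=0$. The correct route — the one the paper takes — is to observe that the induced Jacobi parameters have $2-2a_n-b_n\sim D_1^2 n^{-2\tau_1}$, so $\beta=2\tau_1\ge 2$ and Theorem~\ref{thm3}(a) applies, giving $\log f=O(\log(2-x))$ via $g(x)=O(\log N(x))$ without any subordinacy or mass-point discussion; transporting this through \eqref{Szegosieving2} finishes part (a) uniformly for $\tau_1\ge 1$. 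With that substitution, and with the error-term repair extended as indicated above, your argument matches the paper's proof.
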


\begin{thm}\label{thm6}
If $\alpha_n$ are given by \eqref{powerlawverblunsky} for some $\tau \in (0,1)$ and $0 < D < n_0^\tau$, then
\begin{equation}\label{asymp1c}
\log w(\theta) = - \sum_{n=0}^{L-1} P_n \left\lvert \sin \frac \theta 2 \right\rvert^{-\frac 1{\tau} + 1 + 2n}  + O(\log \lvert \theta\rvert)
\end{equation}
with $L = \left\lceil \frac 1{2\tau} - \frac 12 \right\rceil$ and
\begin{equation}
P_n = D^{\frac 1\tau} \frac{\Gamma\left( n+\frac 12\right)}{\Gamma\left(\frac 1{2\tau}\right)} \sum_{l=0}^{n} \frac{\Gamma\left( \frac 1{2\tau} - l - \frac 12 \right)}{(n-l)!}.
\end{equation}
\end{thm}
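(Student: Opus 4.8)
The plan is to pass to the Jacobi picture via the Szeg\H o mapping of Section~\ref{sec4}, apply the machinery behind Theorems~\ref{thm2} and~\ref{thm4} to the resulting Jacobi parameters, evaluate the relevant sum explicitly with expansions tailored to the product form of $a_n^2$, and translate the answer back. Since $0 < D < n_0^\tau$, the coefficients \eqref{powerlawverblunsky} are real, satisfy $\alpha_n \in (-1,0)$ for all $n\ge 0$, and decrease to $0$. By \eqref{Szegosieving}, the corresponding sieved and Szeg\H o-mapped measure has $b_n \equiv 0$ and $a_n^2 = (1 + D(n-2+n_0)^{-\tau})(1 - D(n-1+n_0)^{-\tau})$. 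Expanding the square root and keeping all terms with exponent $\le 1+2\tau$ gives, as in \eqref{anszegomap},
\[
a_n = 1 - \sum_{k=1}^{K} \frac{(2k-2)!\,D^{2k}}{2^{2k-1}k!\,(k-1)!}\,n^{-2k\tau} + \frac{D\tau}{2}\,n^{-1-\tau} + e\,n^{-1-2\tau} + o(n^{-1-2\tau})
\]
for an explicit constant $e$ and $K=\lfloor\frac1{2\tau}\rfloor+1$; this has the form \eqref{anansatz}, \eqref{bnansatz} with $\beta = 2\tau < 2$ and $C_1 = D^2$ in the notation of Theorem~\ref{thm3}. Conditions \eqref{anbnincrease}, \eqref{anbnlimit} hold ($a_n^2 = 1 - D^2 n^{-2\tau}(1+o(1))$ increases strictly to $1$ for large $n$), and $a_n, b_n$ are not both eventually constant.

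Next I verify the hypothesis of Theorem~\ref{thm2} that $(p_{n-1}(2))$ is not subordinate; this is the only point beyond \cite{KreimerLastSimon09}, since here $a_n\le 1$ need not hold for all $n$. By Theorem~\ref{thm1} there is a bounded subordinate solution $v$ at $x=2$, and since $\Gamma_j = \arccosh(1/a_j) \sim D j^{-\tau}$ gives $\sum_{N_0}^n\Gamma_j \sim \frac{D}{1-\tau}n^{1-\tau}\to\infty$, the bound \eqref{decayingsolnasymptotics} shows $v\in\ell^2(\mathbb{N})$; when the subordinate solution is $\ell^2$, subordinacy of $(p_{n-1}(2))$ is equivalent to $\mu(\{2\})\neq 0$. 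But evaluating \eqref{Szegorecursion} at $z=1$ with real $\alpha_n$ gives $\varphi_n(1) = \prod_{k=0}^{n-1}\bigl(\frac{1-\alpha_k}{1+\alpha_k}\bigr)^{1/2}$, which diverges to $+\infty$ because $\alpha_k<0$ and $\sum_k(-\alpha_k)=\infty$; hence $\varphi_n(1)\notin\ell^2$, so $\mu(\{1\})=0$, and through the Szeg\H o dictionary the Jacobi spectral measure has no mass at $x=2$. Thus Theorem~\ref{thm2} applies, and $\mu(\{2\})=0$ also gives the hypothesis of Theorem~\ref{thm3}(b).

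By Theorem~\ref{thm2}, $\log f(x) = -2\sum_{n=N_0}^{N(x)}\gamma_n(x) + O(h(x)) + O(1)$, and from \eqref{Nx}, \eqref{eq19} one gets $N(x)\sim D^{1/\tau}(2-x)^{-1/(2\tau)}$ and $a_{N(x)+2}-a_{N(x)+1}$ a positive power of $2-x$, so $h(x) = O(\log(2-x))$. To evaluate $\sum_{n=N_0}^{N(x)}\gamma_n(x)$ to all orders that do not vanish as $x\nearrow 2$, write $\gamma_n(x) = \arccosh\frac{x}{2a_n}$, expand $\arccosh(1+t) = \sqrt{2t}\sum_{j\ge 0}e_j t^j$ with $t=\frac{x-2a_n}{2a_n}$, substitute the expansion of $a_n$, and replace the sum by an integral (the Euler--Maclaurin corrections and the contribution of the region $n\approx N(x)$, where $\gamma_n(x)\to 0$, are $O(\log(2-x))$). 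After the rescaling $n=N(x)u$ the integrals become Beta functions, and the contributions of $n^{-1-\tau}$, $n^{-1-2\tau}$, and of $n^{-2k\tau}$ beyond the cutoff fall at nonnegative powers of $2-x$, hence into the $O(\log(2-x))$ error. What remains is $-\sum_{m=0}^{L-1}(\cdots)(2-x)^{-\frac1{2\tau}+\frac12+m}$ with $L=\lceil\frac1{2\tau}-\frac12\rceil$; resumming the combinations of the $e_j$, the binomial coefficients from $(1-D^2(n-2+n_0)^{-\tau}(n-1+n_0)^{-\tau})^{1/2}$, and the Beta integrals produces precisely the $\Gamma$-function expressions for $P_n$ in the statement. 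Passing back through Section~\ref{sec4}'s dictionary replaces $2-x$ by a multiple of $\sin^2(\theta/2)$, with one further $O(\log|\theta|)$ from the Jacobian and the powers of $2$ absorbed into the $P_n$, which yields \eqref{asymp1c}.

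All the conceptual content is in Theorems~\ref{thm1}--\ref{thm3}; the main obstacle is the bookkeeping of the last step — carrying the asymptotics of $\sum_{n=N_0}^{N(x)}\gamma_n(x)$ through every term that grows as $x\nearrow 2$, which forces one to count Taylor orders carefully in $\arccosh$, in $a_n$ and in $a_n^{-1}$, to control the Euler--Maclaurin remainder and the edge region $n\approx N(x)$, and to recognize how the emerging sums of products of Beta functions collapse to the stated closed form. A secondary but genuine point is the non-subordinacy of $(p_{n-1}(2))$ established in the second step, which has no analogue in \cite{KreimerLastSimon09}.
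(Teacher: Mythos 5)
Your overall architecture is right and matches the paper's: Szeg\H o mapping with sieving, verification that $\mu(\{1\})=0$ via $\varphi_n(1)=\prod(\frac{1-\alpha_k}{1+\alpha_k})^{1/2}\notin\ell^2$ (this is exactly the paper's Lemma in Section~\ref{sec4}), hence non-subordinacy of the Dirichlet solution, then Theorem~\ref{thm2} reduces everything to the asymptotics of $g(x)=\sum_{n\le N(x)}\gamma_n(x)$. But there are two genuine gaps in the part that actually constitutes the content of the theorem, namely the closed form of $P_n$ and the variable $\lvert\sin\frac\theta2\rvert$.

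First, the computation you describe is the generic one: expand $\arccosh(1+t)$ with $t=\frac{x-2a_n}{2a_n}$ and feed in the full expansion of $a_n$, which contains every power $n^{-2k\tau}$, $k=1,\dots,\lfloor\frac1{2\tau}\rfloor+1$, plus the $n^{-1-\tau}$ and $n^{-1-2\tau}$ terms. This is precisely the route the paper declares impractical, and your proof of the stated $P_n$ reduces to the unsubstantiated sentence that ``resumming \dots produces precisely the $\Gamma$-function expressions.'' That resummation is the theorem; asserting it is not proving it. The paper's actual device, which you gesture at (``tailored to the product form of $a_n^2$'') but do not use, is to set $a_n^2=1-\epsilon_n$ with $\epsilon_n=D^2n^{-2\tau}-D\tau n^{-1-\tau}-D^2\tau n^{-1-2\tau}+o(n^{-1-2\tau})$ (only three terms), parametrize $x=2\sqrt{1-\epsilon}$, and use
\[
G(z)=\arccosh\frac1{\sqrt{1-z}}=\tfrac12\ln\frac{1+\sqrt z}{1-\sqrt z}=\sum_{l\ge0}\tfrac1{2l+1}z^{l+1/2},
\qquad \gamma_n(x)=G\!\left(\frac{\epsilon_n-\epsilon}{1-\epsilon}\right),
\]
so that a single application of Lemma~\ref{lemma36} (with the $n^{-1-\tau}$ correction killed because its $\lambda$ equals $2l\tau+1\ge1$) and one Beta integral per $l$ give the stated coefficients. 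Without this or an equivalent mechanism your argument does not reach the claimed formula.

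Second, your last step is incorrect as written: $2-x=2-2\cos\frac\theta2=4\sin^2\frac\theta4$ is \emph{not} a constant multiple of $\sin^2\frac\theta2$; the ratio is $\sec^2\frac\theta4=1+O(\theta^2)$. Since the expansion contains several distinct negative powers, re-expanding powers of $2-x$ in powers of $\sin^2\frac\theta2$ mixes terms and changes every coefficient $P_n$ with $n\ge1$. So even if you had the expansion in $(2-x)$, ``absorbing powers of $2$'' would give the wrong subleading $P_n$. In the paper this issue never arises because the natural variable of the $G$-parametrization is $\epsilon=1-\frac{x^2}4$, which equals $\sin^2\frac\theta2$ \emph{exactly} under $x=2\cos\frac\theta2$ — this is why the theorem is stated in powers of $\lvert\sin\frac\theta2\rvert$ in the first place.
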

Of course, by expanding the trigonometric terms, \eqref{asymp1c} can be rewritten into the form \eqref{asymptotic10} if so desired, with $\lambda_j = \frac 1\tau +1 - 2j$. For instance, if
\[
\alpha_n = - \frac {D}{(n+n_0)^{1/2}},
\]
Theorem~\ref{thm6} gives
\[
\log w(\theta) = - \frac{2D^2 \pi}{\lvert \theta\rvert} + O(\log\lvert \theta\rvert), \quad \theta \to 0.
\]

We thank Leonid Golinskii for posing a question answered in this paper and for useful discussions, Barry Simon for showing us the trick of combining sieving with the Szeg\H o mapping used for the OPUC application, Brian Simanek for useful discussions, and an anonymous referee for comments that improved the exposition.

\section{Jacobi matrices with eventually increasing $a_n$ and $b_n$}

In this section, we will prove Theorems~\ref{thm1} and \ref{thm2}. We begin by establishing existence of a subordinate solution at $x=2$ (part (a) of Theorem~\ref{thm1}) and some qualitative properties of the solutions.

\begin{prop}\label{propeigensolutions2}
Assume that $b_n \le 0$ and $a_n \le 1$ for all $n \ge N_0$. Then there exists a solution $v(2) = (v_n(2))_{n=1}^\infty$ of the recurrence relation
\begin{equation}\label{eigen2}
a_{n-1} u_{n-1} + b_n u_n + a_n u_{n+1} = 2 u_n
\end{equation}
such that:
\begin{enumerate}[(a)]
\item $v_n(2) \ge v_{n+1}(2) > 0$ for all $n \ge N_0$; in particular, $v(2) \in \ell^\infty$.

\item For any solution $u=(u_n)_{n=1}^\infty$ of \eqref{eigen2} which is not a multiple of $v$,
\[
\liminf_{n\to\infty} \frac{\lvert u_n\rvert}n > 0.
\]

\item $v(2)$ is subordinate in the sense of Gilbert--Pearson.
\end{enumerate}
\end{prop}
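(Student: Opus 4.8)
The argument rests on two elementary facts about $x=2$ under the standing hypotheses $a_n\le 1$, $b_n\le 0$ (which hold for $n\ge N_0$). First, $\frac{2-b_n}{2a_n}\ge 1$, so $\Gamma_n$ in \eqref{decayingsolnasymptotics} makes sense. Second, rewriting \eqref{eigen2} as $a_nu_{n+1}=(2-b_n)u_n-a_{n-1}u_{n-1}$ and setting $\delta_n:=u_{n+1}-u_n$, a one-line computation yields the identity
\[
a_n\delta_n=(2-b_n-a_n-a_{n-1})\,u_n+a_{n-1}\delta_{n-1},
\]
in which the coefficient $2-b_n-a_n-a_{n-1}$ is $\ge 0$ for every $n\ge N_0+1$. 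These are manifestations of $x=2$ being a parabolic (non-hyperbolic) edge of the transfer-matrix dynamics, and each of the three parts will follow from one of them together with a monotonicity iteration.

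For part~(a), I would construct $v(2)$ as a limit of truncated solutions. For each $N>N_0$, let $w^{(N)}$ solve \eqref{eigen2} on $\mathbb N$ with terminal data $w^{(N)}_{N+1}=0$, $w^{(N)}_N=1$, computed by running the recurrence backward (legitimate because $a_{n-1}>0$). A downward induction from $n=N$ to $n=N_0+1$ shows $w^{(N)}_{n-1}\ge w^{(N)}_n>0$: given $0\le w^{(N)}_{n+1}\le w^{(N)}_n$ with $w^{(N)}_n>0$, one has $a_{n-1}w^{(N)}_{n-1}=(2-b_n)w^{(N)}_n-a_nw^{(N)}_{n+1}\ge 2w^{(N)}_n-w^{(N)}_n=w^{(N)}_n$ (using $b_n\le 0$ and $a_n\le 1$), and dividing by $a_{n-1}\le 1$ gives the claim. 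Rescaling so that $w^{(N)}_{N_0}=1$, we have $w^{(N)}_n\in[0,1]$ for $N_0\le n\le N$; for $n<N_0$ the values $w^{(N)}_n$ are the image of $(w^{(N)}_{N_0},w^{(N)}_{N_0+1})\in\{1\}\times[0,1]$ under a fixed ($N$-independent) linear map, hence bounded. A diagonal subsequence therefore converges pointwise to a solution $v$ of \eqref{eigen2} with $v_{N_0}=1$, non-increasing and in $[0,1]$ on $\{N_0,N_0+1,\dots\}$; in particular $v\in\ell^\infty$. Strict positivity: if $v_n=0$ for some $n\ge N_0$ then $0\le v_{n+1}\le v_n=0$, and the backward recurrence $a_{m-1}v_{m-1}=(2-b_m)v_m-a_mv_{m+1}$ forces $v_{m-1}=0$ for $m=n,n-1,\dots,N_0+1$, contradicting $v_{N_0}=1$. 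This proves (a).

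For part~(b), pick any solution $\tilde u$ with $\tilde u_{N_0}>0$ and $\delta_{N_0}:=\tilde u_{N_0+1}-\tilde u_{N_0}>0$. Using the identity above and $2-b_n-a_n-a_{n-1}\ge 0$ for $n\ge N_0+1$, an upward induction gives $\tilde u_n>0$, $\delta_n>0$, and $a_n\delta_n\ge a_{N_0}\delta_{N_0}$ for all $n\ge N_0$; since $a_n\le 1$, this yields $\delta_n\ge c:=a_{N_0}\delta_{N_0}>0$, hence $\tilde u_n\ge\tilde u_{N_0}+c(n-N_0)$ and $\liminf_{n\to\infty}\tilde u_n/n\ge c>0$. (In the special case $a_n\le 1$, $b_n\le 0$ for \emph{all} $n$ one may take $N_0=1$ and $\tilde u$ the Dirichlet solution $\tilde u_n=p_{n-1}(2)$, where $\tilde u_1=1$ and $\tilde u_2=(2-b_1)/a_1\ge 2$; the same computation gives $a_n\delta_n\ge a_1\delta_1=2-b_1-a_1\ge 1$, hence $p_{n-1}(2)\ge n$.) Because $v$ is bounded while $\tilde u$ is unbounded, $\{v,\tilde u\}$ is a basis of the solution space, so a solution $u$ which is not a multiple of $v$ has the form $u=\alpha v+\beta\tilde u$ with $\beta\ne 0$; since $v_n/n\to 0$,
\[
\liminf_{n\to\infty}\frac{\lvert u_n\rvert}{n}\ \ge\ \liminf_{n\to\infty}\Bigl(\lvert\beta\rvert\,\frac{\tilde u_n}{n}-\lvert\alpha\rvert\,\frac{v_n}{n}\Bigr)\ =\ \lvert\beta\rvert\,\liminf_{n\to\infty}\frac{\tilde u_n}{n}\ >\ 0,
\]
which is (b). Part~(c) then follows at once: for $u$ not a multiple of $v$ we have $\sum_{n=1}^N\lvert v_n\rvert^2\le\lVert v\rVert_\infty^2\,N$, while by (b) there are $c'>0$ and $M$ with $\lvert u_n\rvert\ge c'n$ for $n\ge M$, so $\sum_{n=1}^N\lvert u_n\rvert^2\gtrsim N^3$; hence the ratio in the Gilbert--Pearson definition is $O(N^{-2})\to 0$. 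I expect the main obstacle to be part~(a)---namely producing the subordinate solution in the first place, i.e.\ organizing the truncation/compactness limit and ruling out degeneration of the limit (strict positivity of $v$); once $v$ and the linearly growing solution $\tilde u$ are in hand, parts (b) and (c) are routine consequences of the two sign inequalities above.
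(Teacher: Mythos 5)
Your proof is correct, and parts (b) and (c) run on exactly the same engine as the paper's: the identity $a_n\delta_n-a_{n-1}\delta_{n-1}=(2-b_n-a_n-a_{n-1})u_n\ge 0$ together with $a_n\le 1$ to get $\delta_n\ge a_{N_0}\delta_{N_0}$, linear growth of any solution with positive value and positive increment at $N_0$, and then a basis decomposition $u=\alpha v+\beta\tilde u$. Where you genuinely diverge is the construction of $v(2)$ in part (a). The paper uses a shooting argument in the initial slope: with $u_{N_0}(t)=1$, $u_{N_0+1}(t)=t$, it sets $T=\{t: u_n(t)>0\ \forall n>N_0\}$, shows $(1,\infty)\subset T\subset(0,\infty)$, and proves by a continuity-in-$t$ contradiction that the solution at $s=\inf T$ is positive and non-increasing. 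You instead run the recurrence backward from terminal data $w^{(N)}_{N+1}=0$, $w^{(N)}_N=1$, prove monotone positivity by downward induction, normalize at $N_0$, and extract a pointwise limit by compactness. Your route is essentially the construction the paper itself uses later (its Proposition on the family $v_n(x)$ for $x<2$, built from $w_{N(x)}=1$, $w_{N(x)+1}=0$), pushed directly to $x=2$; it is arguably more robust and avoids the slightly delicate case analysis at $s=\inf T$, at the cost of only producing $v(2)$ as a subsequential limit rather than as a canonically defined solution (which is harmless here, since uniqueness of the bounded solution up to scalars follows a posteriori from (b)). All the small points that need checking in your argument do check out: the downward induction legitimately starts at $n=N\ge N_0+1$, the rescaling is licit because $w^{(N)}_{N_0}\ge w^{(N)}_{N_0+1}>0$, the finitely many values below $N_0$ stay bounded under the fixed backward linear map, and the strict-positivity argument correctly uses that $v_n=0$ forces $v_{n+1}=0$ by monotonicity before propagating zeros down to $N_0$.
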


\begin{proof}
If $u_{n} \ge 0$ for some $n>N_0$, then by \eqref{eigen2},
\[
a_{n} (u_{n+1} - u_n) - a_{n-1} (u_n - u_{n-1}) = (2-a_n-a_{n-1} - b_n) u_n \ge 0.
\]
Thus, if $u$ is a solution such that $u_{N+1} > u_N \ge 0$ for some $N \ge N_0$, then it follows by induction that for all $n \ge N_0$,
\[
a_n (u_{n+1} - u_n) \ge a_{n-1} (u_n - u_{n-1}) \text { and } u_n \ge 0.
\]
Therefore,
\[
u_{n+1} - u_n \ge a_n (u_{n+1} - u_n) \ge a_N (u_{N+1} - u_N) > 0\text{ for all }n \ge N_0,
\]
so $\liminf_{n\to \infty} (u_{n+1} - u_n) > 0$ and therefore $\liminf_{n\to \infty} \frac{u_n}n > 0$.

The \eqref{eigen2} has a two-dimensional space of solutions, and a unique solution can be prescribed by setting two consecutive values. For $t\in \mathbb{R}$, denote by $u_n(t)$ that solution of \eqref{eigen2} which obeys $u_{N_0}(t) = 1$ and $u_{N_0+1}(t) = t$. Denote
\[
T = \{ t \in \mathbb{R} \mid  u_n (t) > 0\text{ for all }n > N_0\}
\]
It follows from previous observations that $(1,\infty) \subset T \subset (0, \infty)$, so $s = \inf T$ exists and $s \in [0,1]$.

We prove by contradiction that $u_n(s) \ge u_{n+1}(s)  >0$ for all $n\ge N_0$. Assuming that this was false, let $N$ be the smallest index for which it fails. If $u_{N+1}(s) \le 0 < u_{N}(s)$, then $0 \ge u_{N+1}(s) > u_{N+2}(s)$ so continuity in $t$ would imply that $u_{N+2}(t) < 0$ for $t$ near $s$, contradicting $s = \inf T$. Also, if $u_{N+1}(s) > u_{N}(s) > 0$, then for $t$ near $s$, we would have $u_n(t) > 0$ for $N_0 \le n < N$ and $u_{N+1}(t) > u_{N}(t) > 0$, therefore $u_n(t) > 0$ for all $n \ge N_0$, contradicting $s = \inf T$.

Denoting by $v(2)$ the eigensolution with $v_{N_0}(2)=1$ and $v_{N_0+1}(2)= s$, we have proved that $v_n(2) \ge v_{n+1}(2) > 0$ for all $n\ge N_0$, which proves (a). Representing any other eigensolution as a linear combination of $v(2)$ and the solution with $u_{N_0}=0$, $u_{N_0+1}=1$, we see that any eigensolution which isn't a multiple of $v(2)$ has $\liminf_{n\to\infty} \frac{\lvert u_n\rvert}n > 0$, which is (b). (a) and (b) easily imply (c) by the definition of subordinacy.
\end{proof}

Let us define
\[
x_0 = \max (0, b_{N_0}+2a_{N_0})
\]
and work with $x \in (x_0, 2]$ from now on. Recall the turning point $N(x)$ given by \eqref{Nx}; we allow $x=2$ in which case we write $N(2) = \infty$. Of course, $N(x) \ge N_0$ and $\lim_{x\nearrow 2} N(x) = \infty$. The turning point marks the change from non-oscillatory behavior of eigensolutions to oscillatory behavior. 

Our goal is now to introduce certain uniformly bounded eigensolutions in the non-oscillatory regime for $x<2$.

\begin{prop}\label{propsubordinatex}
There exists a family of eigensolutions $(v_n(x))_{n=1}^\infty$ of \eqref{eigenx} for $x \in (x_0, 2)$ such that
\begin{enumerate}[(a)]
\item $v_{N_0}(x) = 1$ for all $x\in (x_0,2)$;
\item $1 \ge v_n(x) > v_{n+1}(x) \ge 0$ for all $N_0 \le n \le N(x)$;
\item $\lim_{x\nearrow 2} v_n(x) = v_n(2)$ for all $n\in \mathbb{N}$.
\end{enumerate}
\end{prop}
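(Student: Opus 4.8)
The plan is to construct the family $v(x)$ by a shooting argument in the initial slope, running the argument of Proposition~\ref{propeigensolutions2} but truncated at the turning point $N(x)$, and then to obtain (c) by a compactness argument as $x\nearrow 2$.

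For $x\in(x_0,2)$ and $t\in\mathbb{R}$, let $u_n(x,t)$ denote the solution of \eqref{eigenx} with $u_{N_0}(x,t)=1$ and $u_{N_0+1}(x,t)=t$; this is a polynomial in $(x,t)$, hence jointly continuous. The central elementary fact is the rearranged recurrence
\[
a_n\bigl(u_{n+1}-u_n\bigr)-a_{n-1}\bigl(u_n-u_{n-1}\bigr)=\bigl(x-b_n-a_n-a_{n-1}\bigr)u_n,
\]
whose coefficient on the right is $\ge 0$ as long as $N_0<n\le N(x)$, because then $a_{n-1}\le a_n$ by \eqref{anbnincrease} and $a_n+a_{n-1}\le 2a_n\le x-b_n$ by \eqref{Nx}. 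From this discrete convexity I would deduce, by induction from $n=N_0$: (i) if $t\ge 1$ then $u_n(x,t)\ge 1$ for $N_0\le n\le N(x)+1$; and (ii) the solution $w(x)$ with $w_{N_0}=0$, $w_{N_0+1}=1$ satisfies $w_n(x)>0$ for $N_0<n\le N(x)+1$. I would then set
\[
T(x)=\bigl\{\,t\in\mathbb{R}\mid u_n(x,t)>0\ \text{for all }N_0<n\le N(x)+1\,\bigr\}.
\]
Since $u_n(x,t')-u_n(x,t)=(t'-t)\,w_n(x)$ with $w_n(x)>0$ on this range, $T(x)$ is upward closed; combined with $[1,\infty)\subset T(x)\subset(0,\infty)$ (the latter because $u_{N_0+1}(x,t)=t$) this shows $T(x)$ is an interval and $s(x):=\inf T(x)\in[0,1]$. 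The estimate $u_n(x,1-\varepsilon)=u_n(x,1)-\varepsilon w_n(x)\ge 1-\varepsilon\max_{N_0<n\le N(x)+1}w_n(x)$ even gives $1-\varepsilon\in T(x)$ for small $\varepsilon>0$, so $s(x)<1$. Finally define $v_n(x)=u_n(x,s(x))$; then (a) is immediate, and letting $t\downarrow s(x)$ in the inequalities defining $T(x)$ yields $v_n(x)\ge 0$ for $N_0<n\le N(x)+1$.

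For (b), I would argue by contradiction exactly as in Proposition~\ref{propeigensolutions2}: let $N\in[N_0,N(x)]$ be the smallest index for which $1\ge v_N(x)>v_{N+1}(x)\ge 0$ fails. Since $s(x)\in[0,1)$, this cannot fail at $N_0$, so $N>N_0$ and minimality gives $v_{N_0}(x)>\dots>v_N(x)\ge 0$, whence $v_N(x)\le 1$ and $v_n(x)>0$ for $N_0\le n<N$. The failure must then be one of: $v_{N+1}(x)<0$, impossible since $v_{N+1}(x)\ge 0$; $v_N(x)=0$ (with $v_{N+1}(x)\ge 0$), which through the rearranged recurrence at index $N$ forces $v_{N-1}(x)=-\tfrac{a_N}{a_{N-1}}v_{N+1}(x)\le 0$, contradicting $v_{N-1}(x)>0$; or $v_N(x)>0$ and $v_{N+1}(x)\ge v_N(x)$, in which case $a_N(v_{N+1}-v_N)\ge 0$ and the induction from index $N$ gives $v_n(x)>0$ for all $N_0<n\le N(x)+1$, so by continuity in $t$ a whole neighborhood of $s(x)$ lies in $T(x)$, contradicting $s(x)=\inf T(x)$.

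For (c), fix $n$ and choose $x_k\nearrow 2$ along which $s(x_k)$ converges (possible since $s(x)\in[0,1)$), say $s(x_k)\to s^*$. By joint continuity $v_n(x_k)=u_n(x_k,s(x_k))\to\hat v_n:=u_n(2,s^*)$, and since $N(x_k)\to\infty$, properties (a) and (b) pass to the limit: $(\hat v_n)_{n\ge N_0}$ is non-increasing with $\hat v_{N_0}=1$ and $\hat v_n\ge 0$, hence bounded. Writing $s$ for the slope with $v_n(2)=u_n(2,s)$ from Proposition~\ref{propeigensolutions2}, if $s^*\ne s$ then $(\hat v_n)$ is an eigensolution at $x=2$ that is not a multiple of the subordinate solution $v(2)$, hence unbounded by Proposition~\ref{propeigensolutions2}(b) --- a contradiction. (One can avoid part (b) of that proposition: $s^*<s$ gives $s^*\notin T(2)$, so $\hat v_{n_0}\le 0$ for some $n_0>N_0$, forcing $\hat v_{n_0}=0$, and then \eqref{eigen2} propagates this zero, contradicting $\hat v_{N_0}=1$; while $s^*>s$ is excluded by boundedness versus unboundedness as above.) Thus every subsequential limit of $s(x)$ as $x\nearrow 2$ equals $s$, so $s(x)\to s$, and $v_n(x)=u_n(x,s(x))\to u_n(2,s)=v_n(2)$, which is (c). I expect (c) to be the main obstacle: positivity and monotonicity of $v(x)$ are controlled only on the index range $N_0<n\le N(x)+1$, which grows as $x\nearrow 2$, so one cannot conclude on a fixed finite range, and it is the compactness argument together with Proposition~\ref{propeigensolutions2}(b) that repairs this.
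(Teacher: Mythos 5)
Your proof is correct, but the construction in parts (a)--(b) takes a genuinely different route from the paper's. The paper shoots \emph{backwards} from the turning point: it takes the solution $w$ with terminal data $w_{N(x)}=1$, $w_{N(x)+1}=0$, proves $w_n>w_{n+1}\ge 0$ on $N_0\le n\le N(x)$ by a one-line backward induction on the rearranged recurrence, and normalizes by $w_{N_0}$. You instead extend the forward-shooting infimum argument of Proposition~\ref{propeigensolutions2} to $x<2$, defining $T(x)$ as the set of initial slopes giving positivity up to $N(x)+1$ and setting $v(x)=u(x,\inf T(x))$; this forces the same solution (indeed your own argument shows $v_{N(x)+1}(x)$ must vanish, since otherwise all the defining inequalities would be strict and a neighborhood of $s(x)$ would lie in $T(x)$), but at the cost of the more delicate case analysis in your part (b), whereas the paper's terminal data make monotonicity immediate. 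What your route buys is uniformity with the $x=2$ construction: $s(x)=v_{N_0+1}(x)$ is manifestly the same shooting parameter in both regimes, which makes the limit in (c) transparent. For part (c) the two arguments coincide: extract a subsequence along which $v_{N_0+1}(x_k)$ (your $s(x_k)$) converges, observe the limit is a bounded nonnegative non-increasing eigensolution at $x=2$ normalized at $N_0$, and invoke Proposition~\ref{propeigensolutions2}(b) to identify it with $v(2)$ independently of the subsequence. All the individual steps check out (the sign of $x-b_n-a_n-a_{n-1}$ on $n\le N(x)$ via \eqref{Nx} and \eqref{anbnincrease}, the linearity identity $u_n(x,t')-u_n(x,t)=(t'-t)w_n(x)$, the three-way case analysis at the first failing index, and the two-consecutive-zeros backward propagation in your parenthetical alternative), so I see no gap.
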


\begin{proof}
For any $x \in (x_0, 2)$, consider the solution $w(x)$ of \eqref{eigenx} given by $w_{N(x)}(x) = 1$, $w_{N(x)+1}(x) = 0$. We claim that for $N_0 \le n \le N(x)$, $w_n > w_{n+1} \ge 0$. This is proved by backwards induction since it holds for $n = N(x)$ and, for any $N_0 < n < N(x)$, if it holds for $n+1$ then
\[
a_{n-1} (w_{n-1} - w_n) - a_n (w_n - w_{n+1}) =  (x - a_{n-1} - b_n - a_n) w_{n} \ge 0
\]
implies that $w_{n-1} - w_{n} > 0$ so it holds for $n$ as well.

Therefore, $w_n$ is a strictly decreasing sequence from $n=N_0$ to $n=N(x)+1$. Taking
\[
v_n(x) = \frac 1{w_{N_0}(x)} w_n(x)
\]
it is immediate that $v(x)$ is a solution of \eqref{eigenx} obeying (a) and (b).

Noting that $v_{N_0+1}(x) \in [0,1]$ for all $x\in (x_0, 2)$, let $x_k \nearrow 2$ be a sequence such that $v_{N_0+1}(x_k)$ converges. Since $v_{N_0}(x)=1$ for all $x$, it follows by induction in $n$ that 
\[
\tilde v_n = \lim_{k\to\infty} v_n(x_k)
\]
converges for all $n$, and the limit is a solution of \eqref{eigenx} with $x=2$ such that $\tilde v_n \in [0,1]$ for all $n\ge N_0$ and $\tilde v_{N_0} = 1$. By Prop.~\ref{propeigensolutions2}(b), boundedness and a normalization condition determine the eigensolution at $x=2$ uniquely, so $\tilde v = v(2)$. Therefore, the limit is independent of subsequence $x_k \nearrow 2$, so by compactness of $[0,1]$, the limit as $x\nearrow 2$ exists.
\end{proof}

We remark that the chosen family of eigensolutions is discontinuous at values of $x$ at which $N(x)$ is discontinuous; however, it is continuous at $x=2$, and this is all that will be needed below.

The next step is a monotonicity statement.

\begin{lemma}\label{lemmaincreasedecrease}
For any $x \in (x_0, 2]$ and $N_0 \le n < N(x)$,
\[
a_n e^{-\gamma_n(x)} \le a_{n+1} e^{-\gamma_{n+1}(x)}, \qquad a_n e^{\gamma_n(x)} \ge a_{n+1} e^{\gamma_{n+1}(x)}.
\]
\end{lemma}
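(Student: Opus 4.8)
The plan is to recognize the quantities $a_n e^{\pm\gamma_n(x)}$ as the two roots of an explicit quadratic polynomial, and then to reduce both inequalities to a single comparison between the roots of consecutive such polynomials. Concretely, fix $x \in (x_0,2]$. For $N_0 \le n \le N(x)$ the quantity $\gamma_n(x)$ is defined and, by \eqref{gammanxdef}, $2 a_n \cosh\gamma_n(x) = x - b_n$; hence the numbers $M_n := a_n e^{-\gamma_n(x)}$ and $P_n := a_n e^{\gamma_n(x)}$ satisfy $M_n + P_n = x - b_n$ and $M_n P_n = a_n^2$, i.e.\ they are precisely the two roots of the monic quadratic $q_n(t) = t^2 - (x-b_n) t + a_n^2$, with $0 < M_n \le P_n$ since $\gamma_n(x) \ge 0$. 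In this language the assertion of the lemma is exactly that $M_n \le M_{n+1}$ and $P_{n+1} \le P_n$ whenever $N_0 \le n < N(x)$.

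First I would record that for $N_0 \le n < N(x)$ both indices $n$ and $n+1$ lie in $[N_0,N(x)]$, so $\gamma_n(x)$ and $\gamma_{n+1}(x)$ are both defined via \eqref{gammanxdef}; in particular $(x-b_n)^2 - 4a_n^2 \ge 0$ and $(x-b_{n+1})^2 - 4a_{n+1}^2 \ge 0$, so $M_n, P_n, M_{n+1}, P_{n+1}$ are genuine nonnegative reals. Then comes the key computation: since $q_n(t) - q_{n+1}(t) = (b_n - b_{n+1}) t + (a_n^2 - a_{n+1}^2)$, evaluating at a root $t \in \{M_{n+1}, P_{n+1}\}$ of $q_{n+1}$ gives $q_n(t) = (b_n - b_{n+1}) t + (a_n^2 - a_{n+1}^2) \le 0$, because $b_n \le b_{n+1}$ and $a_n \le a_{n+1}$ by \eqref{anbnincrease} (valid since $n \ge N_0$) and $t \ge 0$. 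Finally, $q_n$ is an upward-opening parabola with real roots $M_n \le P_n$, so $\{t : q_n(t) \le 0\} = [M_n, P_n]$; hence $M_{n+1}, P_{n+1} \in [M_n, P_n]$, which yields $M_n \le M_{n+1}$ and $P_{n+1} \le P_n$ simultaneously, i.e.\ the two displayed inequalities.

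The only thing requiring a little care — and what I would regard as the (mild) main obstacle — is the domain bookkeeping: one must make sure all of $\gamma_n(x)$, $\gamma_{n+1}(x)$ and the associated square roots are legitimate, which is precisely why one restricts to $n < N(x)$ rather than $n \le N(x)$, and one should check that the argument survives the degenerate case $\gamma_n(x) = 0$, in which $q_n$ has the double root $M_n = P_n = a_n$ and the interval $[M_n, P_n]$ collapses to the point $a_n$ (the inequalities then still hold, and in fact force $M_{n+1} = P_{n+1} = a_n$). Everything else is routine algebra.
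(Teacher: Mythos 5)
Your proof is correct, and it takes a genuinely different route from the paper's. The paper argues by monotonicity in two steps: since $a_n$ and $b_n$ are nondecreasing, $\tfrac{x-b_n}{2a_n}$ is nonincreasing, hence $\gamma_n(x)$ is nonincreasing and $a_n e^{-\gamma_n(x)}$ is a product of two nonnegative nondecreasing sequences, hence nondecreasing; the second inequality then follows from the identity $x - b_n = a_n e^{\gamma_n(x)} + a_n e^{-\gamma_n(x)}$ by writing $a_n e^{\gamma_n(x)}$ as $x - b_n$ minus a nondecreasing quantity. You instead exploit both symmetric functions at once --- the sum $x-b_n$ and the product $a_n^2$ --- by identifying $a_n e^{\mp\gamma_n(x)}$ as the roots of $q_n(t) = t^2 - (x-b_n)t + a_n^2$ and showing $q_n \le 0$ at the roots of $q_{n+1}$, so that the root intervals are nested. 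Your computation $q_n(t) - q_{n+1}(t) = (b_n - b_{n+1})t + (a_n^2 - a_{n+1}^2) \le 0$ for $t \ge 0$ is valid, the domain bookkeeping (both $n$ and $n+1$ lie in $[N_0, N(x)]$, so both $\gamma$'s are defined and the discriminants are nonnegative) is exactly right, and your treatment of the degenerate case $\gamma_n(x)=0$ is consistent: there $x = b_n + 2a_n$, and $n < N(x)$ forces $b_{n+1}+2a_{n+1} \le x$, hence $a_{n+1}=a_n$, $b_{n+1}=b_n$. What your approach buys is that both inequalities drop out of a single sign evaluation, with no need to first establish monotonicity of $\gamma_n$; what the paper's approach buys is that the intermediate fact that $\gamma_n(x)$ is nonincreasing is itself reused implicitly in the surrounding analysis (e.g.\ the analogous oscillatory-regime statement $\kappa_n \le \kappa_{n+1}$ is proved the same way), so the two displayed inequalities and the monotonicity of $\gamma_n$ come from one uniform observation.
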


\begin{proof}
Since $a_n, b_n$ are increasing sequences, $\frac{x-b_n}{a_n}$ is a decreasing sequence, so $\gamma_n(x)$ is a decreasing sequence. This implies that $a_n e^{-\gamma_n(x)}$ is an increasing sequence.

Since \eqref{gammanxdef} can be rewritten as
\[
x - b_n = a_n e^{\gamma_n(x)} + a_n e^{-\gamma_n(x)}
\]
and $b_n$ and $a_n e^{-\gamma_n(x)}$ are increasing sequences,
\[
a_n e^{\gamma_n(x)} = x - b_n - a_n e^{-\gamma_n(x)}
\]
is a decreasing sequence.
\end{proof}

At this point let us use $x-b_n = 2 a_n \cosh \gamma_n(x)$ to rewrite \eqref{eigenx} as
\[
u_{n+1} = e^{\gamma_n(x)} u_n + e^{-\gamma_n(x)} u_n - \frac{a_{n-1}}{a_n} u_{n-1}.
\]
Consider the solution of this recurrence with
\begin{equation}\label{initialN0}
u_{N_0-1} = 0, \qquad u_{N_0} = 1
\end{equation}
and introduce for $N_0 \le n \le N(x)+1$
\[
\Phi_n(x) = \exp\left( - \sum_{j=N_0}^{n-1} \gamma_j(x) \right)  u_{n}
\]
with the convention for $\Phi_{N_0-1}$ so that
\begin{equation}\label{initialphi}
\Phi_{N_0-1}(x) = 0, \qquad \Phi_{N_0}(x) = 1.
\end{equation}
Define also for $N_0 \le n \le N(x) + 1$
\[
\phi_n(x) = \Phi_n(x) - \frac{a_{n-1}}{a_{n}} e^{-\gamma_n(x) -\gamma_{n-1}(x)} \Phi_{n-1}(x).
\]
The recursion \eqref{eigenx} can be rewritten for $N_0 \le n \le N(x)$ as
\begin{equation}\label{recursionphi}
\Phi_{n+1}(x) =  \Phi_n (x) + e^{-2\gamma_n(x)} \Phi_n(x) - \frac{a_{n-1}}{a_n} e^{-\gamma_{n-1}(x) - \gamma_n(x)} \Phi_{n-1}(x),
\end{equation}
(note that for $n=N_0$, the undefined quantity $\gamma_{N_0 -1}(x)$ appears in some of the equations, but in a term multiplied by $\Phi_{N_0-1}(x)=0$ so this doesn't present any ambiguity; to avoid undefined quantities let us set $\gamma_{N_0-1}(x)=0$).

In the following proposition we use the argument of \cite{KreimerLastSimon09}, starting from $n=N_0$ instead of $n=1$; the estimates are proved in \cite[Section 3]{KreimerLastSimon09} for the case $b_n \equiv 0$ but, with the additional input of Lemma~\ref{lemmaincreasedecrease}, the arguments work in our level of generality.

\begin{prop}\label{prop24}
Let $(u_n)$ be the solution of \eqref{eigenx}, \eqref{initialN0}. Then
\begin{enumerate}[(a)]
\item For all $N_0 -1 \le n \le N(x)$,  $\Phi_{n+1}(x) \ge \Phi_n(x) \ge 0$.
\item For $N_0 \le n \le N(x)$, $\phi_{n+1}(x) \le \phi_n(x) \le 1$.
\item For all $N_0 \le n \le N(x)$, $\Phi_{n+1}(x) \le \Phi_n (x)+1$.
\item For all $N_0 \le n \le N(x)+1$,
\[
1 \le \Phi_n (x) \le n - N_0 + 1.
\]
\end{enumerate}
\end{prop}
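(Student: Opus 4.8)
The plan is to reduce everything to two algebraic identities obtained from \eqref{recursionphi} together with the definition of $\phi_n(x)$, and then to run two short inductions on $n$. First I would observe that \eqref{recursionphi} can be rewritten, using the definition of $\phi_n(x)$, as the compact recursion
\[
\Phi_{n+1}(x) = \phi_n(x) + e^{-2\gamma_n(x)}\Phi_n(x), \qquad N_0 \le n \le N(x),
\]
and that substituting this into the definition of $\phi_{n+1}(x)$ yields
\[
\phi_{n+1}(x) = \phi_n(x) + \frac{e^{-\gamma_n(x)}}{a_{n+1}}\bigl(a_{n+1}e^{-\gamma_n(x)} - a_n e^{-\gamma_{n+1}(x)}\bigr)\Phi_n(x).
\]
These two identities, the normalization \eqref{initialphi} (which, since $\Phi_{N_0-1}(x)=0$, also gives $\phi_{N_0}(x)=1$), and Lemma~\ref{lemmaincreasedecrease} are all that is needed.

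For part (a) I would induct on $n$ starting from $n=N_0-1$; the base case is immediate from \eqref{initialphi}, and for the inductive step I would rewrite \eqref{recursionphi} as
\[
\Phi_{n+1}(x) - \Phi_n(x) = \frac{e^{-\gamma_n(x)}}{a_n}\bigl(a_n e^{-\gamma_n(x)}\Phi_n(x) - a_{n-1}e^{-\gamma_{n-1}(x)}\Phi_{n-1}(x)\bigr).
\]
Since $a_n e^{-\gamma_n(x)}$ is increasing in $n$ by Lemma~\ref{lemmaincreasedecrease} and $0 \le \Phi_{n-1}(x) \le \Phi_n(x)$ by the inductive hypothesis, the right-hand side is $\ge 0$, so $\Phi_{n+1}(x) \ge \Phi_n(x) \ge 0$. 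For part (b) I would use the second identity above: Lemma~\ref{lemmaincreasedecrease} gives $a_n e^{\gamma_n(x)} \ge a_{n+1}e^{\gamma_{n+1}(x)}$, which upon multiplication by $e^{-\gamma_n(x)-\gamma_{n+1}(x)}$ becomes $a_{n+1}e^{-\gamma_n(x)} \le a_n e^{-\gamma_{n+1}(x)}$, so the bracketed factor is $\le 0$; combined with $\Phi_n(x) \ge 0$ from (a) this gives $\phi_{n+1}(x) \le \phi_n(x)$, hence $\phi_n(x) \le \phi_{N_0}(x) = 1$.

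Parts (c) and (d) are then immediate corollaries. For (c), the first identity together with $e^{-2\gamma_n(x)} \le 1$ (as $\gamma_n(x) \ge 0$), $\Phi_n(x) \ge 0$ from (a), and $\phi_n(x) \le 1$ from (b) gives $\Phi_{n+1}(x) = \phi_n(x) + e^{-2\gamma_n(x)}\Phi_n(x) \le 1 + \Phi_n(x)$. For (d), the lower bound $\Phi_n(x) \ge \Phi_{N_0}(x) = 1$ is part (a) plus \eqref{initialphi}, and the upper bound $\Phi_n(x) \le n-N_0+1$ follows by induction from (c) with base value $\Phi_{N_0}(x)=1$. I do not expect a genuine obstacle here: the conceptual content has already been placed in Lemma~\ref{lemmaincreasedecrease} and in the change of variables to $\Phi_n, \phi_n$, so the remaining work is routine. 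The only points requiring attention are the two ends of the index range --- at $n = N_0$ the symbol $\gamma_{N_0-1}(x)$ appears only multiplied by $\Phi_{N_0-1}(x)=0$, consistent with the convention $\gamma_{N_0-1}(x)=0$ fixed after \eqref{recursionphi}, and one must verify that each use of Lemma~\ref{lemmaincreasedecrease} stays within its stated range $N_0 \le n < N(x)$ --- but these are bookkeeping matters rather than real difficulties.
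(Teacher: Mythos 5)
Your proof is correct and follows essentially the same route as the paper: the same induction for (a) with the identical rearrangement of \eqref{recursionphi}, the same sign computation for $\phi_{n+1}-\phi_n$ via the second inequality of Lemma~\ref{lemmaincreasedecrease}, and the same derivation of (c) and (d) from (a) and (b) (your identity $\Phi_{n+1}=\phi_n+e^{-2\gamma_n}\Phi_n$ is a trivially equivalent variant of the paper's use of the definition of $\phi_{n+1}$). No gaps.
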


\begin{proof}
For readability, we supress the explicit dependence on $x$ from various functions of $x$ in this proof.

(a) is proved by induction on $n$; the base case $n=N_0-1$ follows from \eqref{initialphi}. For the inductive step, if $\Phi_n \ge \Phi_{n-1} \ge 0$, then rearranging \eqref{recursionphi} and using Lemma~\ref{lemmaincreasedecrease},
\[
\Phi_{n+1} - \Phi_n = \frac{ e^{-\gamma_n}}{a_n} \left( a_n e^{-\gamma_n} \Phi_n - a_{n-1} e^{-\gamma_{n-1}} \Phi_{n-1} \right) \ge 0.
\]

(b) It follows from \eqref{recursionphi} and Lemma~\ref{lemmaincreasedecrease} that
\[
\phi_{n+1} - \phi_n = \frac{e^{-\gamma_{n+1} -2\gamma_{n}} \Phi_n}{a_{n+1}} ( a_{n+1} e^{\gamma_{n+1}} - a_n e^{\gamma_n} ) \le 0.
\]
Therefore, $\phi_n \le \phi_{N_0} = 1$ for all $n\ge N_0$.

(c) follows from (b) as
\[
\Phi_{n+1} = \phi_{n+1} + \frac{a_n}{a_{n+1}} e^{-\gamma_{n+1} - \gamma_n} \Phi_n \le \phi_{n+1} + \Phi_n \le 1 + \Phi_n.
\]

(d) $\Phi_n \ge 1$ follows from (a) and $\Phi_{N_0} = 1$. $\Phi_n \le n- N_0 + 1$ follows by induction from (c).
\end{proof}

\begin{proof}[Proof of Theorem \ref{thm1}]
(a) was proved in Prop.~\ref{propeigensolutions2}, so it remains to prove (b). 
The proof uses estimates on $u_{n+1}$ obtained from Prop.~\ref{prop24},
\[
e^{\sum_{j=N_0}^{n} \gamma_j} \le u_{n+1} \le (n-N_0+2) e^{\sum_{j=N_0}^{n} \gamma_j},
\]
and Wronskian considerations. We recall that the Wronskian of $u, v$,
\[
W = a_n (u_{n+1} v_n - u_n v_{n+1}),
\]
is independent of $n$ by a direct calculation. Taking $n=N_0-1$ we see that $W = a_{N_0-1} v_{N_0-1} > 0$. For any $n$,
\[
a_n u_{n+1} v_n \ge W
\]
so
\[
v_n \ge \frac{W}{a_n} \frac 1{u_{n+1}} \ge \frac W{a_n} \frac 1{n-N_0+2} e^{-\sum_{j=N_0}^n \gamma_j} 
\]
and to have the lower estimate in \eqref{decayingsolnasymptotics} for all $n\ge N_0$ we can take
\[
C_1 = \inf_{n\ge N_0}  \frac W{a_n} \frac n{n-N_0+2}.
\]
By monotonicity of $v$, $u_n v_{n+1} \le u_n v_{n-1}$, so
\[
u_{n+1} v_n - u_n v_{n-1} \le u_{n+1} v_n - u_n v_{n+1} = \frac W{a_n}.
\]
Summing in $n$ from $N_0$ and using $u_{N_0} v_{N_0-1} = W / a_{N_0-1}$ gives
\[
u_{n+1} v_n \le \sum_{j=N_0-1}^{n} \frac W{a_j},
\]
so
\[
v_n \le e^{-\sum_{j=N_0}^n \gamma_j} \sum_{j=N_0-1}^{n} \frac W{a_j}
\]
and to have the upper estimate in \eqref{decayingsolnasymptotics} for all $n\ge N_0$ we can take
\[
C_2 = \sup_{n\ge N_0} \frac 1n \sum_{j=N_0-1}^n \frac W{a_j}.
\]
The constants $C_1, C_2$ are in $(0,\infty)$ because
\[
\lim_{n\to\infty} \frac W{a_n} \frac{n}{n-N_0+2} = W = \lim_{n\to\infty} \frac 1n \sum_{j=N_0-1}^n \frac W{a_j}
\]
(since $a_n \to 1$ and by a Ces\`aro averaging for the second limit).
\end{proof}

The following proposition is the final step in controlling the non-oscillatory regime for Theorem~\ref{thm2}: part (b) is the main part of this proposition, but part (a) is needed to cover some special cases. Define
\begin{equation}\label{gxsum}
g(x) = \sum_{j=N_0}^{N(x)} \gamma_j(x).
\end{equation}

\begin{prop}\label{prop29}
\begin{enumerate}[(a)]
\item There exists $x_1 < 2$ and $C \in (0,\infty)$ such that for all $x \in [x_1,2]$, with $N=N(x)$,
\begin{equation}\label{estimatesq9}
\left( a_N^2 p_{N-1}^2 + p_N^2 \right)^{\pm 1} \le  C N^2 e^{2g}.
\end{equation}
\item If $(p_{n-1}(2))_{n=1}^\infty$ is not a subordinate solution at $x=2$, there exists $x_1 < 2$ and $C \in (0,\infty)$ such that for all $x \in [x_1,2]$, with $N=N(x)$,
\begin{equation}\label{estimatesq1}
\left( \frac{a_N^2 p_{N-1}^2 + p_N^2}{e^{2g}} \right)^{\pm 1} \le  C N^2.
\end{equation}
\end{enumerate}
\end{prop}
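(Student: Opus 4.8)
\emph{Proof proposal.} The plan is to express the Dirichlet solution $(p_{n-1}(x))_{n=1}^\infty$ in a basis of two distinguished solutions of \eqref{eigenx} — one growing, one decaying — and read off the estimates at the turning point. The two solutions are $u=u(x)$, the solution of \eqref{eigenx}, \eqref{initialN0} controlled by Proposition~\ref{prop24}, and $v=v(x)$, the solution furnished by Proposition~\ref{propsubordinatex}. First I would note that $u,v$ are linearly independent on $(x_0,2]$: since $u_{N_0}(x)=v_{N_0}(x)=1$ while $u_{N_0+1}(x)=(x-b_{N_0})/a_{N_0}>2>1>v_{N_0+1}(x)$ (the last inequality by Proposition~\ref{propsubordinatex}(a),(b), the first by the definition of $x_0$), their Wronskian $W(x)=a_{N_0}\bigl(u_{N_0+1}(x)-v_{N_0+1}(x)\bigr)$ satisfies $a_{N_0}\le W(x)\le 2-b_{N_0}$, hence is bounded away from $0$ and $\infty$. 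Writing $p_{n-1}(x)=A(x)u_n(x)+B(x)v_n(x)$ and solving the $2\times2$ system at $n=N_0,N_0+1$ gives $A(x)=\bigl(p_{N_0}(x)-v_{N_0+1}(x)p_{N_0-1}(x)\bigr)/\bigl(u_{N_0+1}(x)-v_{N_0+1}(x)\bigr)$ and $B(x)=p_{N_0-1}(x)-A(x)$; using $v_{N_0+1}(x)\in[0,1]$ and boundedness of the polynomials $p_{N_0-1},p_{N_0}$ on $[x_0,2]$ one gets $|A(x)|,|B(x)|\le M$, and using Proposition~\ref{propsubordinatex}(c) (so $v_{N_0+1}(x)\to v_{N_0+1}(2)$) one gets $A(x)\to A(2)$, $B(x)\to B(2)$ as $x\nearrow 2$, with $(A(2),B(2))\neq(0,0)$ since $p_0(2)=1$.

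Next I would specialize to the turning point. By the construction in Proposition~\ref{propsubordinatex}, $v_{N(x)+1}(x)=0$, so, with $N=N(x)$ and $g=g(x)$ as in \eqref{gxsum},
\[
p_N(x)=A(x)\,u_{N+1}(x),\qquad p_{N-1}(x)=A(x)\,u_N(x)+B(x)\,v_N(x),
\]
and Proposition~\ref{prop24}(d), together with $\gamma_N(x)\ge0$, gives $e^{g}\le u_{N+1}(x)\le(N-N_0+2)e^{g}$ and $u_N(x)\le(N-N_0+1)e^{g}$, while $0<v_N(x)\le1$ by Proposition~\ref{propsubordinatex}(b). The \emph{upper} estimates in both \eqref{estimatesq9} and \eqref{estimatesq1} are then immediate: $|p_N(x)|\le M(N-N_0+2)e^g$ and $|p_{N-1}(x)|\le M(N-N_0+1)e^g+M$, so, since $a_n$ is bounded, $a_N^2p_{N-1}^2+p_N^2\le CN^2e^{2g}$ for $x$ near $2$.

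It remains to prove the two \emph{lower} estimates. For part (b): if $A(2)$ were $0$, then $(p_{n-1}(2))_n=B(2)(v_n(2))_n$ would be a nonzero multiple of $v(2)$, hence a subordinate solution at $x=2$ by Proposition~\ref{propeigensolutions2}(c), contradicting the hypothesis; so $A(2)\neq0$ and $|A(x)|\ge\delta>0$ for $x$ near $2$. Then $|p_N(x)|=|A(x)|u_{N+1}(x)\ge\delta e^{g}$, so $a_N^2p_{N-1}^2+p_N^2\ge\delta^2e^{2g}$, which is the $(\cdot)^{-1}$ estimate in \eqref{estimatesq1}. For part (a), where no subordinacy hypothesis is available, I would use Wronskian identities: evaluating the $n$-independent quantities $a_n(v_{n+1}p_{n-1}-v_np_n)=-A(x)W(x)$ and $a_n(u_{n+1}p_{n-1}-u_np_n)=B(x)W(x)$ at $n=N$ yields $a_Nv_N|p_N|=|A(x)|W(x)$ and $a_N|u_{N+1}p_{N-1}-u_Np_N|=|B(x)|W(x)$; adding and using $v_N\le1$, $u_N\le u_{N+1}$, $u_{N+1}\ge1$ gives
\[
a_N\,u_{N+1}\bigl(|p_{N-1}|+2|p_N|\bigr)\ \ge\ \bigl(|A(x)|+|B(x)|\bigr)W(x)\ \ge\ \delta_0\inf_x W(x)>0
\]
for $x\in[x_1,2]$, using $|A(x)|+|B(x)|\ge\delta_0>0$ near $2$ from the first paragraph. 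Since $u_{N+1}\le(N-N_0+2)e^g$ and $a_n$ is bounded, this forces $|p_{N-1}|+2|p_N|\ge cN^{-1}e^{-g}$, hence $a_N^2p_{N-1}^2+p_N^2\ge c'N^{-2}e^{-2g}$ for $x$ near $2$, which is the $(\cdot)^{-1}$ estimate in \eqref{estimatesq9}.

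The main obstacle, I expect, is the lower bounds, especially the sharp one in part (b): everything there rests on the non-vanishing of $A(2)$, which in turn uses the continuity of the family $v(x)$ at $x=2$ (Proposition~\ref{propsubordinatex}(c)) and the identification, via the one-dimensionality of the subordinate subspace together with Proposition~\ref{propeigensolutions2}(c), of the condition ``$A(2)=0$'' with subordinacy of the Dirichlet solution at $x=2$. The rest — extracting $A(x),B(x)$, the Wronskian bookkeeping, and tracking the $e^{\pm2g}$ and polynomial prefactors coming from Proposition~\ref{prop24} — is routine.
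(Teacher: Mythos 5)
Your proposal is correct, and its skeleton is the same as the paper's: decompose the Dirichlet solution as $p_{n-1}=A u_n+B v_n$ in the basis of the growing solution from Proposition~\ref{prop24} and the decaying solution from Proposition~\ref{propsubordinatex}, note that $A,B$ are continuous at $x=2$ and not both zero, and identify ``$A(2)=0$'' with subordinacy of the Dirichlet solution via Proposition~\ref{propeigensolutions2}(c). Where you genuinely diverge is in how the lower bound of \eqref{estimatesq1} is extracted. The paper compares the sizes of the two terms $\alpha u_n$ and $\beta v_n$ directly, which forces a case distinction on whether $\sum_j\gamma_j(2)$ converges and, in the divergent case, a choice of $N_1$ with $e^{\sum_{j=N_0}^{N_1-1}\gamma_j(2)}>|2\beta(2)/\alpha(2)|$ so that the $u$-term dominates from $N_1$ onward. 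You instead observe that, by the construction of $v$ in Proposition~\ref{propsubordinatex}, $v_{N(x)+1}(x)=0$, so $p_{N}=A\,u_{N+1}$ \emph{exactly} at the turning point and the lower bound $|p_N|\ge\delta e^{g}$ is immediate; this eliminates the case distinction entirely. (Do note that $v_{N(x)+1}(x)=0$ is a feature of the construction rather than of the stated conclusions (a)--(c) of Proposition~\ref{propsubordinatex}, so you should either cite the construction explicitly, as you do, or add it to that proposition's statement.) For part (a) you use Wronskian/Cramer identities where the paper uses $\lVert T_n^{-1}\rVert=\lVert T_n\rVert/|\det T_n|$ for $2\times2$ matrices; these are the same computation in different notation. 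Net effect: your route proves the same estimates with a shorter and more transparent part (b), at the small cost of leaning on an unstated property of the auxiliary family $v(x)$.
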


\begin{proof}
In this proof we will use a common convention that $C$ stands for different constants in $(0,\infty)$ from one line to the next.

Recall for $x \in (x_0,2]$ the special eigensolutions of \eqref{eigenx}, $u_n(x)$ determined by \eqref{initialN0} and $v_n(x)$ determined by Prop.~\ref{propsubordinatex}. The two solutions are linearly independent, so the eigensolution $(p_{n-1}(x))_{n=1}^\infty$ can be written as their linear combination:
\[
p_{n-1}(x) = \alpha(x) u_n(x) + \beta(x) v_n(x),
\]
for some $\alpha(x), \beta(x) \in \mathbb{R}$. The conditions at two consecutive indices can be written as a $2\times 2$ system for $\alpha, \beta$,
\begin{equation}\label{ml567}
\begin{pmatrix} p_{n} \\ a_n p_{n - 1} \end{pmatrix}
=
T_n \begin{pmatrix} \alpha \\ \beta \end{pmatrix}, \qquad T_n = \begin{pmatrix} u_{n+1} & v_{n+1} \\ a_n u_{n} & a_n v_{n} \end{pmatrix}.
\end{equation}
By Prop.~\ref{propsubordinatex}, $v_n(x)$ is continuous at $x=2$, and the same is obviously true of $p_{n-1}(x)$, $u_n(x)$. By linear independence of $u(x)$ and $v(x)$, the matrix $T_n$ is invertible for all $x$, and since it is continuous at $x=2$, so is its inverse, so $\alpha(x), \beta(x)$ are continuous at $x=2$.

Since the determinant $\det T_n$ is independent of $n$, non-zero and continuous in $x$,
\[
C^{-1} \le \det T_n \le C
\]
in some interval $[x_1,2]$, $x_1 < 2$. Therefore, bounds on the norm of $T_n$ will also yield bounds on the norm of its inverse, since for $2\times 2$ matrices,
\[
\lVert T_n^{-1} \rVert = \frac 1{\lvert \det T_n \rvert} \lVert T_n \rVert.
\]
Since Prop.~\ref{prop24} implies
\begin{equation}\label{ml5}
e^{\sum_{j=N_0}^{n-1} \gamma_j}  \le u_n(x) \le (n - N_0 + 1)e^{\sum_{j=N_0}^{n-1} \gamma_j}
\end{equation}
and $v_n(x)$ are uniformly bounded, we obtain bounds on $\lVert T_n \rVert$ and then $\lVert T_n^{-1}\rVert$,
\[
\lVert T_n^{\pm 1} \rVert \le C n e^{\sum_{j=N_0}^{n} \gamma_j}.
\]
Since $\alpha^2 + \beta^2$ is non-zero and continuous in $x$,
\[
C^{-1} \le \alpha^2 + \beta^2 \le C,
\]
so \eqref{estimatesq9} follows from \eqref{ml567}.

To prove (b), it is necessary to separate cases. Let us first consider the case
\[
\sum_{j=N_0}^\infty \gamma_j(2) < \infty.
\]
In that case, since $\gamma_j(x) < \gamma_j(2)$ for $x \in (x_0,2)$, $g(x)$ is uniformly bounded as $x\nearrow 2$ so \eqref{estimatesq9} implies \eqref{estimatesq1}. Therefore, assume from now on that
\[
\sum_{j=N_0}^\infty \gamma_j(2) = \infty.
\]
Here we use the assumption that $p_{n-1}(2)$ is not a subordinate solution to conclude that $\alpha(2) \neq 0$. Thus, we can pick $N_1 \ge N_0$ such that
\[
e^{\sum_{j=N_0}^{N_1-1} \gamma_j(2)} >  \left \lvert\frac{2\beta(2)}{\alpha(2)} \right\rvert
\]
By continuity, there exists $x_1 \in (x_0,2)$ such that for all $x \in [x_1, 2]$, $N(x) \ge N_1$ and
\[
e^{\sum_{j=N_0}^{N_1-1} \gamma_j(x)} >  \left \lvert \frac{2\beta(x)}{\alpha(x)}  \right\rvert.
\]
Therefore, whenever $x \in [x_1,2]$ and $N_1 \le n \le N(x)+1$, by Prop.~\ref{propsubordinatex},
\begin{equation}\label{betavn}
0 \le \lvert \beta(x) \rvert v_n(x) \le \lvert \beta(x)\rvert \le \frac{\lvert\alpha(x)\rvert}2 e^{\sum_{j=N_0}^{N_1-1} \gamma_j(x)} \le \frac{\lvert\alpha(x)\rvert}2 e^{\sum_{j=N_0}^{n-1} \gamma_j(x)},
\end{equation}
Multiplying \eqref{ml5} by $\alpha(x)$ and combining with \eqref{betavn} by the triangle inequality, we conclude that
\begin{equation}\label{estimatepn}
\frac {\lvert\alpha(x)\rvert}2 e^{\sum_{j=N_0}^{n-1} \gamma_j(x)} \le \lvert p_{n-1}(x) \rvert \le \lvert\alpha(x)\rvert (n - N_0 + \tfrac 32)  e^{\sum_{j=N_0}^{n-1} \gamma_j(x)}.
\end{equation}
Using \eqref{estimatepn} for $n=N(x)$ and $n=N(x)+1$ gives
\begin{equation}\label{estimatesq0}
 C^{-1} e^{2g} \le a_N^2 p_{N-1}^2 + p_N^2 \le  C N^2 e^{2g},
\end{equation}
which implies \eqref{estimatesq1} for $x\in [x_1,2]$.
\end{proof}

Now we need to control the oscillatory part of the solutions. Here we can follow the method of \cite{KreimerLastSimon09}, with modifications to allow both nontrivial sequences of diagonal and off-diagonal Jacobi coefficients; indeed, the proofs of Lemma~\ref{lemma25}, Prop.~\ref{prop27}, and Lemma~\ref{lemma28} are straightforward generalizations of arguments from \cite{KreimerLastSimon09}.

For $n \ge N(x) +1$, define
\[
\kappa_n(x) = \arccos \frac{x-b_n}{2a_n}.
\]
Since $0 < \frac{x - b_n}{2a_n} < 1$, $\kappa_n(x) \in (0,\pi/2)$. We also define
\[
\kappa_\infty(x) = \lim_{n\to\infty} \kappa_n(x) = \arccos \frac x2.
\]

\begin{lemma}\label{lemma25}
As $x \nearrow 2$, with $N = N(x)$,
\[
\kappa_{N+2}(x)^{-2} = O((b_{N+2} - b_{N+1} + a_{N+2} - a_{N+1})^{-1}).
\]
\end{lemma}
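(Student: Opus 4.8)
The plan is to reduce the estimate to the elementary inequality $\theta^2 \ge 2(1-\cos\theta)$ — valid for all real $\theta$, since $\theta \mapsto \theta^2 - 2 + 2\cos\theta$ is even, vanishes at $0$, and has derivative $2\theta - 2\sin\theta \ge 0$ on $[0,\infty)$ — and then to feed in the defining property of the turning point $N(x)$ together with the monotonicity of $a_n$ and $b_n$. Throughout we restrict to $x \in (x_0,2)$, where $N(x)$ is defined and, by the standing setup, $\kappa_{N+2}(x)\in(0,\pi/2)$; we set $N=N(x)$.

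First I would apply the inequality with $\theta = \kappa_{N+2}(x)$, using $\cos\kappa_{N+2}(x) = \tfrac{x-b_{N+2}}{2a_{N+2}}$, to get
\[
\kappa_{N+2}(x)^2 \;\ge\; 2\Bigl(1 - \frac{x-b_{N+2}}{2a_{N+2}}\Bigr) \;=\; \frac{2a_{N+2}+b_{N+2}-x}{a_{N+2}}.
\]
Since $N+2 > N_0$ and $a_n$ is increasing with limit $1$, we have $0 < a_{N+2}\le 1$, so once the numerator is known to be nonnegative this yields $\kappa_{N+2}(x)^2 \ge 2a_{N+2}+b_{N+2}-x$. Next I would invoke the definition of the turning point: $N(x)=N$ forces $b_{N+1}+2a_{N+1}>x$, whence
\[
2a_{N+2}+b_{N+2}-x \;>\; 2(a_{N+2}-a_{N+1}) + (b_{N+2}-b_{N+1}) \;\ge\; (a_{N+2}-a_{N+1}) + (b_{N+2}-b_{N+1}) \;\ge\; 0,
\]
where the middle inequality uses $a_{N+2}\ge a_{N+1}$ (monotonicity, as $N+1\ge N_0$) and the last uses $b_{N+2}\ge b_{N+1}$ similarly. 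Chaining the two displays gives $\kappa_{N+2}(x)^2 > b_{N+2}-b_{N+1}+a_{N+2}-a_{N+1}$, i.e.\ $\kappa_{N+2}(x)^{-2} < (b_{N+2}-b_{N+1}+a_{N+2}-a_{N+1})^{-1}$, which is the claimed bound — in fact with implied constant $1$ and uniformly in $x$, not merely in the limit $x\nearrow 2$.

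I do not expect any real obstacle; this is a short deterministic estimate whose only moving parts are the convexity bound for $1-\cos\theta$ and the two monotonicity/turning-point inputs. The one point deserving a word of care is the degenerate possibility $a_{N+1}=a_{N+2}$ and $b_{N+1}=b_{N+2}$, for which the right-hand side of the asserted inequality is $+\infty$ and the statement is vacuous; the argument above still produces the (then trivial) bound $\kappa_{N+2}(x)^2 \ge 0$, so nothing breaks. It is also harmless to note, from $a_n\to 1$ and $b_n\to 0$, that $\kappa_{N+2}(x)\to 0$ as $x\nearrow 2$, which confirms that the regime in which the estimate is used is genuinely the small-$\kappa$ regime, though this fact is not needed in the proof.
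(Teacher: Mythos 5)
Your argument is correct and is essentially the paper's own proof: both rest on the elementary bound $2(1-\cos\theta)\le\theta^2$ applied at $\theta=\kappa_{N+2}(x)$, combined with the defining inequality $b_{N+1}+2a_{N+1}>x$ of the turning point and the monotonicity of $a_n$, $b_n$. The only (immaterial) difference is that you discard the factor $2$ on $a_{N+2}-a_{N+1}$ and use $a_{N+2}\le 1$ to get implied constant $1$, whereas the paper keeps the factor $2$ and absorbs $1/a_{N+2}$ into the constant.
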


\begin{proof}
By definition of $N$,  $\frac{x-b_{N+1}}{a_{N+1}} < 2$, so
\[
2 - 2 \cos \kappa_{N+2} = 2 - \frac{x-b_{N+2}}{a_{N+2}} > \frac{b_{N+2} - b_{N+1} + 2 a_{N+2} - 2 a_{N+1}}{a_{N+2}}.
\]
Since $2 - 2 \cos t \le t^2$ for $t \in (0,\pi/2)$ and $a_{N+2} \ge a_{N_0}$, we conclude that
\[
\kappa_{N+2}(x)^2 \ge \frac 1{a_{N_0}} (b_{N+2} - b_{N+1} + 2 a_{N+2} - 2 a_{N+1}),
\]
which concludes the proof.
\end{proof}

\begin{lemma} For all $x \in (x_0, 2)$ and all $n > N(x)$,
\begin{equation}\label{lemma26a}
\kappa_n \le \kappa_{n+1},
\end{equation}
\begin{equation}\label{lemma26b}
\frac{\cos \kappa_{n+1}}{\cos \kappa_n} \le \frac{a_n}{a_{n+1}} \le 1,
\end{equation}
\begin{equation}\label{lemma26c}
\left\lvert e^{i (\kappa_{n+1} - \kappa_n)} - \frac{a_n}{a_{n+1}} \right\rvert \le  \frac {\kappa_{n+1} - \kappa_n}{\cos \kappa_n}.
\end{equation}
\end{lemma}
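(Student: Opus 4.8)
The plan is to prove the three inequalities separately, using throughout the explicit formula $\cos \kappa_n(x) = \frac{x-b_n}{2a_n}$ and the monotonicity $b_n \le b_{n+1}$, $0 < a_n \le a_{n+1}$ (valid since $n > N(x) \ge N_0$), together with the fact that $\kappa_n(x) \in (0,\pi/2)$ so that $\cos\kappa_n > 0$.

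For \eqref{lemma26a}: the numerator $x-b_n$ is nonincreasing and positive and the denominator $2a_n$ is nondecreasing and positive, so $\frac{x-b_n}{2a_n}$ is nonincreasing in $n$; since $\arccos$ is decreasing on $[0,1]$, this gives $\kappa_n \le \kappa_{n+1}$. For \eqref{lemma26b}: I would write the identity
\[
\frac{\cos \kappa_{n+1}}{\cos \kappa_n} = \frac{a_n}{a_{n+1}} \cdot \frac{x-b_{n+1}}{x-b_n},
\]
where the last factor lies in $(0,1]$ because $0 < x - b_{n+1} \le x - b_n$, while $\frac{a_n}{a_{n+1}} \le 1$ is immediate from monotonicity.

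The substantive inequality is \eqref{lemma26c}. Set $\delta = \kappa_{n+1}-\kappa_n$, which lies in $[0,\pi/2)$ by \eqref{lemma26a}, and $r = \frac{a_n}{a_{n+1}}$, which by \eqref{lemma26b} lies in $\left[\frac{\cos\kappa_{n+1}}{\cos\kappa_n},\, 1\right]$. Since $r \mapsto \lvert e^{i\delta} - r\rvert^2 = r^2 - 2r\cos\delta + 1$ is convex in the real variable $r$, its maximum over that interval is attained at an endpoint, so it is enough to verify $\lvert e^{i\delta} - r\rvert \le \frac{\delta}{\cos\kappa_n}$ at $r=1$ and at $r = \frac{\cos\kappa_{n+1}}{\cos\kappa_n}$. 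At $r=1$ one has $\lvert e^{i\delta} - 1\rvert = 2\sin(\delta/2) \le \delta \le \frac{\delta}{\cos\kappa_n}$. At the other endpoint I would invoke the elementary identity
\[
\cos\kappa_n \, e^{i\delta} - \cos\kappa_{n+1} = i\, e^{-i\kappa_n}\sin\delta,
\]
obtained by writing both cosines in exponential form, which gives $\left\lvert e^{i\delta} - \frac{\cos\kappa_{n+1}}{\cos\kappa_n}\right\rvert = \frac{\sin\delta}{\cos\kappa_n} \le \frac{\delta}{\cos\kappa_n}$. Thus the bound holds at both endpoints, hence on the whole interval by convexity, in particular at $r = \frac{a_n}{a_{n+1}}$, which is \eqref{lemma26c}.

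The only delicate point is \eqref{lemma26c}: one cannot simply use the triangle inequality $\lvert e^{i\delta}-r\rvert \le \lvert e^{i\delta} - \frac{\cos\kappa_{n+1}}{\cos\kappa_n}\rvert + \lvert r - \frac{\cos\kappa_{n+1}}{\cos\kappa_n}\rvert$, since that extra term is of size $\cos\kappa_n - \cos\kappa_{n+1}$, which is comparable to $\delta$ (not to $\delta - \sin\delta$) when $\kappa_n$ is near $\pi/2$, and so the estimate would be lost; the convexity reduction to the two endpoints is what makes the argument work. Everything else is monotonicity bookkeeping and the standard inequalities $\sin t \le t$ and $2\sin(t/2)\le t$. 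In the special case $b_n\equiv 0$ treated in \cite{KreimerLastSimon09}, the interval for $r$ degenerates to the single point $\frac{\cos\kappa_{n+1}}{\cos\kappa_n} = \frac{a_n}{a_{n+1}}$, so only the identity is needed and the convexity step is unnecessary.
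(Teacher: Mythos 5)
Your proposal is correct and follows essentially the same route as the paper: monotonicity of $\frac{x-b_n}{2a_n}$ for \eqref{lemma26a}, the identity $a_{n+1}\cos\kappa_{n+1}=\frac{x-b_{n+1}}{2}\le\frac{x-b_n}{2}=a_n\cos\kappa_n$ for \eqref{lemma26b}, and for \eqref{lemma26c} a convexity reduction of $r\mapsto\lvert e^{i\delta}-r\rvert$ to the two endpoints $r=1$ and $r=\cos\kappa_{n+1}/\cos\kappa_n$, with the second endpoint handled by the same exponential identity (the paper writes it as $\frac{1}{2\cos\kappa_n}\lvert e^{-2i\kappa_n}-e^{-2i\kappa_{n+1}}\rvert\le\frac{\kappa_{n+1}-\kappa_n}{\cos\kappa_n}$, which is your $\frac{\sin\delta}{\cos\kappa_n}$ bound). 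Your closing remark about why a naive triangle-inequality splitting would lose the estimate near $\kappa_n=\pi/2$ is accurate and a useful observation, but the argument itself coincides with the paper's.
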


\begin{proof}
Since $a_n$, $b_n$ are increasing sequences, $\frac{x-b_n}{2a_n}$ is decreasing, so $\arccos\frac{x-b_n}{2a_n}$ is increasing, which is \eqref{lemma26a}.

\eqref{lemma26b} follows from $a_{n+1} \cos\kappa_{n+1} = \frac{x-b_{n+1}}2 \le \frac{x-b_n}2 = a_n \cos \kappa_n$ and from $a_n \le a_{n+1}$.

By convexity of the absolute value and \eqref{lemma26b},
\[
\left\lvert e^{i (\kappa_{n+1} - \kappa_n)}  - \frac{a_n}{a_{n+1}} \right\rvert \le \max\left(
\left\lvert e^{i (\kappa_{n+1} - \kappa_n)} - \frac{\cos \kappa_{n+1}}{\cos \kappa_n}  \right\rvert,
\left\lvert e^{i (\kappa_{n+1} - \kappa_n)}  - 1 \right\rvert
\right)
\]
The proof of \eqref{lemma26c} is completed by estimating both of the quantities on the right,
\begin{align*}
\left\lvert e^{i (\kappa_{n+1} - \kappa_n)}  - \frac{\cos \kappa_{n+1}}{\cos \kappa_n} \right\rvert
& =  \frac 1{\cos \kappa_n} \left\lvert e^{-i\kappa_n} \cos \kappa_n - \cos \kappa_{n+1} e^{-i\kappa_{n+1}} \right\rvert \\
& =  \frac 1{\cos \kappa_n} \left\lvert \frac{ e^{-2i\kappa_n} - e^{-2i\kappa_{n+1}}} 2\right\rvert \\
& \le \frac{\kappa_{n+1} -\kappa_n}{\cos \kappa_n}
\end{align*}
and
\[
\left\lvert e^{i (\kappa_{n+1} - \kappa_n)}  - 1 \right\rvert = 2 \sin \frac{\kappa_{n+1} - \kappa_n} 2 \le \kappa_{n+1} - \kappa_n \le \frac{\kappa_{n+1} - \kappa_n}{\cos \kappa_n}. \qedhere
\]
\end{proof}

Using $\kappa_n$, \eqref{eigenx} can be rewritten for $n> N(x)$ as
\[
u_{n+1} = 2 u_n \cos \kappa_n  - \frac{a_{n-1}}{a_n} u_{n-1}
\]
Define, for $n > N(x)$,
\[
Y_{n} = u_{n+1} - e^{-i\kappa_{n}} u_{n}.
\]

\begin{prop}\label{prop27}
For any solution $u$ of \eqref{eigenx}, for all $x\in (x_0, 2)$ and all $n > N(x)$,
\begin{equation}\label{prop27a}
\lvert u_n\rvert \le \frac{\lvert Y_{n}\rvert}{\sin \kappa_{n}},
\end{equation}
\begin{equation}\label{prop27b}
\frac 12 \lvert Y_{n}\rvert^2 \le u_n^2 + u_{n+1}^2 \le \frac 2{\sin^2\kappa_n} \lvert Y_{n} \rvert^2,
\end{equation}
\begin{equation}\label{prop27c}
\left\lvert \frac{\lvert Y_{n+1} \rvert}{\lvert Y_{n} \rvert} - 1 \right\rvert \le  \frac{\kappa_{n+1} - \kappa_{n}}{\sin \kappa_{n} \cos \kappa_{n}}.
\end{equation}
\end{prop}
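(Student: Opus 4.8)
The plan is to exploit, throughout, that $u$ is a \emph{real} solution while $Y_n = u_{n+1} - e^{-i\kappa_n} u_n$ is genuinely complex, so that the imaginary part of $Y_n$ recovers $u_n$ up to the factor $\sin\kappa_n$. Concretely, since $e^{-i\kappa_n} = \cos\kappa_n - i\sin\kappa_n$ and $u_n, u_{n+1}\in\mathbb{R}$, one has $\Im Y_n = u_n \sin\kappa_n$, hence $\lvert u_n\rvert \sin\kappa_n = \lvert \Im Y_n\rvert \le \lvert Y_n\rvert$; since $\kappa_n\in(0,\pi/2)$ gives $\sin\kappa_n>0$, this is exactly \eqref{prop27a}. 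For nontrivial $u$ this also shows $Y_n\neq 0$ for all $n$ (if $Y_n=0$ then $u_n=0$ and then $u_{n+1}=e^{-i\kappa_n}u_n=0$, forcing $u\equiv 0$), which we will need in the last part.

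For \eqref{prop27b} I would start from the identity obtained by separating real and imaginary parts,
\[
\lvert Y_n\rvert^2 = (u_{n+1} - u_n\cos\kappa_n)^2 + u_n^2\sin^2\kappa_n = u_n^2 + u_{n+1}^2 - 2 u_n u_{n+1}\cos\kappa_n .
\]
The lower bound $\tfrac12\lvert Y_n\rvert^2 \le u_n^2 + u_{n+1}^2$ then follows from $\lvert 2 u_n u_{n+1}\cos\kappa_n\rvert \le u_n^2 + u_{n+1}^2$. For the upper bound, since $\cos\kappa_n\in(0,1)$ we get $-2u_nu_{n+1}\cos\kappa_n \ge -\cos\kappa_n(u_n^2+u_{n+1}^2)$, hence $\lvert Y_n\rvert^2 \ge (1-\cos\kappa_n)(u_n^2+u_{n+1}^2)$; combining with $1-\cos\kappa_n = 2\sin^2(\kappa_n/2) \ge \tfrac12\sin^2\kappa_n$ (because $\sin\kappa_n = 2\sin(\kappa_n/2)\cos(\kappa_n/2)$ and $\cos(\kappa_n/2)\le1$) yields $u_n^2 + u_{n+1}^2 \le \tfrac{2}{\sin^2\kappa_n}\lvert Y_n\rvert^2$.

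For \eqref{prop27c}, the key algebraic step is to compute $Y_{n+1}$ in terms of $Y_n$. Using the recursion in the form $u_{n+2} = 2 u_{n+1}\cos\kappa_{n+1} - \tfrac{a_n}{a_{n+1}} u_n$ together with the identity $2\cos\kappa_{n+1} - e^{-i\kappa_{n+1}} = e^{i\kappa_{n+1}}$, one gets
\[
Y_{n+1} = u_{n+2} - e^{-i\kappa_{n+1}}u_{n+1} = e^{i\kappa_{n+1}}u_{n+1} - \tfrac{a_n}{a_{n+1}}u_n ,
\]
while $e^{i\kappa_{n+1}}Y_n = e^{i\kappa_{n+1}}u_{n+1} - e^{i(\kappa_{n+1}-\kappa_n)}u_n$, so that
\[
Y_{n+1} - e^{i\kappa_{n+1}}Y_n = \Bigl( e^{i(\kappa_{n+1}-\kappa_n)} - \tfrac{a_n}{a_{n+1}}\Bigr) u_n .
\]
Now \eqref{lemma26c} bounds the bracket by $(\kappa_{n+1}-\kappa_n)/\cos\kappa_n$ and \eqref{prop27a} bounds $\lvert u_n\rvert$ by $\lvert Y_n\rvert/\sin\kappa_n$, giving $\lvert Y_{n+1} - e^{i\kappa_{n+1}}Y_n\rvert \le (\kappa_{n+1}-\kappa_n)\lvert Y_n\rvert/(\sin\kappa_n\cos\kappa_n)$. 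Since $\bigl\lvert \lvert Y_{n+1}\rvert - \lvert Y_n\rvert\bigr\rvert = \bigl\lvert \lvert Y_{n+1}\rvert - \lvert e^{i\kappa_{n+1}}Y_n\rvert\bigr\rvert \le \lvert Y_{n+1} - e^{i\kappa_{n+1}}Y_n\rvert$, dividing by $\lvert Y_n\rvert\neq 0$ gives \eqref{prop27c}. The only ``clever'' ingredient is spotting the combination $Y_{n+1} - e^{i\kappa_{n+1}}Y_n$ and the cancellation $2\cos\kappa_{n+1}-e^{-i\kappa_{n+1}}=e^{i\kappa_{n+1}}$; after that the proof is purely the elementary inequalities above, and the argument is a direct adaptation of \cite{KreimerLastSimon09} with $a_n$ no longer constant, the needed input being Lemma~\ref{lemma26c}.
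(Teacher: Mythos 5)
Your proof is correct and follows essentially the same route as the paper: (a) via $\Im Y_n = u_n\sin\kappa_n$, and (c) via the identity $Y_{n+1}-e^{i\kappa_{n+1}}Y_n=\bigl(e^{i(\kappa_{n+1}-\kappa_n)}-\tfrac{a_n}{a_{n+1}}\bigr)u_n$ combined with \eqref{lemma26c}, \eqref{prop27a}, and $\bigl\lvert\,\lvert z\rvert-1\,\bigr\rvert\le\lvert z-e^{i\kappa_{n+1}}\rvert$ (your remark that $Y_n\neq 0$ for nontrivial $u$ is a welcome detail the paper leaves implicit). The only variation is in \eqref{prop27b}, where the paper gets the upper bound from $u_{n+1}\sin\kappa_n=\Im(e^{i\kappa_n}Y_n)$ while you expand $\lvert Y_n\rvert^2=u_n^2+u_{n+1}^2-2u_nu_{n+1}\cos\kappa_n$ and use $1-\cos\kappa_n\ge\tfrac12\sin^2\kappa_n$; both are valid and equally elementary.
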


\begin{proof}
\eqref{prop27a} follows from $\Im Y_{n} = u_n \sin \kappa_{n}$ that $\lvert Y_n \rvert \ge \lvert u_n \rvert \sin \kappa_n$.

The first inequality of \eqref{prop27b} follows from $\lvert Y_{n} \rvert \le \lvert u_n \rvert + \lvert u_{n+1} \rvert$ and the arithmetic--quadratic mean. The second inequality follows from
\[
u_n \sin \kappa_n = \Im Y_{n}, \qquad u_{n+1} \sin\kappa_n = \Im (e^{i\kappa_n} Y_{n}).
\]

Since from \eqref{eigenx} one gets
\[
Y_{n+1} - e^{i \kappa_{n+1}} Y_{n} = \left( e^{i(\kappa_{n+1} - \kappa_{n})} - \frac{a_{n}}{a_{n+1}} \right)  u_{n},
\]
using \eqref{lemma26c} gives
\[
\lvert Y_{n+1} - e^{i\kappa_{n+1}} Y_{n} \rvert \le \lvert u_{n} \rvert  \frac{\kappa_{n+1} - \kappa_{n}}{\cos \kappa_{n}} \le \lvert Y_{n} \rvert  \frac{\kappa_{n+1} - \kappa_{n}}{\sin \kappa_{n} \cos \kappa_{n}}
\]
which implies \eqref{prop27c} by dividing by $\lvert Y_n\rvert$ and using $\lvert \lvert z \rvert - 1 \rvert \le \lvert z - e^{i\kappa_{n+1}}\rvert$.
\end{proof}

\begin{lemma} \label{lemma28}
\begin{enumerate}[(a)]
\item The function
\[
q(y) = \sup_{0 < x \le y} \left( \frac 1{\sin x \cos x} - \frac 1x \right)
\]
is finite for $y \in (0,\pi/2)$, and $q(y) = \frac 23 y + O(y^3)$ as $y\downarrow 0$.
\item 
\[
\prod_{n=N+2}^\infty \left( 1 + \frac{ \kappa_{n+1} - \kappa_n }{\sin\kappa_n \cos \kappa_n} \right) \le \frac{\kappa_\infty}{\kappa_{N+2}} \exp( \kappa_\infty q(\kappa_\infty) )
\]
\end{enumerate}
\end{lemma}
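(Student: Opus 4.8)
The plan is to handle the two parts separately, with (a) being a routine but necessary calculus computation and (b) resting on a telescoping identity.

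\textbf{Part (a).} I would work directly with $h(x):=\frac{1}{\sin x\cos x}-\frac1x=\frac{2}{\sin 2x}-\frac1x$, which is odd and continuous on $(0,\pi/2)$. A Taylor expansion $\frac{2}{\sin 2x}=\frac1x\left(1+\tfrac{2x^2}{3}+O(x^4)\right)=\frac1x+\tfrac23 x+O(x^3)$ gives $h(x)=\tfrac23 x+O(x^3)$, in particular $h(x)\to0$ as $x\downarrow0$, so $h$ extends continuously to $[0,y]$ for each $y<\pi/2$ and is bounded there; hence $q(y)=\sup_{0<x\le y}h(x)<\infty$. For the asymptotics, on one hand $q(y)\ge h(y)=\tfrac23 y+O(y^3)$ because $x=y$ is admissible in the supremum; on the other hand, for $y$ small enough one has $h(x)\le\tfrac23 x+Cx^3$ for all $x\in(0,y]$ with $C>0$, and the right-hand side is increasing, so $q(y)\le\tfrac23 y+Cy^3$. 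Combining the two bounds yields $q(y)=\tfrac23 y+O(y^3)$.

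\textbf{Part (b).} The idea is to peel a telescoping factor out of each term in the product. Recall from \eqref{lemma26a} that $(\kappa_n)_{n>N(x)}$ is non-decreasing, lies in $(0,\pi/2)$, and $\kappa_n\to\kappa_\infty<\pi/2$; also $q$ is non-decreasing, so $q(\kappa_\infty)<\infty$ by part (a). For $n\ge N+2$ write $\frac{1}{\sin\kappa_n\cos\kappa_n}=\frac1{\kappa_n}+\delta_n$ with $\delta_n:=\frac{1}{\sin\kappa_n\cos\kappa_n}-\frac1{\kappa_n}$; then $0\le\delta_n\le q(\kappa_n)\le q(\kappa_\infty)$ (nonnegativity since $\sin 2\kappa_n\le 2\kappa_n$, and the bound by monotonicity of $q$). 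The key computation is
\[
1+\frac{\kappa_{n+1}-\kappa_n}{\sin\kappa_n\cos\kappa_n}
=\frac{\kappa_{n+1}}{\kappa_n}+(\kappa_{n+1}-\kappa_n)\delta_n
=\frac{\kappa_{n+1}}{\kappa_n}\left(1+\frac{\kappa_n}{\kappa_{n+1}}(\kappa_{n+1}-\kappa_n)\delta_n\right)
\le\frac{\kappa_{n+1}}{\kappa_n}\,e^{(\kappa_{n+1}-\kappa_n)q(\kappa_\infty)},
\]
using $\kappa_n\le\kappa_{n+1}$, $\delta_n\le q(\kappa_\infty)$, and $1+t\le e^t$.

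To conclude, I would take the product over $n=N+2,\dots,M$, in which both factors telescope, obtaining $\prod_{n=N+2}^M(1+\cdots)\le\frac{\kappa_{M+1}}{\kappa_{N+2}}\,e^{(\kappa_{M+1}-\kappa_{N+2})q(\kappa_\infty)}$, and then let $M\to\infty$: the partial products increase (every factor is $\ge1$) and are bounded, so the infinite product converges, and $\kappa_{M+1}\to\kappa_\infty$ together with $\kappa_{M+1}-\kappa_{N+2}\le\kappa_\infty$ give the stated bound. There is no deep obstacle: part (a) is elementary calculus, and the crux of part (b) is simply the observation to factor $\kappa_{n+1}/\kappa_n$ out of each term so the product telescopes to $\kappa_\infty/\kappa_{N+2}$, while the residual $\delta_n$-contributions combine, again telescopically via $\sum(\kappa_{n+1}-\kappa_n)=\kappa_\infty-\kappa_{N+2}$, into the exponential factor. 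The only care needed is to invoke part (a) to ensure $q(\kappa_\infty)<\infty$, which relies on $\kappa_\infty=\arccos(x/2)<\pi/2$, i.e.\ on $x>x_0\ge0$.
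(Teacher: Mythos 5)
Your proof is correct and follows essentially the same route as the paper: the Taylor expansion of $\tfrac{1}{\sin x\cos x}-\tfrac1x$ for part (a), and for part (b) the same splitting $\tfrac{1}{\sin\kappa_n\cos\kappa_n}=\tfrac1{\kappa_n}+\delta_n$ with $\delta_n\le q(\kappa_\infty)$, the bound $1+t\le e^t$, and the telescoping of $\kappa_{n+1}/\kappa_n$ and of $\sum(\kappa_{n+1}-\kappa_n)$. Your treatment of part (a) is in fact slightly more careful than the paper's (you justify both directions of the estimate on the supremum), but the argument is the same.
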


\begin{proof}
(a) As $x \to 0$, $\sin x \cos x = x - \frac 23 x^3 + O(x^5)$ so
\[
\frac 1{\sin x \cos x} - \frac 1x = \frac 23 x + O(x^3).
\]
Therefore, this expression extends to a continuous function on any $[0,y]$ with $y < \pi/2$, so $q(y)$ is finite; moreover, $q(y) = \frac 23 y + O(y^3)$ as $y\downarrow 0$.

(b) We use
\begin{align*}
 1 + \frac{ \kappa_{n+1} - \kappa_n}{\sin\kappa_n \cos \kappa_n} & \le 1 + (\kappa_{n+1} - \kappa_n)\left( \frac 1{\kappa_n} + g(\kappa_n) \right) \\
 & \le \frac {\kappa_{n+1}}{\kappa_n} + (\kappa_{n+1} - \kappa_n) g(\kappa_\infty) \\
 & \le \frac{\kappa_{n+1}}{\kappa_n} e^{(\kappa_{n+1} - \kappa_n) g(\kappa_\infty)}
\end{align*}
so taking the product gives
\[
\prod_{n=N+2}^\infty  \left( 1 + \frac{ \kappa_{n+1} - \kappa_n }{\sin\kappa_n \cos \kappa_n} \right) \le \frac{\kappa_\infty}{\kappa_{N+2}} \exp( (\kappa_\infty - \kappa_{N+2}) q(\kappa_\infty) ).
\]
Since $\kappa_{N+2} > 0$, this concludes the proof.
\end{proof}

\begin{prop}\label{prop212}
With $h=h(x)$ defined in \eqref{eq19} and writing $N=N(x)$,
\begin{equation}\label{neweq}
\sup_{n \ge N+2} \left(\frac{p_{n-1}^2 + p_n^2}{p_{N+1}^2 + p_{N+2}^2}\right)^{\pm 1} = O( \kappa_{N+2}^{-4} e^{2(2-x)}), \qquad x \nearrow 2.
\end{equation}
\end{prop}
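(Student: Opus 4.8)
The plan is to set $u_n=p_{n-1}(x)$ and write $E_n=u_n^2+u_{n+1}^2=p_{n-1}^2+p_n^2$, so that the ratio in \eqref{neweq} is $E_n/E_{N+2}$; similarly write $E_n^w=w_n^2+w_{n+1}^2$ for an arbitrary solution $w$ of \eqref{eigenx}. I will bound $\sup_{n\ge N+2}E_n/E_{N+2}$ and $\sup_{n\ge N+2}E_{N+2}/E_n$ separately. Throughout I use the following elementary facts, valid for $x$ close to $2$: $\kappa_\infty\le\pi/2$; $\sin\kappa_{N+2}\ge\tfrac2\pi\kappa_{N+2}$, and since $\kappa_n\in(0,\pi/2)$ is increasing (\eqref{lemma26a}) also $\sin\kappa_n\ge\sin\kappa_{N+2}$ for $n\ge N+2$; and $\kappa_\infty q(\kappa_\infty)\le 2-x$, which follows from Lemma~\ref{lemma28}(a) (which gives $q(y)=\tfrac23 y+O(y^3)$) together with $2-x=2(1-\cos\kappa_\infty)=\kappa_\infty^2+O(\kappa_\infty^4)$.

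The upper bound is a direct combination of the preceding estimates, and it holds for \emph{any} solution of \eqref{eigenx}. For $n\ge N+2$, Proposition~\ref{prop27} (namely \eqref{prop27b}) gives $\tfrac12|Y_n|^2\le E_n\le\tfrac2{\sin^2\kappa_n}|Y_n|^2$, while \eqref{prop27c} telescoped and then Lemma~\ref{lemma28}(b) give $|Y_n|/|Y_{N+2}|\le\prod_{m=N+2}^\infty\bigl(1+\tfrac{\kappa_{m+1}-\kappa_m}{\sin\kappa_m\cos\kappa_m}\bigr)\le\tfrac{\kappa_\infty}{\kappa_{N+2}}e^{\kappa_\infty q(\kappa_\infty)}$. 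Combining with the facts above,
\[
\frac{E_n}{E_{N+2}}\le\frac4{\sin^2\kappa_n}\cdot\frac{|Y_n|^2}{|Y_{N+2}|^2}\le\frac4{(2\kappa_{N+2}/\pi)^2}\cdot\frac{(\pi/2)^2}{\kappa_{N+2}^2}\,e^{2(2-x)}=\frac{\pi^4}4\,\kappa_{N+2}^{-4}\,e^{2(2-x)},
\]
which is the desired bound on $\sup_{n\ge N+2}E_n/E_{N+2}$; applied to $w$ it gives $E_n^w/E_{N+2}^w\le\tfrac{\pi^4}4\kappa_{N+2}^{-4}e^{2(2-x)}$ for $n\ge N+2$.

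For the lower bound one cannot simply telescope $|Y_n|/|Y_{N+2}|$ downward, because \eqref{prop27c} only yields $|Y_{m+1}|/|Y_m|\ge 1-\tfrac{\kappa_{m+1}-\kappa_m}{\sin\kappa_m\cos\kappa_m}$, and just past the turning point this can be negative (the jump $\kappa_{N+3}-\kappa_{N+2}$ need not be small compared with $\kappa_{N+2}$). Instead I will use Wronskian comparison with the two solutions $w^{(1)},w^{(2)}$ of \eqref{eigenx} determined by $(w^{(1)}_{N+1},w^{(1)}_{N+2})=(0,1)$ and $(w^{(2)}_{N+1},w^{(2)}_{N+2})=(1,0)$. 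The Wronskian $W(u,w)=a_n(u_{n+1}w_n-u_nw_{n+1})$ is independent of $n$; evaluated at $n=N+1$ it equals $-a_{N+1}p_N$ for $w^{(1)}$ and $a_{N+1}p_{N+1}$ for $w^{(2)}$. Since $|W(u,w)|\le a_n(E_n^uE_n^w)^{1/2}\le(E_n^uE_n^w)^{1/2}$ by Cauchy--Schwarz and $a_n\le 1$, and since $p_N,p_{N+1}$ are not both $0$, we get for all $n\ge N+2$
\[
E_n\;\ge\;\frac{a_{N+1}^2\,\max(p_N^2,\,p_{N+1}^2)}{\max\bigl(E_n^{w^{(1)}},\,E_n^{w^{(2)}}\bigr)}.
\]
Now $E_{N+2}^{w^{(1)}}=1+4\cos^2\kappa_{N+2}\le5$ and $E_{N+2}^{w^{(2)}}=(a_{N+1}/a_{N+2})^2\le1$, so the upper bound applied to $w^{(1)},w^{(2)}$ gives $E_n^{w^{(i)}}\le\tfrac{5\pi^4}4\kappa_{N+2}^{-4}e^{2(2-x)}$; and reading \eqref{eigenx} at $n=N+2$ gives $p_{N+2}=2\cos\kappa_{N+2}\,p_{N+1}-\tfrac{a_{N+1}}{a_{N+2}}p_N$, whence $E_{N+2}=p_{N+1}^2+p_{N+2}^2\le 9p_{N+1}^2+2p_N^2\le 11\max(p_N^2,p_{N+1}^2)$. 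Dividing, $E_{N+2}/E_n\le\tfrac{55\pi^4}{4a_{N_0}^2}\kappa_{N+2}^{-4}e^{2(2-x)}$ for all $n\ge N+2$, which together with the upper bound proves \eqref{neweq}.

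The main obstacle is the lower bound, and the key is the choice of comparison solutions pinned at the indices $N+1$ and $N+2$: pinning there makes $E_{N+2}^{w^{(i)}}$ absolute constants, so the already-proved upper bound controls $E_n^{w^{(i)}}$, while $W(u,w^{(i)})$ is (up to $a_{N+1}$) just $p_N$ or $p_{N+1}$ and $E_{N+2}$ is controlled by $\max(p_N^2,p_{N+1}^2)$ directly from the recurrence — so no input from the non-oscillatory analysis is needed. The remaining work is the routine verification of the elementary inequalities of the first paragraph and the bookkeeping of explicit constants.
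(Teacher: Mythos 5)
Your proof is correct, and the second half takes a genuinely different route from the paper. For $\sup_{n}E_n/E_{N+2}$ you do exactly what the paper does: telescope \eqref{prop27c} upward, invoke Lemma~\ref{lemma28}, and convert $\sin^{-2}\kappa_{N+2}$, $\kappa_\infty^2/\kappa_{N+2}^2$ and $e^{2\kappa_\infty q(\kappa_\infty)}$ into $\kappa_{N+2}^{-4}e^{2(2-x)}$. For the reciprocal direction the paper asserts the two-sided estimate $\lvert Y_n/Y_{N+2}\rvert^{\pm1}\le\frac{\kappa_\infty}{\kappa_{N+2}}e^{\kappa_\infty q(\kappa_\infty)}$ directly from \eqref{prop27c} and Lemma~\ref{lemma28}; as you note, the downward telescoping needs the factors $1-\frac{\kappa_{m+1}-\kappa_m}{\sin\kappa_m\cos\kappa_m}$ to be positive, and the first of these (at $m=N+2$) is not obviously so under the standing hypotheses, since $\kappa_{N+3}-\kappa_{N+2}$ need not be small relative to $\kappa_{N+2}$. (The gap is repairable within the paper's framework, e.g.\ by estimating $\lvert u_m\rvert$ via $\lvert Y_{m+1}\rvert/\sin\kappa_{m+1}$ so that the downward factors again have the form $1+\cdots$, but the paper does not say this.) Your workaround --- Wronskian comparison of $p$ with the two solutions pinned at $(N+1,N+2)$, for which $E_{N+2}^{w^{(i)}}$ is an absolute constant and to which the already-proved one-sided bound applies --- is valid: I checked the Wronskian evaluations, the Cauchy--Schwarz step, $E_{N+2}^{w^{(1)}}\le 5$, $E_{N+2}^{w^{(2)}}\le 1$, and $E_{N+2}\le 11\max(p_N^2,p_{N+1}^2)$, and all are right. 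The paper's route is shorter and symmetric; yours uses only the unconditionally valid upward product estimate, at the cost of slightly larger (irrelevant) constants.
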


\begin{proof}
Applying \eqref{prop27c} to the solution $(p_{n-1}(x))$ together with Lemma~\ref{lemma28}, we conclude that for all $n \ge N + 2$,
\[
\left \lvert \frac{  Y_n}{ Y_{N+2}} \right\rvert^{\pm 1} \le \frac{\kappa_\infty}{\kappa_{N+2}} \exp(\kappa_\infty q(\kappa_\infty)).
\]
Prop.~\ref{prop27}(b) gives
\[
 \frac {\sin^2 \kappa_{N+2}}4 \left \lvert \frac{  Y_n}{ Y_{N+2}} \right\rvert^{2} \le 
 \frac{p_{n-1}^2 + p_n^2}{p_{N+1}^2 + p_{N+2}^2}  \le \frac 4{\sin^2 \kappa_n} \left \lvert \frac{  Y_n}{ Y_{N+2}} \right\rvert^{2}
\]
and, since $\sin \kappa_{N+2} \le \sin\kappa_n$, we can combine the previous inequalities as
\begin{align*}
\left( \frac{p_{n-1}^2 + p_n^2}{p_{N+1}^2 + p_{N+2}^2} \right)^{\pm 1}
& \le \frac 4{\sin^2 \kappa_{N+2}} \left \lvert \frac{  Y_n}{ Y_{N+2}} \right\rvert^{\pm 2} \\
& \le  \frac 4{\sin^2 \kappa_{N+2}} \frac{\kappa_\infty^2}{\kappa_{N+2}^2} \exp(2\kappa_\infty q(\kappa_\infty))
\end{align*}
Since $\kappa_\infty = \arccos \frac x2 \to 0$ as $x \nearrow 2$, by Lemma~\ref{lemma28}(a),
\[
\kappa_\infty q(\kappa_\infty) = \frac 23 \kappa_\infty^2 + O(\kappa_\infty^4) = \frac 23 (2-x) + O((2-x)^2), \qquad x\nearrow 2.
\]
Finally using $\sin \kappa_{N+2} \ge \frac 2\pi \kappa_{N+2}$ completes the proof.
\end{proof}

Before we present the proof of Theorem~\ref{thm2}, we recall that the recurrence relation \eqref{eigenx} can equivalently be written in matrix form as
\[
\begin{pmatrix}
u_{n+1} \\
a_{n} u_n
\end{pmatrix}
= A_n
\begin{pmatrix}
u_{n} \\
a_{n-1} u_{n-1}
\end{pmatrix},
\qquad 
A_n = \begin{pmatrix}
\frac{x-b_n}{a_n} & - \frac 1{a_n} \\
a_n & 0
\end{pmatrix}
\]
A standard observation about these transfer matrices is that $\det A_n = 1$ so $\lVert A_n^{-1} \rVert = \lVert A_n \rVert$. In particular, this norm is uniformly bounded for $x\in [-2,2]$ and $n\ge N_0$ by boundedness of $b_n$ and by $a_n  \in [a_{N_0}, 1]$.

\begin{proof}[Proof of Theorem \ref{thm2}]
We continue to write $N=N(x)$ for readability. Since
\[
\begin{pmatrix}
p_{N+2} \\
a_{N+2} p_{N+1}
\end{pmatrix}
= A_{N+2} A_{N+1}
\begin{pmatrix}
p_{N} \\
a_{N} p_{N-1}
\end{pmatrix},
\]
uniform boundedness of norms of $A_{n}$ implies that
\[
\left( \frac{a_{N+2}^2 p_{N+1}^2 + p_{N+2}^2}{a_{N}^2 p_{N-1}^2 + p_{N}^2} \right)^{\pm 1} \le C
\]
 and therefore, by Prop.~\ref{prop29}(b), also
\begin{equation}\label{estimatesq}
\left( \frac{p_{N+1}^2 + p_{N+2}^2}{e^{2g}} \right)^{\pm 1} \le C N^2.
\end{equation}
Combining this with Prop.~\ref{prop212} and Lemma~\ref{lemma25} gives
\[
\sup_{n\ge N+2} \left(\frac{p_{n-1}^2 + p_n^2}{e^{2g}}\right)^{\pm 1} = O( e^{2 h}), \qquad x \nearrow 2,
\]
with $h(x)$ defined in \eqref{eq19}. By \cite[Corollary 1.3]{KreimerLastSimon09} (as a corollary of Carmona's formula, see also \cite{Carmona83,LastSimon99,KrutikovRemling01,Simon07}), this implies that the density $f(x)$ of the spectral measure obeys
\[
\lvert \log f(x) + 2 g(x) \rvert \le 2 h(x) + O(1), \qquad x \nearrow 2,
\]
which concludes the proof.
\end{proof}

\section{Polynomially decaying Jacobi perturbations}\label{sec3}

In this section we consider sequences $a_n$, $b_n$ given by \eqref{anansatz}, \eqref{bnansatz} and prove Theorem~\ref{thm3}.  We will use notation $f \sim g$ to denote that asymptotically, $C^{-1} g \le f \le C g$ for some $C \in (0,\infty)$.

\begin{lemma} If sequences $a_n$, $b_n$ obey \eqref{anansatz}, \eqref{bnansatz}, and $\beta < +\infty$, then as $x \nearrow 2$, writing $N = N(x)$,
\begin{equation}\label{Nxasymp}
N \sim (2-x)^{-1/\beta},
\end{equation}
\begin{equation}\label{badiff}
b_{N+2}-b_{N+1}+ a_{N+2} - a_{N+1} \sim  (2-x)^{1 + 1/\beta},
\end{equation}
\begin{equation}\label{hxasymp}
e^{h(x)} \sim (2-x)^{-1/2 - 2/\beta}.
\end{equation}
\end{lemma}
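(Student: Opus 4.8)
The plan is to reduce each of the three asymptotic equivalences to the basic input that, under \eqref{anansatz}–\eqref{bnansatz}, the quantity $2 - b_n - 2a_n$ has leading behavior $C_1 n^{-\beta}$ with $C_1 > 0$ given by \eqref{C1defn}; this follows by adding the two expansions and identifying the slowest-decaying term. I would first prove \eqref{Nxasymp}. By the definition \eqref{Nx}, $N = N(x)$ is characterized by $b_{N} + 2a_{N} \le x < b_{N+1} + 2a_{N+1}$, equivalently $2 - b_{N+1} - 2a_{N+1} < 2 - x \le 2 - b_N - 2a_N$. Since $2 - b_n - 2a_n = C_1 n^{-\beta}(1 + o(1))$ is eventually positive and strictly decreasing (this uses \eqref{anbnincrease}, which holds for our sequences), sandwiching $2-x$ between its values at $N$ and $N+1$ and inverting the asymptotics $C_1 n^{-\beta}$ gives $N \sim (2-x)^{-1/\beta}$; more precisely $(2-x)^{-1/\beta}$ lies between constant multiples of $N$ and $N+1$, hence $N \sim (2-x)^{-1/\beta}$.

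Next I would prove \eqref{badiff}. Write $\rho_n = b_{n+1} - b_n + a_{n+1} - a_n \ge 0$, so that $b_{N+2} - b_{N+1} + a_{N+2} - a_{N+1} = \rho_{N+1}$. From \eqref{anansatz}–\eqref{bnansatz}, differencing term by term, $\rho_n$ has leading term $\beta C_1 n^{-\beta-1}(1+o(1))$: each power $c\, n^{-\gamma}$ contributes $c\bigl(n^{-\gamma} - (n+1)^{-\gamma}\bigr) = c\gamma n^{-\gamma-1}(1+o(1))$, and the $o$-terms in \eqref{anansatz}–\eqref{bnansatz} are assumed $o(n^{-1-\tau_1})$ and $o(n^{-1-\sigma_1})$, i.e. $o(n^{-1-\beta})$, which survives the differencing (the difference of an $o(n^{-1-\beta})$ sequence is $o(n^{-1-\beta})$ since these sequences have bounded variation of the required order — this is where a small amount of care is needed, but it is exactly the hypothesis built into the $o$-terms). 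Hence $\rho_{N+1} \sim (N+1)^{-\beta-1} \sim N^{-\beta-1} \sim \bigl((2-x)^{-1/\beta}\bigr)^{-\beta-1} = (2-x)^{1+1/\beta}$, using \eqref{Nxasymp}.

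Finally, \eqref{hxasymp} is immediate from the definition \eqref{eq19}: $e^{h(x)} = N(x)\,\rho_{N(x)+1}^{-1}\,(2-x)^{1/2}$, so by \eqref{Nxasymp} and \eqref{badiff},
\[
e^{h(x)} \sim (2-x)^{-1/\beta}\cdot (2-x)^{-1-1/\beta}\cdot (2-x)^{1/2} = (2-x)^{-1/2 - 2/\beta}.
\]
I expect the only real obstacle to be the bookkeeping in \eqref{badiff}: one must verify that differencing the finite sums of power terms produces the claimed leading term $\beta C_1 n^{-\beta-1}$ with the right constant, and that the error terms in \eqref{anansatz}–\eqref{bnansatz} do not spoil this after differencing. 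Everything else is elementary manipulation of the asymptotic relation $\sim$ together with the already-established monotonicity of $2 - b_n - 2a_n$.
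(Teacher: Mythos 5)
Your proof is correct and follows essentially the same route as the paper: identify the leading behavior of $2-2a_n-b_n$ (resp.\ of the first difference $a_{n+1}-a_n+b_{n+1}-b_n$) as a positive constant times $n^{-\beta}$ (resp.\ $n^{-1-\beta}$), invert via the defining inequalities of $N(x)$, and substitute into \eqref{eq19}; your worry about the $o(n^{-1-\beta})$ remainders is unfounded, since the difference of an $o(n^{-1-\beta})$ sequence is $o(n^{-1-\beta})$ by the triangle inequality alone. The one small slip is the claimed leading coefficient $\beta C_1$ of $\rho_n$: since the $a$-terms enter $C_1$ with weight $2$ but $\rho_n$ with weight $1$, the correct coefficient is $c_1\tau_1$, $c_1\tau_1+d_1\sigma_1$, or $d_1\sigma_1$ in the three cases of \eqref{C1defn} --- immaterial here, as $\sim$ only asserts two-sided comparability up to constants.
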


\begin{proof}
If $a_n$ is not eventually constant, then $1 - a_n \sim n^{-\tau_1}$, and if $b_n$ is not eventually constant, then $-b_n \sim n^{-\sigma_1}$. Thus, $2 - 2a_n - b_n \sim n^{-\beta}$, so \eqref{Nxasymp} follows.

Similarly, $a_{N+2} - a_{N+1} \sim N^{-1-\tau_1}$ and $b_{N+2}-b_{N+1} \sim N^{-1-\sigma_1}$ imply
\[
b_{N+2}-b_{N+1}+ a_{N+2} - a_{N+1} \sim N^{-1-\beta}
\]
and combining with \eqref{Nxasymp} gives \eqref{badiff}. Combining \eqref{Nxasymp} and \eqref{badiff} into \eqref{eq19} gives \eqref{hxasymp}.
\end{proof}

With this, we can already get the case $\beta \ge 2$ out of the way.

\begin{proof}[Proof of Theorem~\ref{thm3}(a)]
We begin by briefly addressing the case $\beta = +\infty$ (when both sequences $a_n, b_n$ are eventually constant): that case is easy but doesn't fit in the general arguments that follow. Let us note a uniform bound
\[
(p_{N_0-1}^2 + p_{N_0}^2)^{\pm 1} \le C
\]
which holds for $x\in [x_1,2]$ by continuity. Multiplying this by \eqref{neweq} from Prop.~\ref{prop212} and using $\kappa_{N+2}(x)  = \arccos \frac x2$
we obtain 
\[
\sup_{n\ge N_0} (p_{n-1}^2 + p_n^2)^{\pm 1} = O((2-x)^2 e^{2(2-x)})
\]
and therefore $f(x) = O(\log(2-x))$.

From now on we assume $2\le \beta < +\infty$. By Prop.~\ref{prop29}(a), with $g$ given by \eqref{gxsum},
\[
( p_{N+1}^2 + p_{N+2}^2 )^{\pm 1} \le C N^2 e^{2g(x)}
\]
Multiplying this with \eqref{neweq} from Prop.~\ref{prop212} and using Lemma~\ref{lemma25}, we see that 
\[
\sup_{n\ge N+2} (p_{n-1}^2 + p_n^2)^{\pm 1} = O(e^{2 h(x) + 2 g(x)})
\]
and therefore $\log f(x) = O(h(x) + g(x))$.

In the case $\beta \ge 2$, we have $a_n, b_n = O(n^{-2})$ so
\[
\gamma_n(x) = \arccosh \frac{x-b_n}{2a_n} \le \arccosh\frac{2-b_n}{2a_n} = O(n^{-1})
\]
and therefore
\[
g(x) = \sum_{n=N_0}^{N(x)} \gamma_n(x) = O( \log N(x)) = O(\log (2-x)).
\]
Since \eqref{hxasymp} implies $h(x) = O(\log (2-x))$, the proof is completed.
\end{proof}

We now turn our attention to the more interesting case $\beta < 2$. The function $h(x)$ is still described by \eqref{hxasymp} so the remainder of our effort is directed at the asymptotic behavior of the function $g(x)$ given by \eqref{gxsum}.

Denote
\[
\delta = 2-x
\]
and define
\[
\tilde B_n = \frac{2 - 2a_n - b_n}{a_n}, \qquad \tilde A_n = a_n^{-1} - 1,
\]
in order to write
\[
\frac{x-b_n}{2a_n} = 1 +  \frac{2-2a_n - b_n}{2a_n} - \frac{\delta}{2a_n} = 1 + \frac 12 ( \tilde B_n -  \delta \tilde A_n - \delta).
\]
Denoting also
\[
F(z) = \begin{cases} \arccosh \left(1 + \frac z2\right)  & z > 0 \\
0 & z \le 0 
\end{cases}
\]
allows us to write the function $g(x)$ as
\[
g(x) = \sum_{n=N_0}^\infty F(\tilde B_n -  \delta \tilde A_n - \delta)
\]
By algebraic manipulations, $\tilde B_n$ and $\tilde A_n$ can be written in the form
\[
\tilde B_n = B_n + o(n^{-1-\beta}), \qquad \tilde A_n = A_n + o(n^{-1-\beta})
\]
where
\begin{align}
B_n & = C_1 n^{-\beta} + \sum_{l=2}^L C_l n^{-\beta_l} \label{Bnrep} \\
A_n & = D_1 n^{-\beta} + \sum_{l=2}^L D_l n^{-\beta_l} \label{Anrep}
\end{align}
with $0 < \beta < \beta_2 < \dots < \beta_L$ and $C_1>0$, $D_1\ge 0$.

The following lemma will allow us to ignore the $o(n^{-1-\beta})$ remainder terms.

\begin{lemma}\label{lemma32b}
\begin{equation}\label{gxsum2}
g(x) = \sum_{n=N_0}^\infty F(B_n -  \delta A_n - \delta) + O(1), \qquad x\nearrow 2.
\end{equation}
\end{lemma}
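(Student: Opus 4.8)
I want to show that replacing $\tilde A_n, \tilde B_n$ by their leading-order pieces $A_n, B_n$ changes the sum $g(x)=\sum_{n\ge N_0} F(\tilde B_n - \delta\tilde A_n - \delta)$ only by an $O(1)$ amount as $\delta = 2-x \downarrow 0$. The strategy is to estimate the difference termwise using a bound on the modulus of continuity of $F$, then sum the resulting bound over $n$, splitting the range of summation at the turning point $N = N(x) \sim \delta^{-1/\beta}$.

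**Key steps.** First I would record the relevant regularity of $F$: since $F(z) = \arccosh(1+z/2)$ for $z > 0$ and $0$ otherwise, one has $F(z) \lesssim \sqrt{z}$ for small $z>0$, and more usefully $|F(z) - F(w)| \lesssim |z-w|/\sqrt{\max(z,w)}$ for $z,w$ in a bounded interval (this is just the fact that $F'(z) \asymp z^{-1/2}$ near $0$; for arguments bounded away from $0$, $F$ is Lipschitz, and the one-sided vanishing for $z \le 0$ is harmless because $F$ is monotone and continuous at $0$). Second, write $\tilde B_n - \delta\tilde A_n - \delta = (B_n - \delta A_n - \delta) + r_n$ where $r_n = (\tilde B_n - B_n) - \delta(\tilde A_n - A_n) = o(n^{-1-\beta})$ uniformly in $\delta \in (0,\delta_0)$ (the $\delta$ prefactor only helps). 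Third, I would bound each summand $|F(\tilde B_n - \delta\tilde A_n - \delta) - F(B_n - \delta A_n - \delta)|$. For $n \le N$, the "main" argument $B_n - \delta A_n - \delta$ is $\asymp n^{-\beta}$ (since for $n \le N$ we have $C_1 n^{-\beta} \gtrsim \delta$, so the leading term dominates), giving a termwise bound $\lesssim |r_n|/\sqrt{n^{-\beta}} = o(n^{-1-\beta/2})$, and $\sum_{n=N_0}^N o(n^{-1-\beta/2})$ converges since $\beta > 0$. For $n > N$, both arguments $\tilde B_n - \delta\tilde A_n - \delta$ and $B_n - \delta A_n - \delta$ are negative (this is essentially the defining property of $N$, up to the negligible remainder — here one may need to shift $N$ by a bounded number of indices, which only contributes $O(1)$ terms each of size $o(1)$), so $F$ evaluates to $0$ on both and the contribution vanishes. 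Summing: the total difference is $O(1)$, which is \eqref{gxsum2}.

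**Main obstacle.** The delicate point is the transition region near $n = N(x)$: there the main argument $B_n - \delta A_n - \delta$ is small (it crosses zero), so the $1/\sqrt{z}$ factor in the modulus-of-continuity estimate blows up, and one cannot afford the naive bound $|r_n|/\sqrt{\text{small}}$. The fix is to handle a bounded-width window of indices around $N$ separately: for $n$ within $O(1)$ of $N$, use instead the crude bound $F(z) \lesssim \sqrt{|z|}$ directly on both terms — each is $\lesssim \sqrt{n^{-\beta}} \asymp \sqrt{\delta} \to 0$ — so the whole window contributes $o(1)$; outside the window (but still $n \le N$), the argument $B_n - \delta A_n - \delta$ is comparable to $n^{-\beta}$ with a uniform constant, restoring the convergent estimate above, while for $n$ a bounded amount past $N$ the remainder $r_n$ could in principle push a negative main argument slightly positive, but then $F$ of it is $\lesssim \sqrt{|r_n|} = o(1)$ and again only $O(1)$ such terms occur. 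Assembling these three regimes gives the claimed $O(1)$ bound, uniformly as $x \nearrow 2$.
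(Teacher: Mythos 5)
Your overall strategy (termwise comparison via a modulus-of-continuity bound for $F$, exploiting $F'(z)\asymp z^{-1/2}$) is workable, but there is a genuine gap in the middle step. You claim that for $n\le N$ the main argument satisfies $B_n-\delta A_n-\delta\asymp n^{-\beta}$, justified by ``$C_1n^{-\beta}\gtrsim\delta$,'' and that the exceptional set where this fails is a window of $O(1)$ indices around $N$. Both claims are false: since $\delta\approx C_1N^{-\beta}$, one has $B_n-\delta A_n-\delta\approx C_1(n^{-\beta}-N^{-\beta})\approx C_1\beta N^{-1-\beta}(N-n)$ for $n$ near $N$, and this is comparable to $n^{-\beta}\asymp N^{-\beta}$ only when $N-n\gtrsim N$. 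So the ``transition region'' where your convergent bound $|r_n|/\sqrt{n^{-\beta}}=o(n^{-1-\beta/2})$ is unavailable has width of order $N$, not $O(1)$, and your proposed fix (a crude $\sqrt{|z|}$ bound on a bounded window) does not cover it. The argument is repairable: on $N/2<n<N$ one has the lower bound $B_n-\delta A_n-\delta\gtrsim N^{-1-\beta}(N-n)$, so the termwise bound is $o\bigl(N^{-(1+\beta)/2}(N-n)^{-1/2}\bigr)$, and summing $(N-n)^{-1/2}$ over this range gives $O(N^{1/2})$, hence a total contribution $o(N^{-\beta/2})=o(1)$; together with the $O(1)$ indices where the sign of the argument can flip (each contributing $O(\sqrt{|r_n|})=o(1)$), this closes the proof. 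But as written, the estimate does not go through.

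For comparison, the paper avoids all of this with a one-line monotonicity trick: the one-step increment $B_{n}-\delta A_{n}-(B_{n+1}-\delta A_{n+1})$ is of exact order $n^{-1-\beta}$, which dominates the remainder $r_n=o(n^{-1-\beta})$, so for $n$ large one has $B_{n+1}-\delta A_{n+1}\le\tilde B_n-\delta\tilde A_n\le B_{n-1}-\delta A_{n-1}$. Monotonicity of $F$ then sandwiches $g(x)$ between two index-shifted copies of $\sum F(B_n-\delta A_n-\delta)$, which differ from the unshifted sum by $O(1)$ boundary terms since each summand is uniformly bounded. That route requires no analysis of the turning point at all, which is precisely where your argument runs into trouble.
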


\begin{proof}
Plugging in $n\pm 1$ for $n$ and expanding gives
\begin{align*}
B_{n\pm 1} & = B_n \mp  C \beta  n^{-1-\beta} + o(n^{-1-\beta}) \\
A_{n \pm 1} & = A_n \mp  D \beta n^{-1-\beta} + o(n^{-1-\beta})
\end{align*}
so
\[
B_{n\pm 1} - \delta A_{n\pm 1} = B_n - \delta A_n \mp (C - \delta D) n^{-1-\beta} + o(n^{-1-\beta}).
\]
Thus, there exists $N_2$ such that for all $n \ge N_2$ and all $\delta \in (0,\frac{D}{2C})$,
\[
B_{n+1} -  \delta A_{n+1}  \le \tilde B_n -  \delta \tilde A_n  \le B_{n-1} -  \delta A_{n-1} .
\]
Since the function $F$ is monotone increasing on $\mathbb{R}$, this implies
\[
\sum_{n=N_2+1}^\infty F(B_n -  \delta A_n - \delta) \le g(x) \le \sum_{n=N_2-1}^\infty F(B_n -  \delta A_n - \delta)
\]
Since individual terms of the sum are uniformly bounded, this concludes the proof.
\end{proof}

To proceed further, we redefine $N(x)$ as
\[
N(x) = \max \{ n : B_n - \delta A_n - \delta \ge 0 \}
\]
so that the sum in \eqref{gxsum2} terminates at $N(x)$, and we use the series expansion
\[
F(z) = \sum_{l=0}^{l_0-1} c_l z^{l+1/2} + O(z^{l_0 + 1/2}), \qquad z \downarrow 0,
\]
as observed in \cite{KreimerLastSimon09}; in particular, $c_0=1$. Taking $l_0 = \lfloor \frac 1{\beta} - \frac 12 \rfloor+1$, we have
\[
(B_n - \delta A_n - \delta)^{l_0 + 1/2} = O( n^{-\beta(l_0 + 1/2)}), \qquad n \to \infty
\]
and $\beta (l_0 + 1/2) > 1$ so
\begin{align*}
g(x) & = \sum_{n=N_0}^{N(x)} F(B_n - \delta A_n - \delta) + O(1) \\
& =  \sum_{n=N_0}^{N(x)} \sum_{l=0}^{l_0-1} c_l (B_n - \delta A_n - \delta )^{l+1/2}  + O(1), \qquad x\nearrow 2.
\end{align*}
Therefore, $g(x)$ can be written with error $O(1)$ as a linear combination of sums of fractional powers of $B_n - \delta A_n - \delta$,
\[
g(x) = \sum_{l=0}^{l_0-1} c_l S_l(x) + O(1), \quad x \nearrow 2,
\]
where
\[
S_l (x) = \sum_{n=N_0}^{N(x)} (B_n - \delta A_n - \delta)^{l+\frac 12}.
\]
If $l_0 -1 = 1/\beta - 1/2$, then
\[
S_{l_0}(x) \le  \sum_{n=N_0}^{N(x)} O( n^{-1}) = O(\log N(x)) = O(\log(2-x)), \quad x \nearrow 2.
\]
It remains to describe the asymptotics of $S_l(x)$ for $l < 1/\beta - 1/2$.

We note that $S_l(x)$ is an increasing function of $x$ and consider that function on the subsequence $(x_N)_{N=N_0}^\infty$ determined so that $\delta_N = 2 - x_N$ obeys
\[
B_N - \delta_N A_N - \delta_N = 0.
\]
In this definition we think of $N$ as the independent variable and $\delta_N$ as a sequence in $N$; we will continue this point of view for a while, as we work with objects that only depend on $x$ through $N=N(x)$. We now construct an asymptotic expansion of $N$ in terms of $\delta_N$.

\begin{lemma}\label{lemmaNdeltaN}
For any exponent $s>0$, $N^s$ has an expansion in terms of $\delta_N$ of the form
\begin{equation}\label{NdeltaNrep0}
N^s =  \sum_{i=1}^I \tilde C_i \delta_{N}^{-\xi_i} + O(1), \qquad N \to \infty,
\end{equation}
with $s/\beta = \xi_1 > \xi_2 > \dots > \xi_I >0$, $\tilde C_1 = C_1^{s/\beta}$, and some $\tilde C_2, \dots, \tilde C_I \in \mathbb{R}$.
\end{lemma}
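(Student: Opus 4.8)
The plan is to invert the relation defining the subsequence $\delta_N$, namely $B_N-\delta_N A_N-\delta_N=0$, i.e.\ $\delta_N=B_N/(1+A_N)$, so as to express $N^s$ as a function of $\delta_N$. Writing $u=N^{-1}$ and inserting the explicit forms \eqref{Bnrep}, \eqref{Anrep} gives
\[
\delta_N=C_1u^{\beta}\,\frac{1+\sum_{l=2}^{L}(C_l/C_1)\,u^{\beta_l-\beta}}{1+D_1u^{\beta}+\sum_{l=2}^{L}D_l\,u^{\beta_l}}=:C_1\,u^{\beta}P(u),\qquad P(0)=1 .
\]
I will work throughout with \emph{generalized power series}, by which I mean functions of one variable admitting, to every order, an asymptotic expansion in powers $x^{\gamma}$ with $\gamma$ ranging over a fixed finitely generated subsemigroup $\Sigma$ of $(\mathbb{R}_{\ge0},+)$; any such $\Sigma$ is discrete (it meets $[0,M]$ in a finite set for every $M$), and the class is closed under sums, products, reciprocals of series with nonvanishing constant term, real powers of series with positive constant term, and substitution of a series with vanishing constant term into another. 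In particular $P$ is such a series, with exponents in the semigroup generated by $\beta$, the $\beta_l$ and the $\beta_l-\beta$. Since $C_1>0$, one checks that $\delta_N$ is eventually strictly decreasing to $0$, so on a tail $N$ is a function of $\delta_N$.

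The key step is to expand $u$ as a generalized power series in $v:=\delta_N^{1/\beta}$. Taking $\beta$-th roots in the display above, $v=C_1^{1/\beta}u\,(1+\psi(u))$ where $\psi(u)=P(u)^{1/\beta}-1$ is a generalized power series with $\psi(0)=0$ and $\psi(x)=O(x^{\gamma_1})$, $\gamma_1:=\min(\Sigma\setminus\{0\})>0$. Rearranging to $u=C_1^{-1/\beta}v\,(1+\psi(u))^{-1}$ and iterating, $w_0=C_1^{-1/\beta}v$, $w_{k+1}=C_1^{-1/\beta}v\,(1+\psi(w_k))^{-1}$, an induction on $k$ shows that each $w_k$ is a generalized power series in $v$ with leading term $C_1^{-1/\beta}v$ and that $u-w_k=O(v^{c_k})$ for some exponents $c_k\to\infty$. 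Letting $k\to\infty$ yields an asymptotic expansion of $u$ to all orders, i.e.\ $u$ is a generalized power series in $v$ with leading term $C_1^{-1/\beta}v$. Consequently, from $u=C_1^{-1/\beta}v\,P(u)^{-1/\beta}$,
\[
N^s=u^{-s}=C_1^{s/\beta}\,v^{-s}\,P\!\big(u(v)\big)^{s/\beta},
\]
and $P(u(v))^{s/\beta}$ is a generalized power series in $v$ with value $1$ at $v=0$.

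To finish, expand $P(u(v))^{s/\beta}=1+\sum_{0<\rho\le s}\phi_{\rho}v^{\rho}+O(v^{\rho^{*}})$, where $\rho^{*}$ is the least exponent exceeding $s$ in the relevant (discrete) semigroup, so that the sum has finitely many terms. Multiplying by $C_1^{s/\beta}v^{-s}$ and using that the $\rho=s$ term (if present) and the $O(v^{\rho^{*}-s})$ remainder are both $O(1)$,
\[
N^s=C_1^{s/\beta}\,v^{-s}+\sum_{0<\rho<s}C_1^{s/\beta}\phi_{\rho}\,v^{-(s-\rho)}+O(1).
\]
Since $v^{-s}=\delta_N^{-s/\beta}$ and $v^{-(s-\rho)}=\delta_N^{-(s-\rho)/\beta}$, this is \eqref{NdeltaNrep0} with $\xi_1=s/\beta$, $\tilde C_1=C_1^{s/\beta}$, and $\xi_i=(s-\rho_i)/\beta>0$, $\tilde C_i=C_1^{s/\beta}\phi_{\rho_i}$ for the finitely many exponents $\rho_i\in(0,s)$ in the semigroup. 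I expect the main work to be the bookkeeping underlying the manipulations with generalized power series — verifying that reciprocals, fractional powers, and substitutions all keep the exponents inside one fixed finitely generated (hence discrete) semigroup, so that truncation above any threshold leaves only finitely many terms and the inversion iteration is legitimate; the discreteness of finitely generated subsemigroups of $(\mathbb{R}_{\ge0},+)$ is exactly what makes the argument robust when $\beta$ and the $\beta_l$ are irrational or rationally independent.
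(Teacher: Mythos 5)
Your proof is correct, and it reaches the conclusion by a route that is organized differently from the paper's. The paper also starts from $\delta_N=B_N/(1+A_N)=C_1N^{-\beta}(1+O(N^{-\epsilon}))$ with $\epsilon=\min(\beta_2-\beta,\beta)$, but instead of inverting the series it runs a reverse induction on $m$, proving that $N^s$ admits a representation $\sum_i\tilde C_i\delta_N^{-\xi_i}+O(N^{m\epsilon})$ and improving the error by one factor of $N^{-\epsilon}$ at each step: one re-expands each $\delta_N^{-\xi_i}$ back into powers of $N$, matches the $N^s$ term, observes the leftover powers must be $O(N^{m\epsilon})$, and converts them back into powers of $\delta_N$ at the cost of an $O(N^{(m-1)\epsilon})$ error. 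That induction is exactly an iterative scheme for the inversion you carry out explicitly via the fixed point $u=C_1^{-1/\beta}v\,(1+\psi(u))^{-1}$ with $v=\delta_N^{1/\beta}$; the paper's version trades your generalized-power-series calculus (closure under products, reciprocals, real powers, and substitution within one finitely generated, hence discrete, exponent semigroup) for bare-hands bookkeeping with $O(N^{m\epsilon})$ errors, and so avoids having to set up that formalism. Your version is more systematic and in fact yields an expansion to all orders rather than just to $O(1)$, but the closure-under-substitution verification you defer is genuinely the crux: you should check that substituting $w_k$ (leading exponent $1$) into $\psi$ (exponents in the semigroup generated by $\beta$, the $\beta_l$, and the $\beta_l-\beta$) keeps all exponents inside the single finitely generated semigroup $\langle 1,\beta,\beta_2,\dots,\beta_L,\beta_2-\beta,\dots,\beta_L-\beta\rangle$, and that the contraction estimate $\psi(u)-\psi(w_k)=O(v^{\gamma_1-1})\,|u-w_k|$ follows from the mean value theorem applied to the explicit smooth function $\psi$ on $(0,u_0)$. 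Both checks go through, so the argument is sound.
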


\begin{proof}
Let $\epsilon = \min (\beta_2 - \beta, \beta)$.  We will prove by reverse induction that for every nonnegative integer $m$, there is an asymptotic expansion of the form
\begin{equation}\label{NdeltaNrep}
N^s =  \sum_{i=1}^I \tilde C_i \delta_N^{-\xi_i} + O(N^{m\epsilon}), \qquad N \to \infty.
\end{equation}
For $m \ge s/\epsilon$ this is trivial, which provides the basis of induction. Let us assume there is such an expansion for some $m$. Let us first observe that, by $\delta_N = \frac{B_N}{A_N+1}$, $\delta_N$ can be written as a finite linear combination of negative powers of $N$ to any order $O(N^c)$ and
\[
\delta_N = \frac{B_N}{A_N + 1} = C_1 N^{-\beta} ( 1 + O(N^{-\epsilon}))
\]
and therefore for any $t$,
\begin{equation}\label{eqrev43}
\delta_N^{-t/\beta} = C_1^{-t/\beta} N^{t} (1 + O(N^{-\epsilon})).
\end{equation}
Expanding each $\delta_N^{-\xi_i}$ occuring in \eqref{NdeltaNrep} as a linear combination of powers of $N$ to order $O(N^{(m-1)\epsilon})$, the sum from \eqref{NdeltaNrep} can be expanded as
\[
\sum_{i=1}^I \tilde C_i \delta_N^{-\xi_i} = \sum_{j=0}^J c_j N^{t_j} + O(N^{(m-1)\epsilon}).
\]
Comparing this equality with the inductive hypothesis \eqref{NdeltaNrep}, we see that one of the terms matches $N^s$, let's say $c_0 N^{t_0} = N^s$, and that all other terms have to be $O(N^{m\epsilon})$, which implies $t_j \le m\epsilon$ for all $j \ge 1$. Then using \eqref{eqrev43} with $t=t_j$,
\begin{align*}
\sum_{i=1}^I \tilde C_i \delta_N^{-\xi_i}  & = N^s + \sum_{j=1}^J c_j \left(C_1^{t/\beta} \delta_N^{-t_j/\beta} + O(N^{t_j-\epsilon}) \right) + O(N^{(m-1)\epsilon}) \\
& = N^s + \sum_{j=1}^J c_j C_1^{t/\beta} \delta_N^{-t_j/\beta} + O(N^{(m-1)\epsilon})
\end{align*}
Moving all powers of $\delta_N$ to one side gives a representation of $N^s$ of the form \eqref{NdeltaNrep} to order $O(N^{(m-1)\epsilon})$, completing the inductive step.
\end{proof}

For the above choice of $x_N = 2 - \delta_N$, we can rewrite the sum $S_l$ as
\[
S_l(x_N) = \sum_{n=N_0}^N [(B_n - B_N) - \delta_N (A_n - A_N)]^{l+1/2}.
\]
The following lemma will allow us to identify the leading term in the summand.

\begin{lemma}\label{lemma32}
For any $\epsilon > 0$ and $0 < \beta < \sigma$, there exists $N_1>0$ such that for all $N, n$ with $N_1 \le  n < N$,
\begin{equation}\label{inequality1}
n^{-\sigma} - N^{-\sigma} < \epsilon ( n^{-\beta} - N^{-\beta} ).
\end{equation}
\end{lemma}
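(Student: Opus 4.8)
The plan is to reduce \eqref{inequality1} to a single elementary estimate on the ratio of the two differences. First I would use the integral representations, valid for $n < N$,
\[
n^{-\sigma} - N^{-\sigma} = \sigma \int_n^N t^{-\sigma-1}\,dt, \qquad n^{-\beta} - N^{-\beta} = \beta \int_n^N t^{-\beta-1}\,dt,
\]
noting that both quantities are strictly positive because $n < N$ and $\beta, \sigma > 0$.

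The key step is a pointwise comparison of the two integrands on $[n,N]$. Since $\beta - \sigma < 0$ and $t \ge n$, we have $t^{\beta-\sigma} \le n^{\beta-\sigma}$, hence $t^{-\sigma-1} = t^{\beta-\sigma}\,t^{-\beta-1} \le n^{\beta-\sigma}\,t^{-\beta-1}$. Integrating over $[n,N]$ and using the representations above yields
\[
n^{-\sigma} - N^{-\sigma} \le \frac{\sigma}{\beta}\,n^{\beta-\sigma}\bigl(n^{-\beta} - N^{-\beta}\bigr).
\]

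It then remains to absorb the prefactor into $\epsilon$: since $\beta - \sigma < 0$, the quantity $\frac{\sigma}{\beta}\,n^{\beta-\sigma}$ is decreasing in $n$ and tends to $0$, so I would take $N_1$ large enough that $\frac{\sigma}{\beta}\,N_1^{\beta-\sigma} < \epsilon$. Then for every $n \ge N_1$ and every $N > n$ the prefactor is $< \epsilon$, and since $n^{-\beta} - N^{-\beta} > 0$ the inequality stays strict, giving exactly \eqref{inequality1}. I do not anticipate any real obstacle here; the only point needing a touch of care is preserving strictness, which is immediate from $n^{-\beta} - N^{-\beta} > 0$ whenever $n < N$. (Equivalently, one could set $p = \sigma/\beta > 1$, write $a = n^{-\beta} > b = N^{-\beta}$, and invoke the elementary bound $a^p - b^p \le p\,a^{p-1}(a-b)$, which reproduces the same estimate without explicit integrals.)
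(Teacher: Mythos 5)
Your proof is correct. The paper argues differently in form but similarly in substance: it introduces the $\epsilon$-dependent auxiliary function $h(x) = \epsilon x^{-\beta} - x^{-\sigma}$, checks that $h'(x) < 0$ for $x$ beyond the threshold determined by $\sigma = \epsilon \beta x^{\sigma-\beta}$, and then reads off \eqref{inequality1} from $h(n) > h(N)$. You instead first establish the $\epsilon$-free quantitative bound
\[
\frac{n^{-\sigma} - N^{-\sigma}}{n^{-\beta} - N^{-\beta}} \le \frac{\sigma}{\beta}\, n^{\beta - \sigma},
\]
via the integral representation and a pointwise comparison of integrands (equivalently, the convexity estimate $a^p - b^p \le p\,a^{p-1}(a-b)$ with $p = \sigma/\beta$), and only then absorb the decaying prefactor into $\epsilon$. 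Unwinding both arguments, they rest on the same pointwise inequality $\sigma t^{-\sigma-1} \le \text{const}\cdot \beta t^{-\beta-1}$ integrated over $[n,N]$, so neither is more general; what your version buys is an explicit, $\epsilon$-independent rate for the ratio, which is slightly more informative than the paper's bare monotonicity statement, at the cost of one extra displayed estimate. Your handling of strictness (via $n^{-\beta} - N^{-\beta} > 0$ and the strict choice of $N_1$) is fine.
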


\begin{proof}
We begin by noting that the function
\[
h(x) = \epsilon x^{-\beta} - x^{-\sigma}
\]
has
\[
h'(x) = \sigma x^{-\sigma - 1} - \epsilon \beta x^{-\beta -1} = ( \sigma - \epsilon x^{\sigma - \beta} ) x^{-\sigma -1}
\]
so, if we pick $N_1$ by the condition $\sigma = \epsilon N_1^{\sigma - \beta}$, then $h'(x) < 0$ for $x > N_1$. Therefore, for any $N, n$ with $N_1 \le n < N$, $h(n) > h(N)$, which can be rearranged into the form \eqref{inequality1}.
\end{proof}

\begin{cor}
For any $\epsilon > 0$, there exists $N_1$ such that for $N \ge N_1$, for any $n$ with $N_1 \le n \le N-1$,
\[
\left\lvert [(B_n - B_N) - \delta_N(A_n - A_N)] - C_1 (n^{-\beta} - N^{-\beta}) \right\rvert < \epsilon (n^{-\beta} - N^{-\beta}).
\]
where $C_1$ is the constant from \eqref{Bnrep}.
\end{cor}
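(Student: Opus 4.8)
\emph{Proof sketch.} The plan is to expand the left-hand side directly using \eqref{Bnrep} and \eqref{Anrep} and then estimate the resulting remainder terms with Lemma~\ref{lemma32} and the already-known size of $\delta_N$. Since $B_n - B_N = C_1(n^{-\beta} - N^{-\beta}) + \sum_{l=2}^L C_l(n^{-\beta_l} - N^{-\beta_l})$ and similarly for $A_n - A_N$, the quantity inside the absolute value equals
\[
\sum_{l=2}^L C_l\bigl(n^{-\beta_l} - N^{-\beta_l}\bigr) \;-\; \delta_N\,(A_n - A_N),
\]
so it suffices to bound each of these two terms by $\tfrac\epsilon2 (n^{-\beta} - N^{-\beta})$ for $N$ and $n$ large; note $n^{-\beta} - N^{-\beta} > 0$ whenever $n < N$.

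For the first term, $\beta_l > \beta$ and $n^{-\beta_l} - N^{-\beta_l} \ge 0$ for $l \ge 2$, so Lemma~\ref{lemma32} (with $\sigma = \beta_l$ and a small parameter $\epsilon'$ to be fixed below) gives $n^{-\beta_l} - N^{-\beta_l} < \epsilon'(n^{-\beta} - N^{-\beta})$ for all large $n < N$; summing over the finitely many $l$ bounds the first term by $\bigl(\sum_{l=2}^L |C_l|\bigr)\epsilon'(n^{-\beta} - N^{-\beta})$. Applying the same lemma to the higher-order terms of $A_n - A_N$ gives $|A_n - A_N| \le C'(n^{-\beta} - N^{-\beta})$ with $C' = D_1 + \epsilon'\sum_{l=2}^L |D_l|$ independent of $n, N$. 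On the other hand, the previously established asymptotics $\delta_N = B_N/(A_N + 1) = C_1 N^{-\beta}(1 + o(1))$ give $\delta_N \le 2C_1 N^{-\beta}$ for $N$ large, and $N^{-\beta} \to 0$; hence $|\delta_N(A_n - A_N)| \le 2C_1 C' N^{-\beta}(n^{-\beta} - N^{-\beta}) < \tfrac\epsilon2(n^{-\beta} - N^{-\beta})$ once $N$ is large enough.

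Finally one fixes $\epsilon' = \epsilon/\bigl(2(1 + \sum_{l=2}^L |C_l|)\bigr)$ and takes $N_1$ larger than all the (finitely many) thresholds produced by the invocations of Lemma~\ref{lemma32} and by the ``$N$ large'' clauses above; then the two estimates add to give the claimed inequality for all $N \ge N_1$ and $N_1 \le n \le N - 1$. There is no real obstacle here: the content is entirely in Lemma~\ref{lemma32} and in the asymptotics of $\delta_N$, and the only care needed is to invoke that lemma a fixed finite number of times and take the maximum of the resulting thresholds.
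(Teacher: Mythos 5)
Your proposal is correct and follows essentially the same route as the paper: apply Lemma~\ref{lemma32} to every higher-order term of $B_n$ and $A_n$, and absorb the one term that Lemma~\ref{lemma32} cannot handle (the leading $D_1 n^{-\beta}$ contribution to $A_n - A_N$) using the fact that it is multiplied by $\delta_N = C_1 N^{-\beta}(1+o(1)) \to 0$. The bookkeeping with $\epsilon'$ and the finitely many thresholds is exactly what the paper leaves implicit.
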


\begin{proof}
Lemma~\ref{lemma32} applies to all the terms from \eqref{Bnrep} and \eqref{Anrep}, except for the $D_1 n^{-\beta}$ term from \eqref{Anrep} if $D_1 \neq 0$. However, in the quantity  $(B_n - B_N) - \delta(A_n - A_N)$, that term is multiplied by $\delta$, which we can make arbitrarily small by making $N$ sufficiently large. This completes the proof.
\end{proof}

This will enable us to apply the power series expansion
\begin{equation}\label{powerexp}
(z_0 + z)^{l+1/2} = z_0^{l+1/2} \sum_{k=0}^\infty \binom{l+1/2}{k} \left( \frac{z}{z_0} \right)^k
\end{equation}
with $z_0 = C_1(n^{-\beta} - N^{-\beta})$ and $z =(B_n - B_N) - \delta_N(A_n - A_N)$. To know which terms to keep individually and which to collect into a remainder term, we need the following lemmas.

\begin{lemma}\label{lemma34}
If $0 < \beta < \gamma$ and $N>0$, then
\begin{equation}\label{xfunclemma}
t\mapsto \frac{t^{-\gamma} - N^{-\gamma}}{t^{-\beta} - N^{-\beta}}
\end{equation}
is a strictly decreasing function of $t \in (0,N)$.
\end{lemma}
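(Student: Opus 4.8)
First I would rewrite the quotient as a weighted average of a monotone function. Using the identity $t^{-\mu} - N^{-\mu} = \mu \int_t^N s^{-\mu - 1}\, ds$, valid for $\mu > 0$ and $0 < t < N$, with $\mu = \gamma$ and $\mu = \beta$, one gets
\[
\frac{t^{-\gamma} - N^{-\gamma}}{t^{-\beta} - N^{-\beta}} = \frac{\gamma}{\beta}\cdot \frac{\int_t^N s^{-\gamma-1}\, ds}{\int_t^N s^{-\beta-1}\, ds} = \frac{\gamma}{\beta}\, \Psi(t), \qquad \Psi(t) = \frac{\int_t^N \phi(s)\, d\nu(s)}{\int_t^N d\nu(s)},
\]
where $\phi(s) = s^{\beta - \gamma}$ and $d\nu(s) = s^{-\beta - 1}\, ds$. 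Since $\gamma/\beta$ is a positive constant, it is enough to prove that $\Psi$ is strictly decreasing on $(0,N)$; note that $\Psi(t)$ is exactly the $\nu$-average of $\phi$ over $[t,N]$, and $\phi$ is strictly decreasing on $(0,\infty)$ because $\beta - \gamma < 0$.

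Next I would differentiate. Setting $A(t) = \int_t^N \phi\, d\nu$, $B(t) = \int_t^N d\nu$, and $w(t) = t^{-\beta-1} > 0$, the fundamental theorem of calculus gives $A'(t) = -\phi(t) w(t)$ and $B'(t) = -w(t)$, so
\[
\Psi'(t) = \frac{A'(t) B(t) - A(t) B'(t)}{B(t)^2} = \frac{w(t)}{B(t)^2}\bigl( A(t) - \phi(t) B(t) \bigr) = \frac{w(t)}{B(t)^2} \int_t^N \bigl( \phi(s) - \phi(t) \bigr)\, d\nu(s).
\]
For $t \in (0,N)$ the interval $[t,N]$ carries positive $\nu$-measure, and on it $\phi(s) - \phi(t) \le 0$ with strict inequality whenever $s > t$; hence the final integral is strictly negative and $\Psi'(t) < 0$ throughout $(0,N)$. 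This shows that $\Psi$, and therefore the function \eqref{xfunclemma}, is strictly decreasing.

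I do not expect a real obstacle here: the only slightly delicate point is choosing the representation of the quotient as a weighted average, after which the monotonicity is transparent and the differentiation is routine. As an alternative route, the substitution $v = (t/N)^{-\beta}$ (which is strictly decreasing in $t$) reduces the claim to the elementary statement that $v \mapsto (v^{\gamma/\beta} - 1)/(v-1)$ is strictly increasing on $(1,\infty)$; this in turn follows either by the same averaging argument (it equals the mean of $\tfrac{\gamma}{\beta} u^{\gamma/\beta-1}$ over $[1,v]$) or by a direct computation showing that the numerator $(\tfrac{\gamma}{\beta}-1)v^{\gamma/\beta} - \tfrac{\gamma}{\beta}v^{\gamma/\beta-1} + 1$ of the derivative vanishes at $v=1$ and has positive derivative for $v>1$.
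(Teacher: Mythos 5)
Your argument is correct. The integral representation $t^{-\mu}-N^{-\mu}=\mu\int_t^N s^{-\mu-1}\,ds$ is applied correctly, the quotient-rule computation of $\Psi'$ is right, and the sign argument (the $\nu$-average of the strictly decreasing $\phi(s)=s^{\beta-\gamma}$ over $[t,N]$ strictly exceeds $\phi(t)$ being impossible --- rather, the integrand $\phi(s)-\phi(t)$ is nonpositive and strictly negative on a set of positive $\nu$-measure) does yield $\Psi'(t)<0$ on $(0,N)$.

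Your main route is genuinely different from the paper's. The paper substitutes $y=t^{-\beta}$, $a=N^{-\beta}$, $c=\gamma/\beta>1$, reducing the claim to the statement that $y\mapsto(y^c-a^c)/(y-a)$ is strictly increasing on $(a,\infty)$; it then computes the derivative and shows the numerator $(c-1)y^c-cy^{c-1}a+a^c$ is positive by the weighted arithmetic--geometric mean inequality $y^{c-1}a<\tfrac{c-1}{c}y^c+\tfrac1c a^c$. Your ``alternative route'' at the end is essentially this same reduction (up to normalizing by $N$), differing only in how the positivity of the derivative's numerator is verified (a second differentiation from the value $0$ at $v=1$, versus AM--GM). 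Your primary argument, by contrast, recasts the quotient as a weighted average of a monotone function, which is more conceptual and immediately suggests the general principle at work (a one-sided average of a decreasing function decreases as the averaging window is pushed to the right); the paper's version is a shorter, self-contained computation resting on a classical inequality. Both are elementary and complete.
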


\begin{proof}
By substituting $y=t^{-\beta}$, $a=N^{-\beta}$, $c=\gamma/\beta$, the statement assumes the equivalent form that if $c>1$, the function
\begin{equation}\label{yfunclemma}
y \mapsto \frac{y^c - a^c}{y-a}
\end{equation}
is strictly increasing on $(a,\infty)$. The derivative of this function is
\[
\frac{(c-1)y^c - c y^{c-1} a + a^c}{(y-a)^2}
\]
which is strictly positive on $(a,\infty)$ by the weighted arithmetic--geometric mean inequality:
\[
y^{c-1} a  < \frac{c-1}c y^c + \frac 1c a^c.
\]
Thus, \eqref{yfunclemma} is strictly increasing on $(a,\infty)$, and \eqref{xfunclemma} strictly decreasing on $(0,N)$.
\end{proof}

\begin{lemma}\label{lemma36}
Let $\gamma_1, \dots, \gamma_J > \beta$ and define
\[
\lambda =  \left(l + \frac 12\right) \beta+ \sum_{j=1}^J (\gamma_j - \beta).
\]
Then the integral
\begin{equation}\label{intlim}
I = \int_0^1 (x^{-\beta} - 1)^{l+1/2} \prod_{j=1}^J \frac{ x^{-\gamma_j} - 1}{ x^{-\beta} - 1} dx
\end{equation}
is finite if and only if $\lambda < 1$. Moreover, as $N \to \infty$,
\begin{equation}\label{sumintlim2}
\sum_{n=N_1}^{N-1} (n^{-\beta} - N^{-\beta})^{l+1/2} \prod_{j=1}^J \frac{n^{-\gamma_j} - N^{-\gamma_j }}{n^{-\beta} - N^{-\beta}} = \begin{cases} 
N^{1-\lambda} I + O(1) & \lambda < 1 \\
O(\ln N) & \lambda = 1  \\
O(1) & \lambda > 1
\end{cases}.
\end{equation}
\end{lemma}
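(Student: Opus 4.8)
\emph{Proof strategy.} The plan is to recognize the sum as a Riemann sum for $I$ after rescaling $n$ by $N$. Writing $n^{-\mu}-N^{-\mu}=N^{-\mu}\bigl((n/N)^{-\mu}-1\bigr)$ for each exponent $\mu\in\{\beta,\gamma_1,\dots,\gamma_J\}$, one checks that the $n$-th term of the sum in \eqref{sumintlim2} equals $N^{-\lambda}\Phi(n/N)$, where
\[
\Phi(x)=(x^{-\beta}-1)^{l+\frac12}\prod_{j=1}^{J}\frac{x^{-\gamma_j}-1}{x^{-\beta}-1},\qquad x\in(0,1),
\]
and $\lambda=(l+\tfrac12)\beta+\sum_{j}(\gamma_j-\beta)$ is exactly the exponent that makes the powers of $N$ cancel. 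By Lemma~\ref{lemma34} applied with $N=1$, each ratio $\frac{x^{-\gamma_j}-1}{x^{-\beta}-1}$ is strictly decreasing on $(0,1)$, and $(x^{-\beta}-1)^{l+1/2}$ clearly is as well, so $\Phi$ is positive and strictly decreasing on $(0,1)$. A short Taylor expansion at the endpoints gives $\Phi(x)\to 0$ as $x\to 1^-$ and $\Phi(x)\sim x^{-\lambda}$ as $x\to 0^+$, so $x\mapsto x^{\lambda}\Phi(x)$ extends to a bounded continuous function on $[0,1]$; in particular there is $C$ with $\Phi(x)\le Cx^{-\lambda}$ on $(0,1)$. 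Since $\Phi$ is positive and continuous on $(0,1)$ with its only singularity at $x=0$, where $\Phi(x)\sim x^{-\lambda}$, the integral $I=\int_0^1\Phi$ is finite if and only if $\lambda<1$; this is the first assertion.

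For the three cases of \eqref{sumintlim2}, I would dispatch $\lambda\ge 1$ by the crude bound: the $n$-th summand is $N^{-\lambda}\Phi(n/N)\le N^{-\lambda}C(n/N)^{-\lambda}=Cn^{-\lambda}$, so the sum is $\le C\sum_{n=N_1}^{N-1}n^{-\lambda}$, which is $O(\ln N)$ for $\lambda=1$ and $O(1)$ for $\lambda>1$ (a convergent tail). The main case is $\lambda<1$, where $I$ is finite and one needs the sharper conclusion $N^{1-\lambda}I+O(1)$. Here I would use the monotonicity of $\Phi$ to sandwich each term: for $n\ge N_1$,
\[
\int_{n/N}^{(n+1)/N}\Phi(x)\,dx\le \frac1N\Phi(n/N)\le \int_{(n-1)/N}^{n/N}\Phi(x)\,dx,
\]
and summing over $N_1\le n\le N-1$ the integral limits telescope to give
\[
\int_{N_1/N}^{1}\Phi(x)\,dx\le \frac1N\sum_{n=N_1}^{N-1}\Phi(n/N)\le \int_{(N_1-1)/N}^{(N-1)/N}\Phi(x)\,dx\le I.
\]
The left-hand side equals $I-\int_0^{N_1/N}\Phi$, and $\int_0^{N_1/N}\Phi\le\frac{C}{1-\lambda}(N_1/N)^{1-\lambda}=O(N^{\lambda-1})$, so $\frac1N\sum_{n=N_1}^{N-1}\Phi(n/N)=I+O(N^{\lambda-1})$. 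Multiplying by $N^{1-\lambda}$ turns the error into $O(1)$, yielding $\sum_{n=N_1}^{N-1}(\cdots)=N^{1-\lambda}I+O(1)$.

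The only point requiring care is the case $\lambda<1$: one must keep the error term at the level of $O(1)$ rather than a soft $o(N^{1-\lambda})$, which is precisely why the pointwise bound $\Phi(x)\le Cx^{-\lambda}$ valid on all of $(0,1)$ and the telescoping inequalities are used instead of merely invoking convergence of Riemann sums. No input beyond Lemma~\ref{lemma34} and the monotonicity and endpoint behavior of $\Phi$ is needed.
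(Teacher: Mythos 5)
Your proof is correct and follows essentially the same route as the paper's: both treat the sum as a Riemann-type approximation of the integral, using the monotonicity supplied by Lemma~\ref{lemma34} to control the sum--integral discrepancy and then estimating the tail $\int_0^{N_1/N}\Phi \sim (N_1/N)^{1-\lambda}$ to get the $O(1)$ error. The only cosmetic differences are that you rescale by $N$ before comparing (rather than substituting $x=t/N$ afterwards), you dispatch the cases $\lambda\ge 1$ by the direct pointwise bound $\Phi(n/N)\le C(n/N)^{-\lambda}$ instead of via the asymptotics of the divergent integral, and your telescoping sandwich makes explicit the ``sum equals integral plus $O(1)$'' step that the paper merely asserts.
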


\begin{proof}
The integrand is a continuous function on $(0,1)$ which converges to $0$ as $x\to 1$ and behaves asymptotically as $x^{-\lambda}$ as $x \to 0$; therefore, the integral is finite if and only if $\lambda <1$.

Uniform boundedness of the summand and its monotonicity (by Lemma~\ref{lemma34}) imply that the sum is a good approximation of the integral,
\begin{align*}
& \sum_{n=N_1}^{N-1} (n^{-\beta} - N^{-\beta})^{l+1/2} \prod_{j=1}^J \frac{n^{-\gamma_j} - N^{-\gamma_j }}{n^{-\beta} - N^{-\beta}}   \\
& = \int_{N_1}^{N} (t^{-\beta} - N^{-\beta})^{l+1/2} \prod_{j=1}^J \frac{t^{-\gamma_j} - N^{-\gamma_j }}{t^{-\beta} - N^{-\beta}} dt + O(1), \qquad N\to \infty \\
& = N^{1-\lambda} \int_{N_1/N}^{1} (x^{-\beta} - 1)^{l+1/2} \prod_{j=1}^J \frac{x^{-\gamma_j} - 1}{x^{-\beta} - 1} dx + O(1), \qquad N\to \infty
\end{align*}
where in the last line we used the substitution $x=t/N$.

If $\lambda < 1$, subtracting $N^{1-\lambda} I$ gives
\begin{align*}
N^{1-\lambda} \int_0^{N_1/N} (x^{-\beta} - 1)^{l+1/2} \prod_{j=1}^J \frac{x^{-\gamma_j} - 1}{x^{-\beta} - 1} dx 
& \sim N^{1-\lambda} \int_0^{N_1/N} x^{-\lambda} dx \\
& \sim N^{1-\lambda} \left(\frac{N_1}N \right)^{1-\lambda} \\
& \sim 1
\end{align*}
which proves the first case in \eqref{sumintlim2}. If $\lambda \ge 1$, the integral diverges as the lower limit goes to $0$, so the asymptotics of the integral is determined by the asymptotics of the integrand at the singular point,
\[
\int_{N_1/N}^{1} (x^{-\beta} - 1)^{l+1/2} \prod_{j=1}^J \frac{x^{-\gamma_j} - 1}{x^{-\beta} - 1} dx  \sim \int_{N_1/N}^1 x^{-\lambda} dx
\]
which implies
\[
\int_{N_1/N}^{1} (x^{-\beta} - 1)^{l+1/2} \prod_{j=1}^J \frac{x^{-\gamma_j} - 1}{x^{-\beta} - 1} dx  \sim 
\begin{cases}
\left( \frac{N_1}N \right)^{1-\lambda} & \lambda > 1 \\
- \ln \frac{N_1}N & \lambda = 1
\end{cases}
\]
which implies \eqref{sumintlim2}.
\end{proof}

\begin{proof}[Proof of Theorem~\ref{thm3}(b)]
We begin by verifying the non-subordinacy condition. By Theorem~\ref{thm1}, the subordinate solution $(v_n)_{n=1}^\infty$ has asymptotic behavior
\[
v_n  = - \sum_{n=N_0}^N \arccosh \frac{2-b_n}{2a_n} + O( \log N)
\]
and since $\frac{2-b_n}{2a_n} - 1 \sim  n^{-\beta}$ and $0 < \beta < 2$, we have $\Gamma_n \sim n^{-\beta/2}$ so
\[
\log v_n \sim - N^{1-\frac \beta 2}
\]
Therefore, $v_n$ decays superpolynomially, so $(v_n)_{n=1}^\infty \in \ell^2$. Thus, $\mu(\{2\}) = 0$ implies that the the $\ell^2$ solution is not the Dirichlet solution, so the Dirichlet solution is not subordinate. The conditions of Theorem~\ref{thm2} are therefore satisfied.

As described above, the function $g(x)$ can be expressed with error $O(\log \delta)$ as a linear combination of power sums $S_l(x)$, and we will begin by considering those sums at points $x=x_N$.

Using \eqref{Bnrep}, \eqref{Anrep}, and the power series expansion \eqref{powerexp} with $z_0 = n^{-\beta} - N^{-\beta}$ and $z =(B_n - B_N) - \delta_N(A_n - A_N)$, there will be finitely many terms corresponding to $\lambda < 1$, since each additional factor contributes at least $\beta_2 - \beta$ towards the exponent $\lambda$. Therefore, applying Lemma~\ref{lemma36} to each of the terms, we get a finite sum plus a logarithmic error term,
\[
S_l(x_N) = \sum_{i=1}^{i_l} Q_{i,l}  I_{i,l} \delta_N^{m_{i,l}} N^{1-\lambda_{i,l}} + O(\log N)
\]
where $0 < \lambda_{1,l} < \dots < \lambda_{i_l,l} < 1$, $Q_{i,l}$ is determined by the coefficients in \eqref{Bnrep}, \eqref{Anrep}, \eqref{powerexp}, $m_{i,l}$ are nonnegative integers, and $I_{i,l}$ are integrals of the form \eqref{intlim}. Using the asymptotic expansion of $N^{1-\lambda_{i,l}}$ \eqref{NdeltaNrep0}, one obtains an asymptotic expansion
\[
S_l(x_N) = \sum_{i=1}^{\tilde i_l} \tilde Q_{i,l} \delta_N^{-\kappa_i}  + O(\log N)
\]
where $\kappa_i < 1/ \beta$ because the asymptotic expansion \eqref{NdeltaNrep0} of $N^{1-\lambda_{i,l}}$ starts at order $\delta_N^{(1-\lambda_{i,l})/\beta}$.

Summing in $l$ from $0$ to $l_0-1$ we obtain
\begin{equation}\label{asymp300}
g(x_N) = \sum_{i=1}^{\tilde I} Q_{i}  \delta_N^{-\kappa_i}  + O(\log N)
\end{equation}
with $1/\beta > \kappa_1 > \dots > \kappa_{\tilde I} > 0$. 

Since
\[
\delta_{N+1}^{-1/\beta} - \delta_N^{-1/\beta} = C^{-1/\beta}  + O(n^{-\epsilon}) = O(1),
\]
the same is true for $\delta_{N+1}^{-t} - \delta_N^{-t}$ for any $0 < t < 1/\beta$. Therefore, $g(x_N)$, $g(x_{N+1})$ obey the same asymptotics to order $O(\log (2-x))$, and since $g(x_N) \le g(x) \le g(x_{N+1})$ if $N=N(x)$, \eqref{asymp300} implies that $g(x)$ also obeys the same asymptotics with $\delta_N$ replaced by $\delta$, which proves \eqref{asymptotic1000}.

The leading term in the asymptotic expansion is obtained from the sum \eqref{sumintlim2} with $l=0$, $J=0$, $m=0$, and
\[
Q_{1} I_{1} N^{1-\lambda_{1}} = C_1^{1/2} N^{1-\beta/2} \int_0^1 (x^{-\beta} - 1)^{1/2} dx.
\]
This integral has appeared in \cite{KreimerLastSimon09}, where the integral was reduced to a Beta function by the substitution $x^\beta = u$, so that
\[
\int_0^1 (x^{-\beta} - 1)^{1/2} dx = \frac 1\beta \int_0^1 u^{\frac 1\beta - \frac 32} (1-u)^{\frac 12} du = \frac {\Gamma\left(\frac 1\beta - \frac 12\right) \Gamma\left(\frac 32\right)} {\beta \Gamma\left(\frac 1\beta + 1\right)} = \frac {\Gamma\left(\frac 1\beta - \frac 12\right) \sqrt\pi} {2 \Gamma\left(\frac 1\beta \right)}
\]
The leading term in the asymptotic expansion \eqref{NdeltaNrep0} of $N^{1-\beta/2}$ is $C_1^{\frac 1\beta - \frac 12} \delta_N^{\frac 12 - \frac 1\beta}$ so the leading term in the asymptotic expansion of $g(x)$ is
\[
C_1^{\frac 12} C_1^{\frac 1\beta - \frac 12} \delta^{\frac 12 - \frac 1\beta}  \frac {\Gamma\left(\frac 1\beta - \frac 12\right) \sqrt\pi} {2 \Gamma\left(\frac 1\beta \right)} 
 = C_1^{\frac 1\beta}  \frac {\Gamma\left(\frac 1\beta - \frac 12\right) \sqrt\pi} {2 \Gamma\left(\frac 1\beta \right)} \delta^{\frac 12 - \frac 1\beta}
\]
which concludes the proof.
\end{proof}

\begin{proof}[Proof of Theorem~\ref{thm4}]
We write
\[
a_n = 1 - \epsilon_n, \qquad \epsilon_n = C n^{-\tau},
\]
and we introduce the function
\[
H(z) = \arccosh \frac 1{1-z} = \sum_{l=0}^\infty H_l z^{l+1/2}
\]
so that, if we denote $x = 2 - 2 \epsilon$,
\[
\gamma_n(x) = \arccosh \frac{x}{2a_n} = H\left( \frac{\epsilon_n - \epsilon}{1-\epsilon} \right).
\]
Analogously to the analysis in the proof of Theorem~\ref{thm3},
\[
g(x) = \sum_{n=N_0}^{N(x)} H\left(\frac{\epsilon_n-\epsilon}{1-\epsilon} \right) = \sum_{l=0}^{L-1} H_l S_l(x) + O(\log(2-x))
\]
where $L = \lceil \frac 1{\tau} - \frac 12 \rceil$ and
\[
S_l(x) = \sum_{n=N_0}^{N(x)} \left(\frac{\epsilon_n-\epsilon}{1-\epsilon} \right)^{l+1/2} = (1-\epsilon)^{-l-1/2} \sum_{n=N_0}^{N(x)} (\epsilon_n-\epsilon)^{l+1/2}
\]
This is the formula that makes our choice of $H$ so useful: the factor independent of $n$ will not present further difficulty, and what remains is a power sum of $\epsilon_n - \epsilon$.

We first consider power sums of $\epsilon_n - \epsilon_N$, where $N = N(x)$, and use Lemma~\ref{lemma36} (with case $J=0$) to estimate the asymptotics of these sums as
\begin{align*}
\sum_{n=N_0}^{N(x)} (\epsilon_n-\epsilon_N)^{l+1/2} & = C^{l+1/2} \sum_{n=N_0}^{N} (n^{-\tau} - N^{-\tau})^{l+1/2} \\
& = C^{l+1/2} N^{1-\tau(l+1/2)} \int_0^1 (x^{-\tau} - 1)^{l+1/2} dx + O(1) \\
& = C^{l+1/2} N^{1-\tau(l+1/2)} \frac{\Gamma\left( \frac 1\tau - l - \frac 12\right) \Gamma\left( l + \frac 32\right)}{\Gamma\left(\frac 1\tau\right)} + O(1)
\end{align*}
and since the sum is monotonic in $x$ and $(N+1)^t - N^t = O(1)$ for $t \le 1$, we obtain
\[
S_l(x) = \frac{C^{l+1/2}}{(1-\epsilon)^{l+1/2}} \frac{\Gamma\left( \frac 1\tau - l - \frac 12\right) \Gamma\left( l + \frac 32\right)}{\Gamma\left(\frac 1\tau\right)} N^{1-\tau(l+1/2)}  + O(\log \epsilon)
\]
Since $N^t = C^{t/\tau} \epsilon_N^{t/\tau}$, using again $(N+1)^t - N^t = O(1)$ gives
\[
N^t = C^{t/\tau} \epsilon^{-t/\tau} + O(1), \qquad t \in (0,1].
\]
Putting this into $S_l(x)$ and combining $S_l(x)$ into $g(x)$ gives
\begin{align*}
g(x) & = \sum_{l=0}^{L-1} H_l C^{\frac 1\tau} \frac{\Gamma\left( \frac 1\tau - l - \frac 12\right) \Gamma\left( l + \frac 32\right)}{\Gamma\left(\frac 1\tau\right)}  \frac{  \epsilon^{-\frac 1\tau + l + \frac 12}  }{(1-\epsilon)^{l+1/2}} + O(\log \epsilon)
\end{align*}
It remains to simplify the resulting expression. Expanding $(1-\epsilon)^{-l-1/2}$ and using $\Gamma(l+3/2) = (l+1/2) \Gamma(l+1/2)$ gives
\[
g(x) = \sum_{l=0}^{L-1} \sum_{k=0}^{L-1-l} H_l C^{\frac 1\tau} \frac{\Gamma\left( \frac 1\tau - l - \frac 12\right) \left( l + \frac 12\right)}{\Gamma\left(\frac 1\tau\right)} \frac{\Gamma\left(l+k+\frac 12\right)}{k! } \epsilon^{-\frac 1\tau + l + \frac 12+k} + O(\log \epsilon)
\]
so
\[
\log f(x) = - \sum_{n=0}^{L-1} R_n \epsilon^{-\frac 1\tau + \frac 12 + n} + O(\log \epsilon)
\]
where
\[
R_n = 2 C^{\frac 1\tau} \frac{\Gamma\left(n+\frac 12\right)}{\Gamma\left(\frac 1\tau\right) } \sum_{l=0}^n H_l \frac{\Gamma\left( \frac 1\tau - l - \frac 12\right) \left( l + \frac 12\right)}{(n-l)! }
\]
Since
\[
H'(z) =  \frac 1{(1-z)\sqrt{2z - z^2}}
\]
we can compare coefficients to see that
\[
\left(l+\frac 12\right) H_l  = \frac 1{\sqrt 2} \sum_{m=0}^l  \binom{-1/2}m \left( - \frac 12\right)^m = \frac 1{\sqrt 2} \sum_{m=0}^l   \frac{\Gamma\left( m+ \frac 12\right)}{2^{m} m!}
\]
so
\[
R_n =  \sqrt 2 C^{\frac 1\tau} \frac{\Gamma\left(n+\frac 12\right)}{\Gamma\left(\frac 1\tau\right) } \sum_{l=0}^n  \sum_{m=0}^l   \frac{\Gamma\left( m+ \frac 12\right)}{2^{m} m!}  \frac{\Gamma\left( \frac 1\tau - l - \frac 12\right) }{(n-l)! }
\]
Since $\epsilon = (2-x)/2$, \eqref{asymp1d} follows with $S_n = R_n 2^{\frac 1\tau - \frac 12 - n}$.
\end{proof}

\section{Polynomially decaying Verblunsky coefficients}\label{sec4}

If $\gamma$ is a probability measure supported on $[-2,2]$, the Szeg\H o mapping  \cite[Section 13.1]{OPUC2} relates $\gamma$ to a probability measure $\nu$ supported on $\partial \mathbb{D}$ which is symmetric with respect to complex conjugation ($\theta \mapsto -\theta$) and such that for $g: [-2,2] \to \mathbb{R}$,
\[
\int_{[0,2\pi]} g(2 \cos \theta) d\nu(\theta) = \int_{[-2,2]} g(x) d\gamma(x).
\]
We note that $\gamma(\{2\}) > 0$ if and only if $\nu(\{1\}) > 0$. Moreover, if $d\gamma(x) = f(x) dx + d\gamma_\s$ and $d\nu(\theta) = v(\theta) \frac{d\theta}{2\pi} + d\nu_\s$ are Lebesgue decompositions of $\gamma$ and $\nu$, then
\begin{equation}\label{4.1}
v(\theta) = 2 \pi \lvert \sin\theta\rvert f(2\cos \theta).
\end{equation}
Formulas of Geronimus express Jacobi parameters $\{a_n, b_n\}_{n=1}^\infty$ of $\gamma$ in terms of Verblunsky coefficients $\{\beta_n\}_{n=0}^\infty$ of $\nu$:
\begin{align*}
a_{n+1}^2 & = (1-\beta_{2n-1})(1-\beta_{2n}^2)(1+\beta_{2n+1}) \\
b_{n+1} & = (1-\beta_{2n-1}) \beta_{2n} - (1+\beta_{2n-1}) \beta_{2n-2}
\end{align*}
with the convention $\beta_{-1} = -1$.

In particular, if $\nu$ is the sieved measure \cite[Section 1.6]{OPUC1} obtained from  $\mu$,
\[
d\nu (\theta) = \frac 12 d\mu(2 \theta),
\]
 then $v(\theta) = w(2 \theta)$ and 
\[
\beta_n = \begin{cases} \alpha_{(n-1)/2} & n\equiv 1 \pmod{2} \\
0 &  n\equiv 0 \pmod{2}
\end{cases}
\]
Combining, we see that Jacobi parameters of $\gamma$ are given in terms of Verblunsky coefficients of $\mu$ by \eqref{Szegosieving}, that $\mu(\{1\}) > 0$ if and only if $\gamma(\{2\}) > 0$, and that
\[
w(\theta) =  2 \pi \left\lvert \sin\frac \theta 2\right\rvert f\left(2\cos \frac \theta 2\right)
\]
so
\begin{equation}\label{Szegosieving2}
\log w(\theta) = \log  f\left(2\cos \frac \theta 2\right) + O(\log \lvert\theta\rvert), \qquad \theta \to 0.
\end{equation}

The first step is to verify the absence of the mass point.

\begin{lemma}
If $\alpha_n \in \mathbb{R}$ for all $n$ and $\alpha_n <0$ for all $n\ge N_0$, then $\mu(\{1\})=0$.
\end{lemma}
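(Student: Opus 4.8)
The plan is to argue directly on the unit circle: I would use the reality of the Verblunsky coefficients to compute the orthonormal polynomials at $z=1$ in closed form, and then invoke the standard criterion for a point of $\partial\mathbb{D}$ to be a mass point of $\mu$.

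First I would record that, since $\alpha_n\in\mathbb{R}$, every $\varphi_n$ has real coefficients; this follows by induction from \eqref{Szegorecursion}, because if $\varphi_n$ has real coefficients then so does its reverse $\varphi_n^*(z):=z^n\overline{\varphi_n(1/\bar z)}$, and hence so does $\varphi_{n+1}$. In particular $\varphi_n(1)\in\mathbb{R}$ and $\varphi_n^*(1)=\overline{\varphi_n(1)}=\varphi_n(1)$. Evaluating \eqref{Szegorecursion} at $z=1$ and using $\varphi_0\equiv 1$ then gives the telescoping identity
\[
\varphi_{n+1}(1)=\varphi_n(1)\,\frac{1-\alpha_n}{\sqrt{1-\alpha_n^2}}=\varphi_n(1)\sqrt{\frac{1-\alpha_n}{1+\alpha_n}},
\qquad\text{so}\qquad
\lvert\varphi_n(1)\rvert^2=\prod_{k=0}^{n-1}\frac{1-\alpha_k}{1+\alpha_k}.
\]
Each factor in this product is strictly positive because $\lvert\alpha_k\rvert<1$, and for $k\ge N_0$ the hypothesis $\alpha_k<0$ forces $\tfrac{1-\alpha_k}{1+\alpha_k}>1$. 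Hence $\lvert\varphi_n(1)\rvert^2\ge c:=\prod_{k=0}^{N_0-1}\tfrac{1-\alpha_k}{1+\alpha_k}>0$ for all $n\ge N_0$, and consequently $\sum_{n=0}^\infty\lvert\varphi_n(1)\rvert^2=\infty$.

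To finish I would invoke the description of the mass points of $\mu$ in terms of the Christoffel function: for any $z_0\in\partial\mathbb{D}$ one has $\mu(\{z_0\})=\bigl(\sum_{n=0}^\infty\lvert\varphi_n(z_0)\rvert^2\bigr)^{-1}$, with the right-hand side read as $0$ when the series diverges \cite{OPUC1}; taking $z_0=1$ then gives $\mu(\{1\})=0$. There is no substantial analytic obstacle here — the only steps requiring care are the bookkeeping with the reversed polynomial $\varphi_n^*$ in the first step and quoting the mass-point criterion in the orthonormal (rather than monic) normalization. One could instead route the argument through the Jacobi side, via the equivalences $\mu(\{1\})>0\iff\gamma(\{2\})>0\iff$ the Dirichlet solution of the Jacobi recurrence at $x=2$ lies in $\ell^2$; but since no monotonicity of $\alpha_n$ is assumed, the machinery of Section~2 does not apply and the direct unit-circle computation above is cleaner.
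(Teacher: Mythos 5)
Your proof is correct and is essentially the paper's own argument: both evaluate the Szeg\H o recursion at $z=1$ (using reality of the $\alpha_n$ to get $\varphi_n^*(1)=\varphi_n(1)$), obtain the product formula $\lvert\varphi_n(1)\rvert^2=\prod_{k=0}^{n-1}\frac{1-\alpha_k}{1+\alpha_k}$, note the terms are bounded away from zero so $(\varphi_n(1))\notin\ell^2$, and conclude via the standard mass-point criterion (the paper cites \cite[Thm.~2.7.3]{OPUC1}, which is the same Christoffel-function fact you quote).
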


\begin{proof}
By induction, using \eqref{Szegorecursion}, $\varphi_n(1)$ is real for all $n$ and
\[
\varphi_n(1) = \prod_{j=0}^{n-1} \sqrt{\frac{1-\alpha_j}{1+\alpha_j}}.
\]
Since $\alpha_n < 0$ for all $n\ge N_0$, it follows that $\varphi_n(1)$ is a positive increasing sequence. 
By \cite[Thm.~2.7.3]{OPUC1}, $(\varphi_n(1))_{n=0}^\infty \not\in \ell^2$ implies $\mu(\{1\})=0$.
\end{proof}

\begin{proof}[Proof of Theorem~\ref{thm5}]
By \eqref{Szegosieving}, $a_n$ is of the form \eqref{anansatz} with leading terms
\[
a_n = 1 - \frac 12 D_1^2 n^{-2\tau_1} + \dots
\]
and $b_n \equiv 0$, so Theorem~\ref{thm2} applies and gives \eqref{asymptotic1000} and $C_1 = D_1^2$, $\beta = 2 \tau_1$. The case $\tau_1 \ge 1$ follows immediately from Theorem~\ref{thm2}(a) and \eqref{Szegosieving2}. Assume from now on that $\tau_1 \in (0,1)$. We wish to plug in
\begin{equation}\label{owgijsd}
x = 2 \cos \frac \theta 2
\end{equation}
and use \eqref{Szegosieving2}, so we note that
\[
2 - x = 2 - 2 \cos \frac \theta 2 =\frac{\theta^2}4 s(\theta)
\]
where
\[
s(\theta) = \frac{ 2 - 2 \cos \frac \theta 2 }{ \frac{\theta^2}4 } = 1 + \sum_{k=2}^\infty \frac{(-1)^{k-1}}{2^{2k-3} (2k)!}\theta^{2k-2}
\]
is an entire function with $s(0)=1$, so
\begin{equation}\label{asymp45}
(2 - x)^{-\kappa} =  \frac{\theta^{-2\kappa}}{2^{-2\kappa}} s(\theta)^{-\kappa}
\end{equation}
where $s(\theta)^{-\kappa}$ has a power series representation around $\theta=0$ with positive radius of convergence for any $\kappa$. Using \eqref{Szegosieving2} and \eqref{asymptotic1000} and keeping all (finitely many) terms with negative powers of $\theta$ gives \eqref{asymptotic10}. The leading term comes from the leading terms of \eqref{asymptotic1000}, \eqref{asymp45}, so it is
\[
(D_1^2)^{\frac 1{2\tau_1}} \frac{\Gamma\left(\frac 1{2\tau_1} - \frac 12\right) \sqrt \pi }{ \Gamma\left(\frac 1{2\tau_1}  \right) }  \left( \frac{\theta^2}4 \right)^{-\frac 1{2\tau_1} + \frac 12}
\]
which implies \eqref{asymp13}.
\end{proof}

\begin{proof}[Proof of Theorem~\ref{thm6}]
By \eqref{Szegosieving},
\begin{align*}
a_n^2  = 1 - D^2 n^{-2\tau} + D \tau n^{-1-\tau} + D^2 \tau n^{-1-2\tau} + o(n^{-1-2\tau})
\end{align*}
and $b_n \equiv 0$. Following verbatim the approach from Section~\ref{sec3} would be impractical: as derived in \eqref{anszegomap}, the formula for $a_n$ involves all powers $n^{-2k\tau}$ with $k = 1, 2, \dots, \lfloor \frac 1{2\tau} +1 \rfloor$, which would make all the following formulas very complicated with arbitrary $\tau$. Instead, we make a modification suited to the form of our Jacobi parameters: we write
\[
a_n = \sqrt{1-\epsilon_n}, \qquad \epsilon_n = D^2 n^{-2\tau} - D \tau n^{-1-\tau} - D^2 \tau n^{-1-2\tau} + o(n^{-1-2\tau}),
\]
and instead of the function $F(z) = \arccosh\left(1+\frac z2\right)$ consider 
\[
G(z) = \arccosh \frac 1{\sqrt{1-z}}.
\]
Let us also define $\epsilon$ by $x = 2 \sqrt{1-\epsilon}$, noting that combining that definition with  \eqref{owgijsd} implies
\[
\epsilon = \sin^2 \frac{\theta}2.
\]
The choice of the function $G(z)$ allows us to write, denoting $a_n^2 = 1 - \epsilon_n$,
\[
\gamma_n(x) = \arccosh\frac{2\sqrt{1-\epsilon}}{2\sqrt{1-\epsilon_n}} = G\left(\frac{\epsilon_n-\epsilon}{1-\epsilon} \right).
\]
It is elementary to verify that $G(z)$ has a simpler representation as
\[
G(z) = \frac 12 \ln \frac{1+\sqrt z}{1-\sqrt z} = \sum_{l=0}^\infty \frac 1{2l+1} z^{l + 1/2}
\]
so, analogously to the analysis in Section~\ref{sec3},
\[
g(x) = \sum_{n=N_0}^{N(x)} G\left(\frac{\epsilon_n-\epsilon}{1-\epsilon} \right) = \sum_{l=0}^{L-1} \frac 1{2l+1} S_l(x)
\]
where $L = \lceil \frac 1{2\tau} - \frac 12 \rceil$ and
\[
S_l(x) = \sum_{n=N_0}^{N(x)} \left(\frac{\epsilon_n-\epsilon}{1-\epsilon} \right)^{l+1/2} = (1-\epsilon)^{-l-1/2} \sum_{n=N_0}^{N(x)} (\epsilon_n-\epsilon)^{l+1/2}
\]
Once again, this is the formula that makes our choice of $G$ useful: the factor independent of $n$ will not present further difficulty, while the remainder is a power sum of $\epsilon_n - \epsilon$, with $\epsilon_n$ which have comparatively few terms in their asymptotic behavior to order $o(n^{-1-\beta})$.

As in the proof of Theorem~\ref{thm3}, the term $o(n^{-1-\tau})$ from $\epsilon_n$ can be removed by an adaptation of Lemma~\ref{lemma32b}, so from now on let us assume that
\[
\epsilon_n = D^2 n^{-2\tau} - D \tau n^{-1-\tau} - D^2 \tau n^{-1-2\tau}.
\]
It follows from Lemma~\ref{lemma32} that there exists $N_1$ such that for $N\ge N_1$, for any $n$ with $N_1 \le n \le N-1$,
\[
\lvert (\epsilon_n - \epsilon_N) - D^2 (n^{-2\tau} - N^{-2\tau}) \rvert < 2 D \tau (n^{-1-\tau} - N^{-1-\tau}) < \frac 12 (n^{-2\tau} - N^{-2\tau})
\]
and we can Taylor expand for $N_1 \le n \le N-1$
\begin{align*}
& (\epsilon_n - \epsilon_N)^{l+1/2}  \\
& = \left(D^2(n^{-2\tau} - N^{-2\tau})\right)^{l+1/2} \left( 1+ \frac{2D\tau(n^{-1-\tau}- N^{-1-\tau})+D^2(n^{-1-2\tau}- N^{-1-2\tau})}{D^2(n^{-2\tau} - N^{-2\tau})} \right)^{l+1/2} \\
& = D^{2l+1} (n^{-2\tau} - N^{-2\tau})^{l+1/2} \left( 1 + O\left(\frac{n^{-1-\tau} - N^{-1-\tau}}{n^{-2\tau} - N^{-2\tau}} \right) \right)
\end{align*}
Considering the sum
\begin{equation}\label{afhgso}
\sum_{n=N_1}^{N-1} (n^{-2\tau} - N^{-2\tau})^{l+1/2} \left(\frac{n^{-1-\tau} - N^{-1-\tau}}{n^{-2\tau} - N^{-2\tau}} \right)
\end{equation}
in the notation of Lemma~\ref{lemma36}, we have
\[
\lambda = 2\tau\left( l+ \frac 12\right) + (1+\tau) - 2\tau = 2 l \tau + 1 \ge 1
\]
so by Lemma~\ref{lemma36}, the sum \eqref{afhgso} is of order $O(\log N)$. Therefore,
\[
\sum_{n=N_0}^{N} (\epsilon_n - \epsilon_N )^{l+1/2}  = D^{2l+1} \sum_{n=N_0}^{N} (n^{-2\tau} - N^{-2\tau})^{l+1/2}  + O(\log N)
\]
Applying Lemma~\ref{lemma36} and using $x^{2\tau}=u$ to reduce the integral to the Beta function,
\begin{align*}
\sum_{n=N_0}^{N} (\epsilon_n - \epsilon_N )^{l+1/2}  & = D^{2l+1} N^{1-(2l+1)\tau} \int_0^1 (x^{-2\tau}-1)^{l+1/2} dx +O(\log N) \\
& = D^{2l+1} N^{1-(2l+1)\tau} \frac 1{2\tau} \int_0^1 u^{\frac 1{2\tau} - l - \frac 32} (1-u)^{l+1/2} du +O(\log N) \\
& = D^{2l+1} N^{1-(2l+1)\tau} \frac {\Gamma\left( \frac 1{2\tau} - l - \frac 12 \right) \Gamma\left( l + \frac 32 \right)}{2\tau \Gamma\left( \frac 1{2\tau} + 1 \right) }  +O(\log N).
\end{align*}
Since
\[
\epsilon_N = D^2 N^{-2\tau} (1+O(N^{-1+\tau})),
\]
for $s > 0$,
\[
\epsilon_N^{-s} = D^{-2s} N^{2s\tau} (1 + O(N^{-1+\tau}))
\]
so if $2 s \tau \le 1 - \tau$ we obtain
\[
\epsilon_N^{-s} = D^{-2s} N^{2s\tau} + O(1)
\]
Solving for $N$ and using $t = 2s\tau$, we see that if $0 < t \le 1 - \tau$, then
\[
N^t = D^{\frac t \tau} \epsilon_N^{- \frac t{2\tau}} + O(1)
\]
and using $(N+1)^{t} - N^{t} = O(1)$ and $\epsilon_{N+1} < \epsilon \le \epsilon_N$ we conclude that also
\[
N^t = D^{\frac t \tau} \epsilon^{- \frac t{2\tau}} + O(1).
\]
Putting this into $S_l(x)$ and combining $S_l(x)$ into $g(x)$ gives
\begin{align*}
g(x) & = \sum_{l=0}^{L-1} \frac {D^{2l+1}}{2l+1}  \frac {\Gamma\left( \frac 1{2\tau} - l - \frac 12 \right) \Gamma\left( l + \frac 32 \right)}{2\tau \Gamma\left( \frac 1{2\tau} + 1 \right) }  \frac{N^{1-(2l+1)\tau}}{ (1-\epsilon)^{l+\frac 12}} + O(\log \epsilon) \\
& = \sum_{l=0}^{L-1} \frac {D^{\frac 1\tau}}{2}  \frac {\Gamma\left( \frac 1{2\tau} - l - \frac 12 \right) \Gamma\left( l + \frac 12 \right)}{ \Gamma\left( \frac 1{2\tau} \right) }  \frac{\epsilon^{-\frac 1{2\tau} + \frac 12 + l}}{ (1-\epsilon)^{l+\frac 12}} + O(\log \epsilon)
\end{align*}
Writing a power series expansion of $(1-\epsilon)^{-l-1/2}$ as
\[
(1-\epsilon)^{-l-\frac 12} = \sum_{m=0}^\infty \binom{-l-\frac 12}m (-\epsilon)^m = \sum_{m=0}^\infty \frac{\Gamma\left(l+m+\frac 12\right)}{m! \Gamma\left( l+ \frac 12\right)} \epsilon^m
\]
we obtain
\[
g(x) = \sum_{l=0}^{L-1} \sum_{m=0}^{L-1-l} \frac {D^{\frac 1\tau}}{2}  \frac {\Gamma\left( \frac 1{2\tau} - l - \frac 12 \right) \Gamma\left( l + m + \frac 12 \right)}{ m! \Gamma\left( \frac 1{2\tau} \right) } \epsilon^{-\frac 1{2\tau} + \frac 12 + l+m} + O(\log \epsilon)
\]
so the result follows by grouping terms by $n=l+m$ since $\epsilon = \sin^2 \frac \theta 2$.
\end{proof}

\bibliographystyle{amsplain}

\begin{thebibliography}{10}


\bibitem {Carmona83} R.~Carmona, \emph{One-dimensional Schr\"odinger operators with random or deterministic potentials: New spectral types}, J. Funct. Anal. \textbf{51} (1983) 229--258.

\bibitem{DenisovKiselevreview}
S.~Denisov, A.~Kiselev, \emph{Spectral properties of Schrödinger operators with decaying potentials, in: Spectral theory and mathematical physics: a Festschrift in honor of Barry Simon's 60th birthday}, 565--589,
Proc. Sympos. Pure Math., 76, Part 2, Amer. Math. Soc., Providence, RI, 2007.

\bibitem{GilbertPearson87}
D.~J.~Gilbert, D.~B.~Pearson, \emph{On subordinacy and analysis of the spectrum of one-dimensional
              {S}chr\"odinger operators}, {J. Math. Anal. Appl.}, \textbf{128} (1987) 30--56.

\bibitem{Killipreview}
R.~Killip, \emph{Spectral theory via sum rule, in: Spectral theory and mathematical physics: a Festschrift in honor of Barry Simon's 60th birthday}, 907--930,
Proc. Sympos. Pure Math., 76, Part 2, Amer. Math. Soc., Providence, RI, 2007. 

\bibitem{Kirsch08}
W.~Kirsch, \emph{An invitation to random {S}chr\"odinger operators, in: Random {S}chr\"odinger operators}, vol. 25, 2008, 1--119.

\bibitem{KreimerLastSimon09}
Y.~Kreimer, Y.~Last, B.~Simon, \emph{Monotone Jacobi parameters and non-Szeg\H o weights}, J. Approx.\ Theory, \textbf{157} (2009), 144--171.

\bibitem{KrutikovRemling01} D.~Krutikov, C.~Remling, \emph{Schr\"odinger operators with sparse potentials: Asymptotics of the Fourier transform of the spectral measure}, Comm. Math. Phys. \textbf{223} (2001) 509--532.

\bibitem{LastSimon99}
Y.~Last, B.~Simon, \emph{Eigenfunctions, transfer matrices, and absolutely continuous spectrum of one-dimensional Schr\"odinger operators}, Invent. Math. \textbf{135} (1999) 329--367.

\bibitem{Lukic16}
M.~Lukic, \emph{On higher-order Szeg\H o theorems with a single critical point of arbitrary order}, Constr. Approx. \textbf{44} (2016) 283--296.

\bibitem{MateNevai83}
A.~M\'at\'e and P.~Nevai, \emph{Orthogonal polynomials and absolutely continuous measures}, Approximation theory, IV (College Station, Tex., 1983), Academic Press, New York, 1983, pp. 611--617.

\bibitem{Pollaczek1} F.~Pollaczek, \emph{Sur une g\'en\'eralisation des polynomes de Legendre}, C. R. Acad. Sci. Paris \textbf{228} (1949) 1363--1365.

\bibitem{Pollaczek2} F.~Pollaczek, \emph{Familles de polynomes orthogonaux}, C. R. Acad. Sci. Paris \textbf{230} (1950) 36--37.

\bibitem{Pollaczek3} F.~Pollaczek, \emph{Sur une g\'en\'eralisation des polynomes de Jacobi}, in: M\'emor. Sci. Math., vol. 131, Gauthier-Villars, Paris, 1956.

\bibitem{Simonequilibrium}
B.~Simon, \emph{Equilibrium measures and capacities in spectral theory}, Inverse Problems and Imaging \textbf{1} (2007), 713--772.

\bibitem{OPUC1}
B.~Simon, \emph{Orthogonal polynomials on the unit circle. {P}art 1},
  American Mathematical Society Colloquium Publications, vol.~54, American
  Mathematical Society, Providence, RI, 2005, Classical theory.

\bibitem{OPUC2}
\bysame, \emph{Orthogonal polynomials on the unit circle. {P}art 2}, American
  Mathematical Society Colloquium Publications, vol.~54, American Mathematical
  Society, Providence, RI, 2005, Spectral theory.

\bibitem{Simon07}
B.~Simon, \emph{Orthogonal polynomials with exponentially decaying recursion coefficients, in: D. Dawson, V. Jaksic, B. Vainberg (Eds.), Probability and Mathematical Physics, in: CRM Proc. and Lecture Notes}, vol. 42, 2007, 453--463.

\bibitem{SimonRice}
B.~Simon, \emph{Szeg{\H o}'s theorem and its descendants. {S}pectral theory for
  ${L{^{2}}}$ perturbations of orthogonal polynomials}, M. B. Porter Lectures,
  Princeton University Press, Princeton, NJ, 2011.

\bibitem{Simonov12}
S.~Simonov, \emph{Zeroes of the spectral density of discrete Schr\"odinger operator with Wigner-von Neumann potential},
Integral Equations Operator Theory \textbf{73} (2012), 351--364.

\bibitem{Simonov16}
S.~Simonov, \emph{Zeroes of the spectral density of the Schr\"odinger operator with the slowly decaying Wigner--von Neumann potential}, Math. Z. \textbf{284} (2016), no. 1-2, 335--411. 

\bibitem{Teschl}
G.~Teschl, \emph{Jacobi operators and completely integrable nonlinear lattices},
Mathematical Surveys and Monographs, 72. American Mathematical Society, Providence, RI, 2000. 

\bibitem{Weidmann67}
J.~Weidmann, \emph{Zur Spektraltheorie von Sturm--Liouville-Operatoren}, Math.\ Z.\ \textbf{98} (1967), 268--302.

\end{thebibliography}

\providecommand{\bysame}{\leavevmode\hbox to3em{\hrulefill}\thinspace}
\providecommand{\MR}{\relax\ifhmode\unskip\space\fi MR }
\providecommand{\MRhref}[2]{%
  \href{http://www.ams.org/mathscinet-getitem?mr=#1}{#2}
}
\providecommand{\href}[2]{#2}

\end{document}